\documentclass[12pt]{article}
\usepackage{graphicx}
\usepackage{amsmath, amssymb, amsthm}
\usepackage{mathrsfs}
\usepackage{geometry}
\usepackage{cite}
\usepackage[colorlinks=true]{hyperref}
\usepackage{authblk}
\usepackage{setspace}

\title{The $K_+$-fixed vectors of Iwahori-spherical $GL_n$-representations: connections with Zelevinsky's segments}
\author{Runze Wang}
\date{Peking University, Beijing, China; e-mail: runze\_wang@stu.pku.edu.cn}

\newtheorem{theorem}{Theorem}[section]
\newtheorem{lemma}[theorem]{Lemma}
\newtheorem{corollary}[theorem]{Corollary}
\newtheorem{definition}[theorem]{Definition}

\begin{document}

\maketitle

\begin{abstract}
Let $G = GL_n(F)$ with $F$ a non-archimedean local field of characteristic $0$, and let $K$ be a hyperspecial subgroup of $G$ with pro-unipotent radical $K_+$. Denote by $I$ the standard Iwahori subgroup of $G$. In this paper we study the decomposition of the $K_+$-fixed subspace $V^{K_+}$ into irreducible $K/K_+$-modules, where $V$ is an Iwahori-spherical representation of $G$ (i.e., $V^I \neq 0$).

For a generic (not necessarily irreducible) Iwahori-spherical representation $V$ of the form $V = St(\langle\Delta_1\rangle) \times \cdots \times St(\langle\Delta_r\rangle)$, where each $\Delta_i$ is a segment consisting of unramified characters, we prove that there exists a partition $\lambda = (n_1,\dots,n_r)'$ (the conjugate of the lengths of the segments) such that an irreducible $K/K_+$-module $\pi_\mu$ appears in $V^{K_+}$ if and only if $\mu \trianglelefteq \lambda$ (dominance order). Moreover, the multiplicity of $\pi_\mu$ is given by the Kostka number $K_{\mu'\lambda'}$.

For an arbitrary irreducible Iwahori-spherical representation $St(\langle a\rangle)$ (Zelevinsky dual of $\langle a\rangle$) associated to a multiset $a$ of segments, we associate a partition $P(a)$ defined by the conjugate of the lengths of the segments. We show that if $\pi_\mu$ occurs in $St(\langle a\rangle)^{K_+}$, then necessarily $\mu \trianglelefteq P(a)$. And the multiplicity of $\pi_{P(a)}$ is exactly $1$. Furthermore, we provide a combinatorial algorithm, based on Zelevinsky's elementary operations and the decomposition numbers $m(b;a)$, to compute the multiplicities of each $\pi_\mu$ in $St(\langle a\rangle)^{K_+}$.
\end{abstract}

\noindent\textbf{Keywords:} $p$-adic groups, Iwahori-spherical representations, branching law, hyperspecial subgroup, Zelevinsky segments, Kostka numbers, Hecke algebras.
\section{Introduction}
Branching laws, which describe the decomposition of a representation of a large group when restricted to a smaller subgroup, are a central theme in representation theory. Classical examples include the restriction from $GL_n$ to $GL_{n-1}$,  Bessel models, Jacobi models, and various other settings. However, the case where the smaller subgroup is a \emph{hyperspecial} subgroup $K$ (or its pro-unipotent radical $K_+$) has received far less attention, even for the depth-zero part, i.e., the space of $K_+$-fixed vectors $V^{K_+}$. For a representation $V$ of $GL_n(F)$ (where $F$ is a non-archimedean local field of characteristic $0$), the $K_+$-fixed subspace $V^{K_+}$ naturally carries an action of the finite Lie group $K/K_+ \cong GL_n(\mathbb{F}_q)$, where $\mathbb{F}_q$ is the residue field. Understanding the decomposition of $V^{K_+}$ into irreducible $K/K_+$-modules is therefore a natural branching problem, yet systematic results are sparse.

In this paper we study this problem for a large family of representations of $GL_n(F)$, namely those that are \emph{Iwahori-spherical} (i.e., possess a non-zero vector fixed by the standard Iwahori subgroup $I$). Such representations are known to be parameterized by combinatorial data via Zelevinsky's theory of segments \cite{Zelevinsky1980}. More precisely, let $\mathcal{C}$ be the set of irreducible cuspidal representations of the groups $GL_m(F)$ ($m\ge 1$), and let $\nu(g)=|\det g|$. A \emph{segment} $\Delta = [\rho,\nu^k\rho]$ is a set $\{\rho,\nu\rho,\dots,\nu^k\rho\}$. Zelevinsky associates to every segment $\Delta$ an irreducible representation $\langle\Delta\rangle$ (the unique irreducible submodule of the product $\rho\times\nu\rho\times\cdots\times\nu^k\rho$) and also a distinguished irreducible quotient $St(\langle\Delta\rangle)$, called the Zelevinsky dual. For a multiset of segments $a = \{\Delta_1,\dots,\Delta_r\}$, one forms the product $\langle\Delta_1\rangle\times\cdots\times\langle\Delta_r\rangle$; it has a unique irreducible submodule denoted $\langle a\rangle$. Moreover, every irreducible representation of $GL_n(F)$ is isomorphic to $\langle a\rangle$ for some multiset $a$ of segments whose total length sums to $n$. The Zelevinsky involution $St$ sends $\langle\Delta\rangle$ to $St(\langle\Delta\rangle)$ and extends to an involutive automorphism of the Grothendieck ring; in particular, every irreducible representation can also be written uniquely as $St(\langle a\rangle)$ for some $a$.

Our main object of study is the space of $K_+$-fixed vectors of the representation $St(\langle a\rangle)$, where the segments are assumed to consist of \emph{unramified characters} (so that the representation is Iwahori-spherical). For such representations, we prove a decomposition theorem for $St(\langle a\rangle)^{K_+}$ as a module over $K/K_+\cong GL_n(\mathbb{F}_q)$. The decomposition is governed by the dominance order on partitions and the Kostka numbers.

Let us first recall some known facts about the $K_+$-fixed subspace of Iwahori-spherical representations. In the author's previous work \cite{Wang2026}, it was shown that for any Iwahori-spherical representation $V$ of $GL_n(F)$, every irreducible $K/K_+$-subrepresentation of $V^{K_+}$ lies in the principal Harish-Chandra series of $K/K_+$, i.e., it appears as a composition factor of $\operatorname{Ind}_{I/K_+}^{K/K_+}(\mathbf{1})$. It is a classical fact that the irreducible representations in this principal series are parameterized by the irreducible representations of the Weyl group of $GL_n(F)$, which is the symmetric group $S_n$ (see e.g. \cite{GeckPfeiffer2000}). The irreducible representations of $S_n$ are in one-to-one correspondence with partitions $\lambda$ of $n$. We denote by $\pi_\lambda$ the irreducible $K/K_+$-module attached to the partition $\lambda$ under this correspondence.

Now consider the depth-zero part of an irreducible Iwahori-spherical representation $V$. A natural question, raised by Prasad in \cite{Prasad2025} (Question 2), asks for a classification of all irreducible Iwahori-spherical representations $V$ of $GL_n(F)$ such that a given $\pi_\lambda$ occurs in $V^{K_+}$, and he expects the answer to be expressed in terms of Zelevinsky's segment classification. More precisely, for an irreducible Iwahori-spherical representation $St(\langle a\rangle)$ (or equivalently $\langle a\rangle$), when does $\pi_\lambda$ appear in $St(\langle a\rangle)^{K_+}$? And how can the multiplicity be described using the combinatorial data of the multiset $a$?

In this paper we give a complete answer to Prasad's question for $GL_n$. For a multiset $a$ of segments (with unramified cuspidal supports), we define a partition $P(a)$ as the conjugate of the partition formed by the lengths of the segments. We prove:

\begin{theorem}
Let $V = St(\langle\Delta_1\rangle)\times \cdots \times St(\langle\Delta_r\rangle)$ be a generic Iwahori-spherical representation with segment lengths $n_1\ge\cdots\ge n_r$ and set $\lambda = (n_1,\dots,n_r)'$. Then
$$
V^{K_+} \cong \bigoplus_{\mu \trianglelefteq \lambda} K_{\mu'\lambda'}\,\pi_\mu,
$$
where $K_{\mu'\lambda'}$ is the Kostka number and $\mu\trianglelefteq\lambda$ denotes the dominance order. In particular, $\pi_\mu$ occurs in $V^{K_+}$ if and only if $\mu\trianglelefteq\lambda$.
\end{theorem}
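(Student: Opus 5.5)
Since the Jacquet functor is exact and the functor $V\mapsto V^{K_+}$ is exact on smooth representations, I will first reduce the computation to the level of Grothendieck groups and then to the finite group $GL_n(\mathbb{F}_q)$. The basic tool is the standard compatibility of $K_+$-invariants with parabolic induction, via the Iwasawa decomposition $G=PK$. If $P=MN$ is a standard parabolic subgroup with Levi $M=GL_{n_1}\times\cdots\times GL_{n_r}$ and $\sigma$ is a smooth representation of $M$, then
$$
(\operatorname{Ind}_P^G\sigma)^{K_+}\cong \operatorname{Ind}_{\bar P}^{GL_n(\mathbb{F}_q)}\bigl(\sigma^{M\cap K_+}\bigr)
$$
as $K/K_+$-modules. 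Here $\bar P$ is the image of $P\cap K$ in $K/K_+$, and the right-hand side is Harish-Chandra induction from the finite Levi $\prod GL_{n_i}(\mathbb{F}_q)$. I will prove this by restricting functions to $K$: the restrictions are functions on $(P\cap K)\backslash K$, and $K_+$-invariance forces them to factor through $K/K_+$ with values in $\sigma^{M\cap K_+}$. The point is that $K_+\cap P$ surjects onto $(M\cap K_+)(N\cap K)$ modulo $N$, so invariance under $N\cap K$ is automatic after inflation.

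With this in hand, I treat a single segment. For $\Delta_i$ of length $n_i$ made of unramified characters, $St(\langle\Delta_i\rangle)$ is an unramified twist of the Steinberg representation of $GL_{n_i}(F)$. Its $K_+$-fixed vectors are known to be the Steinberg representation $St_{n_i}$ of $GL_{n_i}(\mathbb{F}_q)$: the space of Iwahori-fixed vectors is the one-dimensional sign character of the finite Hecke algebra, and by the earlier result of \cite{Wang2026} everything lies in the principal series. Alternatively, one can use the resolution of $St$ as an alternating sum of $\operatorname{Ind}_P^G\mathbf 1$ together with the compatibility above. Under the parametrization $\lambda\mapsto\pi_\lambda$ (through $S_n$-representations), $St_{n_i}$ corresponds to the sign character, i.e. to $\pi_{(1^{n_i})}$ in the normalization where $\pi_{(n)}$ is trivial. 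This normalization must be checked against the paper's convention, since it is precisely what makes $\lambda=(n_1,\dots,n_r)'$ appear.

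Applying the compatibility to $V=\operatorname{Ind}_P^G(St(\langle\Delta_1\rangle)\otimes\cdots\otimes St(\langle\Delta_r\rangle))$ gives $V^{K_+}\cong R_{\bar L}^{G}(St_{n_1}\otimes\cdots\otimes St_{n_r})$. By Howlett--Lehrer comparison, Harish-Chandra induction within the principal series corresponds to induction of Weyl group modules. The $S_n$-module is therefore $\operatorname{Ind}_{S_{n_1}\times\cdots\times S_{n_r}}^{S_n}(\mathrm{sgn})$. Since $\mathrm{sgn}\otimes\operatorname{Ind}(\mathbf 1)=\operatorname{Ind}(\mathrm{sgn})$, Young's rule gives
$$
\operatorname{Ind}_{S_{n_1}\times\cdots\times S_{n_r}}^{S_n}\mathrm{sgn}\cong\bigoplus_{\nu}K_{\nu,(n_1,\dots,n_r)}\,S^{\nu'} .
$$
Setting $\mu=\nu'$ and $\lambda'=(n_1,\dots,n_r)$, the multiplicity of $\pi_\mu$ is $K_{\mu'\lambda'}$. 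Finally, $K_{\mu'\lambda'}\neq 0$ iff $\mu'\trianglerighteq\lambda'$, iff $\mu\trianglelefteq\lambda$, which gives the stated support.

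I expect the main obstacle to be the compatibility isomorphism, in particular the careful verification that $K_+\cap P$ has the right image. I will also need to check that the identification of $\pi_\lambda$ with Weyl group characters is compatible with Harish-Chandra induction in the sense used to label the single-segment case. For the first point I will use the Iwahori factorization $K_+=(K_+\cap\bar N)(K_+\cap M)(K_+\cap N)$. For the second, I will use Howlett--Lehrer/Curtis theory, normalized so that the induction of trivial characters matches Young's rule.
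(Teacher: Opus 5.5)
Your proposal is correct and has the same skeleton as the paper's proof. First, the compatibility of $K_+$-invariants with parabolic induction: this is the paper's Theorem 4.1, which it simply cites from Mishra--R\"osner. The verification you worry about is routine, since the image of $P\cap K_+$ in $M$ is visibly $M\cap K_+$. Then $St(\langle\Delta_i\rangle)^{K_+}\cong St_{n_i}$, then a transfer from Harish-Chandra induction to induction from $S_{n_1}\times\cdots\times S_{n_r}$, and finally Young's rule twisted by $\mathrm{sgn}$, which is the same computation as the iterated Pieri rule behind Lemma 2.8.

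The differences lie in two inputs.

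\emph{The transfer step.} You cite the Howlett--Lehrer comparison theorem. The paper instead proves its Key Lemma 4.2, an explicit $H_n$-module isomorphism $\operatorname{Ind}_P^{GL_n}(\rho_1\otimes\cdots\otimes\rho_r)^B\cong H_n\otimes_{H_M}(\rho_1^{B_1}\otimes\cdots\otimes\rho_r^{B_r})$, obtained by a BN-pair computation with Deodhar's lemma. It then specializes $q\mapsto 1$ via Theorem 2.14. This combination is a special case of Howlett--Lehrer (principal series, type $A$). The paper's route is therefore self-contained and gives the Hecke-module structure, not just multiplicities. Yours is shorter but must fix the labelling. Your Young's-rule normalization does pin it down, because the Kostka matrix is unitriangular. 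It agrees with the paper's specialization labelling, under which $St_{n_i}=\pi_{(1^{n_i})}$.

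\emph{The single-segment step.} The paper combines three ingredients: $\dim St(\langle\Delta\rangle)^I=1$ (via Casselman), the fact that $St(\langle\Delta\rangle)^{K_+}$ contains the Steinberg (Chan--Savin), and the principal-series result Theorem 4.3. Your alternative writes the Steinberg representation as $\sum_J(-1)^{|J|}C^\infty(P_J\backslash G)$ in the Grothendieck group; the unramified twist is trivial on $K$. You then use exactness of $K_+$-invariants, the compatibility with $\sigma=\mathbf{1}$, and Solomon's alternating-sum formula for the finite Steinberg. This avoids both Chan--Savin and Theorem 4.3. Moreover, $V^{K_+}$ is then visibly Harish-Chandra induced from principal-series modules. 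So that version of your argument never needs the principal-series theorem for $V$, which the paper invokes to reduce to $V^I$.
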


For an arbitrary irreducible Iwahori-spherical representation $St(\langle a\rangle)$ (or equivalently $\langle a\rangle$ after applying Zelevinsky duality), we associate the partition $P(a)$. We show:

\begin{theorem}
If $\pi_\mu$ occurs in $St(\langle a\rangle)^{K_+}$, then necessarily $\mu \trianglelefteq P(a)$. And the multiplicity of $\pi_{P(a)}$ is exactly $1$. Moreover, we provide a combinatorial algorithm, based on Zelevinsky's elementary operations and the decomposition numbers $m(b;a)$, to compute the multiplicities of each $\pi_\mu$ in $St(\langle a\rangle)^{K_+}$.
\end{theorem}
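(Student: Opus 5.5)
The approach is to reduce the irreducible case to the generic (standard module) case of the previous theorem, using Zelevinsky's triangularity in the Grothendieck group and the exactness of $K_+$-invariants.

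\textbf{Step 1: exactness.} The functor $V\mapsto V^{K_+}$ is exact on smooth representations, because $K_+$ is pro-$p$ and invariants are cut out by an idempotent. It therefore factors through the Grothendieck group and sends a class $[V]$ to a virtual $GL_n(\mathbb{F}_q)$-character.

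\textbf{Step 2: expressing $St(\langle a\rangle)$ by standard modules.} By Zelevinsky, in the Grothendieck group
\[
[\langle\Delta_1\rangle\times\cdots\times\langle\Delta_r\rangle]=\sum_{b\le a} m(b;a)\,[\langle b\rangle],\qquad m(a;a)=1,
\]
where $b\le a$ means $b$ is obtained from $a$ by elementary operations. Applying the involution $St$, which is a ring automorphism, gives
\[
[St(\langle\Delta_1\rangle)\times\cdots]=\sum_{b\le a} m(b;a)\,[St(\langle b\rangle)].
\]
The matrix $m$ is unitriangular, so it can be inverted:
\[
[St(\langle a\rangle)]=\sum_{b\le a}\tilde m(b;a)\,[V_b],
\]
with $\tilde m(a;a)=1$ and $V_b=St(\langle\Delta^b_1\rangle)\times\cdots$ the generic product attached to $b$.

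Combining this with Step 1 and the previous theorem gives the multiplicity formula
\[
\bigl[St(\langle a\rangle)^{K_+}:\pi_\mu\bigr]=\sum_{b\le a}\tilde m(b;a)\,K_{\mu'P(b)'}.
\]
This is the combinatorial algorithm asserted in the statement. One can compute either by inverting $m(b;a)$ directly, or recursively from the top of the order.

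\textbf{Step 3: the key combinatorial point.} An elementary operation replaces two linked segments $\Delta,\Delta'$ by $\Delta\cup\Delta'$ and $\Delta\cap\Delta'$. The multiset of lengths changes from $\{\ell,\ell'\}$ to $\{\ell+\ell'-k,k\}$ with $k<\min(\ell,\ell')$. The new length partition dominates the old one, so its conjugate is dominated: $b<a$ implies $P(b)\trianglelefteq P(a)$.

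The same fact gives the inclusion needed for the vanishing bound. In the previous theorem $K_{\mu'\lambda'}\neq0$ iff $\mu\trianglelefteq\lambda$, and dominance is transitive. Hence every $\pi_\mu$ appearing in any $V_b$ satisfies $\mu\trianglelefteq P(b)\trianglelefteq P(a)$. Since $St(\langle a\rangle)^{K_+}$ is an honest module occurring in $V_a^{K_+}$, its constituents are among those of $V_a^{K_+}$. This gives the constraint $\mu\trianglelefteq P(a)$ directly, with no need to control signs.

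\textbf{Step 4: multiplicity one for $\pi_{P(a)}$.} In the formula of Step 2, the term $b=a$ contributes $K_{P(a)'P(a)'}=1$. The remaining task, which I expect to be the main obstacle, is to show that every $b<a$ contributes $0$, i.e. that $P(b)\neq P(a)$ strictly. This requires that an elementary operation always changes the length multiset, which holds because $\{\ell+\ell'-k,k\}\neq\{\ell,\ell'\}$ when $0\le k<\min(\ell,\ell')$. Strict dominance $P(b)\triangleleft P(a)$ then forces $K_{P(a)'P(b)'}=0$, because $K_{\mu'\lambda'}=0$ unless $\mu\trianglelefteq\lambda$. One must also handle the case $k=0$ carefully: the intersection is empty there and one segment disappears, but the argument is unchanged.

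Finally, since $\pi_{P(a)}$ occurs in $V_a^{K_+}$ with multiplicity $K_{P(a)'P(a)'}=1$ and in no other $V_b^{K_+}$, its multiplicity in $St(\langle a\rangle)^{K_+}$ is exactly $1$.
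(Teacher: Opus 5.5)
Your proposal is correct and follows the paper's route. Like the paper, you apply $St$ to Zelevinsky's unitriangular expansion with coefficients $m(b;a)$ and invert it (the paper's Lemma 6.1), then apply the generic decomposition theorem (Theorem 5.2) to each $St(\pi(b))$, using $P(b)\trianglelefteq P(a)$ for $b\le a$; your length computation $\{\ell,\ell'\}\mapsto\{\ell+\ell'-k,k\}$ also supplies the strict inequality $P(b)\triangleleft P(a)$ for $b<a$, which the paper leaves as ``not hard to see'' but which the multiplicity-one claim for $\pi_{P(a)}$ needs.
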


Thus we obtain both a necessary condition and an explicit (though inductive) method to determine the full branching law for these representations.

The paper is organized as follows. In Section 2 we recall the necessary background on partitions, Kostka numbers, distinguished coset representatives, Hecke algebras, and the action of the Hecke algebra on $B$-fixed vectors. Section 3 reviews Zelevinsky's theory of segments and the classification of irreducible representations of $GL_n(F)$. In Section 4 we prove a key technical lemma that identifies the $B$-fixed vectors of a parabolically induced representation with a tensor product over the Hecke algebra. Section 5 applies this lemma to the specific case of Steinberg duals of segments and establishes the decomposition for generic Iwahori-spherical representations. Finally, Section 6 extends the results to arbitrary irreducible Iwahori-spherical representations and describes the combinatorial algorithm. To make the exposition self-contained, we have included an appendix in which we prove some relevant facts concerning the principal series and Iwahori-spherical representations.

\section{Preliminary}

\subsection{Partition}
In this subsection, we just recall some basic definitions and lemmas.
\begin{definition}[Partition]
A \textbf{partition} $\lambda = (\lambda_1, \lambda_2, \lambda_3, \dots)$ of $n$ is a sequence of non‑negative integers such that
$$
\lambda_1 \ge \lambda_2 \ge \lambda_3 \ge \cdots \quad \text{and} \quad \sum_{i=1}^{\infty} \lambda_i = n.
$$
\end{definition}

\begin{definition}[Dominance order]
If $\lambda$ and $\mu$ are partitions of $n$, we say that $\lambda$ \textbf{dominates} $\mu$, and write $\lambda \trianglerighteq \mu$, provided that
$$
\lambda_1 + \cdots + \lambda_i \ge \mu_1 + \cdots + \mu_i \qquad \text{for all } i \ge 1.
$$
If $\lambda \trianglerighteq \mu$ and $\lambda \neq \mu$, we write $\lambda \triangleright \mu$.
\end{definition}

\begin{definition}[Young diagram and $\lambda$-tableau]
If $\lambda$ is a partition of $n$, its \textbf{diagram} $[\lambda]$ is the set of nodes
$$
[\lambda] = \{ (i,j) \mid i,j \in \mathbb{Z},\; 1 \le i,\; 1 \le j \le \lambda_i \}.
$$
A \textbf{$\lambda$-tableau} is one of the $n!$ arrays of integers obtained by replacing each node in $[\lambda]$ by one of the integers $1,2,\dots,n$ without repetition.
\end{definition}

\begin{definition}[Conjugate partition]
If $[\lambda]$ is a diagram, the \textbf{conjugate diagram} $[\lambda']$ is obtained by interchanging the rows and columns in $[\lambda]$. $\lambda'$ is the partition of $n$ conjugate to $\lambda$.
\end{definition}

\begin{lemma}[Basic combinatorial lemma]
Let $\lambda$ and $\mu$ be partitions of $n$, and suppose that $t_1$ is a $\lambda$-tableau and $t_2$ is a $\mu$-tableau. Suppose that for every $i$ the numbers from the $i$-th row of $t_2$ belong to different columns of $t_1$. Then $\lambda \trianglerighteq \mu$.
\end{lemma}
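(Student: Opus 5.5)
The plan is the classical argument from James's treatment of Specht modules: compare the row sums of $t_2$ with the row structure of $\lambda$ by moving entries of $t_1$ around. The key observation is that the hypothesis still holds if we permute entries of $t_1$ within its columns. Such a permutation does not change which column any integer lies in, and the condition only involves columns of $t_1$.

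We will use this freedom to rearrange $t_1$ into a tableau $t_1^*$ with the following property. For each $i$, every number appearing in rows $1,\dots,i$ of $t_2$ lies in rows $1,\dots,i$ of $t_1^*$. Granting this, the conclusion is immediate. Rows $1,\dots,i$ of $t_2$ contain $\mu_1+\cdots+\mu_i$ distinct numbers. All of them sit in the first $i$ rows of $t_1^*$, which have $\lambda_1+\cdots+\lambda_i$ nodes. Hence $\lambda_1+\cdots+\lambda_i\ge \mu_1+\cdots+\mu_i$ for all $i$, that is, $\lambda\trianglerighteq\mu$.

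To build $t_1^*$ we proceed row by row over $t_2$.

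First, the numbers in row $1$ of $t_2$ lie in distinct columns of $t_1$. Within each such column, we move the relevant number to the top node.

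Inductively, suppose that after handling rows $1,\dots,i-1$, each column of $t_1$ has the numbers from rows $1,\dots,i-1$ of $t_2$ occupying an initial segment of that column. Now take a number $x$ in row $i$ of $t_2$, and let $c$ be its column in $t_1$. Since row $i$ of $t_2$ meets column $c$ at most once, $x$ is the only number of row $i$ in that column. We move $x$, by a swap within column $c$, to the first node just below that initial segment. Such a node exists, because $x$ itself is in the column and lies outside the segment. Doing this for every $x$ in row $i$ preserves the inductive hypothesis with $i$ in place of $i-1$, since distinct $x$ lie in distinct columns.

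It remains to check the property. In each column, the numbers from rows $1,\dots,i$ of $t_2$ occupy an initial segment. Within a single column, at most one number comes from each row of $t_2$, so that segment has length at most $i$. Therefore these numbers lie in rows $1,\dots,i$ of $t_1^*$, as required.

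The only delicate point is this last bookkeeping step: showing that the initial segment in each column has length at most $i$, which relies on the hypothesis being used once per row of $t_2$. We also need the segments to sit at the tops of columns of a Young diagram, which holds because columns of $[\lambda]$ are top-justified. Everything else is counting.
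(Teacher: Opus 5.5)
Your proof is correct. The paper itself gives no proof of this lemma; it quotes it as a standard fact, James's basic combinatorial lemma from the theory of Specht modules. Your column-rearrangement argument is exactly the classical proof.

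One point you leave implicit does hold: entries from rows $1,\dots,i-1$ of $t_2$ are never moved at later stages, because each swap partner occupies the node just below the current initial segment. So a single $t_1^*$ works for all $i$ simultaneously.

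Alternatively, the same count needs no rearranging at all. Column $j$ of $t_1$ contains at most $\min(i,\lambda'_j)$ numbers from rows $1,\dots,i$ of $t_2$, and $\sum_j \min(i,\lambda'_j)=\lambda_1+\cdots+\lambda_i$.
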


\begin{lemma}[Duality of dominance order]
For partitions $\lambda$ and $\mu$ of $n$, we have
$$
\lambda \trianglerighteq \mu \quad \text{if and only if} \quad \mu' \trianglerighteq \lambda'.
$$
\end{lemma}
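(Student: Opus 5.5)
We prove the forward implication by comparing partial sums, using only the definition of conjugate partitions. We read conjugation through column lengths: for any partition $\nu$ and any $k\ge 1$,
$$
\nu'_1+\cdots+\nu'_k=\sum_{i}\min(\nu_i,k),
$$
since $\nu'_j=\#\{i:\nu_i\ge j\}$ and summing over $j\le k$ counts, for each row $i$, the $\min(\nu_i,k)$ nodes of that row lying in the first $k$ columns.

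\textbf{Forward direction.} Assume $\lambda\trianglerighteq\mu$, and suppose for contradiction that $\mu'\not\trianglerighteq\lambda'$. Choose the smallest $k$ with
$$
\mu'_1+\cdots+\mu'_k<\lambda'_1+\cdots+\lambda'_k .
$$
Then $\mu'_k<\lambda'_k$; otherwise the inequality would already fail at $k-1$, or at $k=1$ directly. Set $m=\lambda'_k$, so $\lambda_m\ge k$, and note $\mu'_k<m$ gives $\mu_m<k$. The nodes of $[\lambda]$ in rows $1,\dots,m$ number $\lambda_1+\cdots+\lambda_m$. Counting these nodes by column, they equal the nodes of $[\lambda]$ in the first $k$ columns plus those in rows $\le m$ lying in columns $>k$:
$$
\lambda_1+\cdots+\lambda_m=(\lambda'_1+\cdots+\lambda'_k)+\sum_{i\le m}(\lambda_i-k).
$$
Here every term is nonnegative, since $\lambda_i\ge\lambda_m\ge k$ for $i\le m$, and all nodes of $[\lambda]$ in the first $k$ columns lie in rows $\le m$.

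For $\mu$ we have
$$
\mu_1+\cdots+\mu_m\le(\mu'_1+\cdots+\mu'_k)+\sum_{i\le m}\max(\mu_i-k,0).
$$
The rows $i\le m$ with $\mu_i>k$ form an initial segment, and for those rows $\mu_i-k\le\lambda_i-k$ must be compared. The cleanest route is the bound $\sum_{i\le m}\max(\mu_i-k,0)\le\sum_{i\le m}(\lambda_i-k)$. This follows from $\lambda\trianglerighteq\mu$ applied at index $j$, where $j$ is the number of rows with $\mu_i>k$; note $j<m$ because $\mu_m<k$. The remaining terms $\lambda_i-k$ for $j<i\le m$ are nonnegative, so they only strengthen the bound. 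Combining, $\mu_1+\cdots+\mu_m<\lambda_1+\cdots+\lambda_m$, which does not yet contradict $\lambda\trianglerighteq\mu$.

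This bookkeeping step is the main obstacle, and we expect it to be delicate. To handle it, we restate everything via $\min$: the hypothesis $\lambda\trianglerighteq\mu$ for all partial sums is equivalent to
$$
\sum_i\max(\lambda_i-k,0)\ge\sum_i\max(\mu_i-k,0)\quad\text{for all }k.
$$
This equivalence holds because partial sums of a decreasing sequence maximize sums over $j$-subsets, and $\sum_i\max(\nu_i-k,0)=\max_j\bigl(\nu_1+\cdots+\nu_j-jk\bigr)$. Since both partitions have total $n$, subtracting from $n$ gives
$$
\sum_i\min(\lambda_i,k)\le\sum_i\min(\mu_i,k),
$$
that is, $\lambda'_1+\cdots+\lambda'_k\le\mu'_1+\cdots+\mu'_k$, which is exactly $\mu'\trianglerighteq\lambda'$. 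This bypasses the contradiction argument entirely, so we present the proof in this form.

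\textbf{Converse.} The reverse implication follows by applying the forward direction to the pair $(\mu',\lambda')$ and using $\lambda''=\lambda$.
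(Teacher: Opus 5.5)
The paper states this lemma without proof, as a standard fact about partitions, so there is no argument in the paper to compare against. The proof you actually present, via the excess function, is correct and self-contained. Its key ingredient is the identity
\[
\sum_i \max(\nu_i-k,0)\;=\;\sum_{j>k}\nu'_j\;=\;\max_{j\ge 0}\bigl(\nu_1+\cdots+\nu_j-jk\bigr).
\]
In words: the number of nodes of $[\nu]$ strictly to the right of column $k$ is a maximum over $j$ of the quantities $\nu_1+\cdots+\nu_j-jk$.

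From there the argument runs as follows. The hypothesis $\lambda\trianglerighteq\mu$ gives the inequality term by term in $j$. Complementing in $n$ (both diagrams have $n$ nodes) gives $\lambda'_1+\cdots+\lambda'_k\le\mu'_1+\cdots+\mu'_k$ for every $k\ge 1$. The converse follows from $\nu''=\nu$. Compared with the usual textbook route (choose a minimal failing $k$ and count nodes), this removes all case analysis: the forward direction becomes monotonicity of a maximum.

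For the write-up, make three small repairs.
\begin{itemize}
\item Delete the initial contradiction attempt. It compares the nodes in the first $m$ rows, which yields $\mu_1+\cdots+\mu_m<\lambda_1+\cdots+\lambda_m$, an inequality consistent with the hypothesis. What closes the argument is comparing nodes to the right of column $k$ and using that both totals equal $n$, which is exactly your second argument.
\item Phrase the reformulation as an implication, not an equivalence. The max-over-$j$ identity only justifies ``$\lambda\trianglerighteq\mu$ implies $\sum_i\max(\lambda_i-k,0)\ge\sum_i\max(\mu_i-k,0)$ for all $k$'', and that is all you use. The reverse implication is essentially the converse of the lemma itself.
\item Let $j$ range over $j\ge 0$, with the empty sum equal to $0$. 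Otherwise the max identity fails for $k>\nu_1$, where the correct value is $0$ but the maximum over $j\ge 1$ is $\nu_1-k<0$.
\end{itemize}
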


\subsection{Littlewood-Richardson coefficient and Kostka number}
It is well-known that the representations of the symmetric group $S_n$ can be parametrized by partitions. For any partition $\lambda$ of $n$, denote the representation of $S_n$ corresponding to $\lambda$ by $\rho_\lambda$. In particular, when $\lambda = (1^n)$, $\rho_\lambda$ is the sign representation; we denote this representation by $\operatorname{sgn}$.

Suppose $n = n_1 + n_2 + \dots + n_r$, and let $H = S_{n_1} \times \dots \times S_{n_r} \leq S_n$. For each $i$, let $\lambda_i$ be a partition of $n_i$. The Littlewood–Richardson coefficient is a useful tool for studying the decomposition of $\operatorname{Ind}_H^{S_n}(\rho_{\lambda_1} \otimes \dots \otimes \rho_{\lambda_r})$. There is a rule, called the Littlewood–Richardson rule, which describes this decomposition. However, the situation becomes simpler when each $\rho_{\lambda_i}$ is the sign representation. In the book by Geck and Pfeiffer \cite{GeckPfeiffer2000}, Section 6.6.3, the following Pieri rule is given:

\begin{theorem}
Suppose $n = k + l$ and $\lambda$ is a partition of $k$. Let $\operatorname{sgn}$ be the sign representation of $S_l$. Then we have the decomposition
\[
\operatorname{Ind}_{S_k \times S_l}^{S_n}(\rho_\lambda \otimes \operatorname{sgn}) = \sum_{\nu} \rho_\nu,
\]
where the sum runs over all partitions $\nu$ of $n$ whose Young diagram is obtained from $\lambda$ by adding $l$ boxes, with no two boxes in the same row.
\end{theorem}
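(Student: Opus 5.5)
The plan is to deduce the Pieri rule from the characteristic map (Frobenius correspondence) between representations of symmetric groups and symmetric functions, and then apply the classical Pieri formula for elementary symmetric functions. Let $\operatorname{ch}$ send $\rho_\nu$ to the Schur function $s_\nu$. This is an isometric isomorphism from $\bigoplus_n R(S_n)$ onto the ring $\Lambda$ of symmetric functions. It sends induction from $S_k\times S_l$ to multiplication:
\[
\operatorname{ch}\operatorname{Ind}_{S_k\times S_l}^{S_n}(\alpha\otimes\beta)=\operatorname{ch}(\alpha)\operatorname{ch}(\beta).
\]
Since $\operatorname{sgn}=\rho_{(1^l)}$, its image is $s_{(1^l)}=e_l$. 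The statement is therefore equivalent to the identity $s_\lambda e_l=\sum_\nu s_\nu$, where $\nu$ ranges over the partitions whose diagram is obtained from $[\lambda]$ by adding a vertical strip of $l$ boxes, no two in the same row.

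To prove the identity $s_\lambda e_l=\sum_\nu s_\nu$, I will use the bialternant formula. In $N\ge \ell(\lambda)+l$ variables, write $a_\alpha=\det(x_i^{\alpha_j})$ and $\delta=(N-1,\dots,0)$, so that $s_\lambda=a_{\lambda+\delta}/a_\delta$. Then
\[
a_{\lambda+\delta}\,e_l=\sum_{\varepsilon}a_{\lambda+\delta+\varepsilon},
\]
where $\varepsilon$ ranges over the $0/1$ vectors with $l$ ones. This holds because $e_l=\sum_\varepsilon x^\varepsilon$ is symmetric and can be absorbed into the antisymmetrization. A term $a_{\lambda+\delta+\varepsilon}$ vanishes exactly when two entries of $\lambda+\delta+\varepsilon$ coincide. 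That happens only when $\lambda_i=\lambda_{i+1}$ with $\varepsilon_i=0$ and $\varepsilon_{i+1}=1$, which is precisely the case where $\lambda+\varepsilon$ fails to be a partition. The surviving terms are exactly those with $\lambda+\varepsilon=\nu$ a partition, that is, $[\nu]/[\lambda]$ a vertical strip, and each has sign $+1$. Dividing by $a_\delta$ and letting $N\to\infty$ (stability of Schur functions) gives the identity.

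Finally, transporting back through $\operatorname{ch}^{-1}$ gives the decomposition, and it is multiplicity free. Transposing also yields the dual version $h_l$, which adds horizontal strips; this is the form used later with Kostka numbers $K_{\mu'\lambda'}$.

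The main obstacle is bookkeeping rather than depth. One must check that the convention ``$\rho_{(1^l)}=\operatorname{sgn}$'' (stated in the paper) matches $\operatorname{ch}(\rho_\nu)=s_\nu$ rather than $s_{\nu'}$. With the opposite convention the rule would read ``no two boxes in the same column.'' I will fix the convention from the fact that $\operatorname{ch}(\operatorname{Ind}_{S_{\mu}}^{S_n}\mathbf 1)=h_\mu$ contains $s_\mu$ with multiplicity one, consistent with the trivial representation being $\rho_{(n)}$. Alternatively, since the paper cites Geck--Pfeiffer \S6.6.3, one can simply invoke that reference.
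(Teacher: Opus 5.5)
Your proof is correct. Note that the paper does not actually prove this statement: it quotes the Pieri rule from Geck--Pfeiffer and moves on.

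You instead derive it from the Frobenius characteristic map ($\rho_\nu\mapsto s_\nu$, $\operatorname{sgn}\mapsto e_l$, with induction from $S_k\times S_l$ becoming multiplication) together with a bialternant computation, and the details check out. A coincidence $\lambda_i+N-i+\varepsilon_i=\lambda_j+N-j+\varepsilon_j$ with $i<j$ forces $j=i+1$, $\lambda_i=\lambda_{i+1}$, $\varepsilon_i=0$ and $\varepsilon_{i+1}=1$. The surviving exponent vectors are already strictly decreasing, so each vertical strip contributes $+s_\nu$ exactly once, and multiplicity-freeness comes for free. To pass back to $\Lambda$, simply take $N\ge n$, where restriction to $N$ variables is injective in degree $n$.

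The citation buys brevity. Your route buys two things the paper leaves implicit. First, it brings the labeling issue into the open. The paper only specifies $\rho_{(1^n)}=\operatorname{sgn}$, whereas what is really needed is $\operatorname{ch}(\rho_\nu)=s_\nu$ for all $\nu$ (the Specht labeling); with the transposed labeling the rule would read ``no two boxes in the same column.'' Second, the same machinery immediately justifies the ``well-known'' Kostka claim made right after the statement. Iterating, or applying $\omega$ to $h_{\lambda'}=\sum_\tau K_{\tau'\lambda'}s_{\tau'}$, gives $e_{n_1}\cdots e_{n_r}=\sum_\tau K_{\tau'\lambda'}s_\tau$.
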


Now consider $n = n_1 + n_2 + \dots + n_r$ with $n_1 \ge n_2 \ge \dots \ge n_r$. Let $\lambda = (n_1, n_2, \dots, n_r)'$ (the conjugate partition) and $H = S_{n_1} \times S_{n_2} \times \dots \times S_{n_r} \le S_n$. For each $i$, let $\operatorname{sgn}_i$ be the sign representation of $S_{n_i}$. Using the Pieri rule repeatedly, we can compute the decomposition of $\operatorname{Ind}_H^{S_n}(\operatorname{sgn}_1 \otimes \dots \otimes \operatorname{sgn}_r)$.

First, we start with the diagram $(1^{n_1})$. Then we add $n_2$ boxes to the previous diagram, respecting the condition that no two boxes lie in the same row. We repeat this process $r-1$ times. Finally we obtain the decomposition
\[
\operatorname{Ind}_H^{S_n}(\operatorname{sgn}_1 \otimes \dots \otimes \operatorname{sgn}_r) = \sum_{\tau} c_\tau \rho_\tau,
\]
where $\tau$ runs over all partitions of $n$. Actually, it is well-known that $c_\tau$ is just the Kostka number $K_{\tau'\lambda'}$.

\begin{lemma}
$c_\tau \neq 0$ if and only if $\tau$ is dominated by $\lambda$, and $c_\lambda = 1$.
\end{lemma}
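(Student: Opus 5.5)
We want to show that $c_\tau \neq 0$ if and only if $\tau \trianglelefteq \lambda$, and that $c_\lambda = 1$. The plan is to work directly with the iterated Pieri construction described above. By the theorem quoted from Geck--Pfeiffer, applied $r-1$ times, $c_\tau$ counts sequences of diagrams
\[
(1^{n_1}) = \tau^{(1)} \subset \tau^{(2)} \subset \cdots \subset \tau^{(r)} = \tau,
\]
where each $\tau^{(i)}/\tau^{(i-1)}$ consists of $n_i$ boxes, no two in the same row. Label the boxes added at step $i$ with the number $i$. Transposing the picture, such a sequence becomes a filling of $[\tau']$ with $n_i$ entries equal to $i$, weakly increasing along rows and strictly increasing down columns. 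In other words, it is a semistandard tableau of shape $\tau'$ and content $(n_1,\dots,n_r)=\lambda'$. This recovers $c_\tau = K_{\tau'\lambda'}$, but we will argue directly with the fillings.

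\textbf{Necessity.} Suppose $c_\tau\neq 0$ and fix such a labelled filling of $[\tau]$. All boxes labelled $1,\dots,j$ lie in $\tau^{(j)}$, and each label occurs at most once per row. Hence every row of $\tau^{(j)}$ has length at most $j$. Summing over rows gives
\[
\sum_{k} \min(\tau_k, j) \;\ge\; n_1+\cdots+n_j.
\]
The left side is $\tau'_1+\cdots+\tau'_j$, the number of boxes of $\tau$ in the first $j$ columns. So $\tau'\trianglerighteq\lambda'$, and by the duality of dominance order this gives $\tau\trianglelefteq\lambda$. One could instead phrase this through the basic combinatorial lemma, with $t_2$ built from the labels by rows, but the counting argument is more direct.

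\textbf{Sufficiency.} Given $\mu=\tau'\trianglerighteq\lambda'=(n_1,\dots,n_r)$, we must produce one semistandard tableau of shape $\mu$ and content $\lambda'$. This is the classical fact that $K_{\mu\nu}>0$ whenever $\mu\trianglerighteq\nu$, and it is the main obstacle. We would argue by induction on $r$, always removing the largest label $r$. Remove $n_r$ boxes from $\mu$, taking them from the ends of rows and from distinct columns, so that the result $\bar\mu$ is a partition of $n-n_r$ with
\[
\bar\mu \trianglerighteq (n_1,\dots,n_{r-1}).
\]
The natural choice is the greedy one: strip a horizontal strip of size $n_r$ taken from the lowest possible rows. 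The dominance inequalities must then be verified row by row. They use that $n_r$ is the smallest part and that $\mu_1+\cdots+\mu_j\ge n_1+\cdots+n_j$ for every $j$. If this bookkeeping becomes messy, we would instead fall back on a raising-operator induction along the dominance order. This uses that $\lambda'$ is reached from $\mu$ by elementary moves, each transferring one box from a row to a lower row, and that $K$-positivity is preserved under each such move. The more direct route is to cite the standard result, e.g.\ from Macdonald or James.

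\textbf{The coefficient $c_\lambda$.} When $\tau=\lambda$, we have $\tau'=\lambda'$, so every inequality in the necessity argument is an equality. Thus for each $j$ the first $j$ columns of $\tau^{(j)}$ are completely filled. Inductively this forces $\tau^{(j)}$ to be exactly the union of the first $j$ columns of $\lambda$, so $\tau^{(j)}=(n_1,\dots,n_j)'$. The chain is therefore unique, and $c_\lambda=1$.
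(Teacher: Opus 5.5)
Your proof is correct. It is a self-contained version of what the paper only cites. The paper's proof is the single remark that these are properties of Kostka numbers: it takes $c_\tau=K_{\tau'\lambda'}$ as well known, then invokes $K_{\mu\nu}\neq 0\iff\mu\trianglerighteq\nu$ and $K_{\mu\mu}=1$ together with the duality of dominance order. By contrast, you do three things yourself:
\begin{itemize}
\item You derive $c_\tau=K_{\tau'\lambda'}$ from the iterated Pieri rule, since chains of vertical strips in $[\tau]$ transpose to semistandard tableaux of shape $\tau'$ and content $\lambda'$.
\item You prove necessity directly from $n_1+\cdots+n_j=|\tau^{(j)}|\le\tau'_1+\cdots+\tau'_j$.
\item You get $c_\lambda=1$ from the equality case of that inequality.
\end{itemize}
Only sufficiency remains at the paper's level, as a citation.

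That step needs no fallback, because your greedy strip closes in one line. Put $\mu=\tau'$ and let $k_0$ be the largest index with $\mu_{k_0}\ge n_r$; it exists since $\mu_1\ge n_1\ge n_r$. Remove all $\mu_k-\mu_{k+1}$ end boxes of each row $k>k_0$, and $n_r-\mu_{k_0+1}$ end boxes of row $k_0$. This is a horizontal strip of size $n_r$, and the partial sums for $j<k_0$ are unchanged. For $k_0\le j\le r-1$, the removed boxes below row $j$ telescope to $\mu_{j+1}$, so
\[
\bar\mu_1+\cdots+\bar\mu_j=\mu_1+\cdots+\mu_{j+1}-n_r\ \ge\ n_1+\cdots+n_{j+1}-n_r\ \ge\ n_1+\cdots+n_j .
\]
This uses exactly the two facts you named: dominance of $\mu$ at index $j+1$, and $n_{j+1}\ge n_r$. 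For $j\ge r-1$ both sides equal $n-n_r$.

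With this, your argument proves the lemma from the Pieri rule alone. The paper's version rests on external Kostka-number facts, which it itself flags as nontrivial.
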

\begin{proof}
It is just the properties of Kostka number, but we should note that these properties are not trivial.
\end{proof}

\subsection{Distinguished left coset representatives}

Let $(W,S)$ be a Coxeter system, where $W$ is a finite Coxeter group and $S$ is its set of generators. For a subset $J\subseteq S$, we denote by $W_J = \langle J \rangle$ the corresponding parabolic subgroup; then $(W_J, J)$ is again a Coxeter system, and the length function on $W_J$ coincides with the restriction of the length function $\ell$ on $W$.

The following proposition provides a canonical set of representatives for the left cosets of $W_J$ in $W$.

\begin{theorem}
For $J\subseteq S$, set
$$
Y_J := \{ x \in W \mid \ell(xs) > \ell(x) \text{ for all } s \in J \}.
$$
\begin{enumerate}
\item[(a)] Every element $w \in W$ can be written uniquely as $w = x v$ with $x \in Y_J$ and $v \in W_J$; moreover $\ell(w) = \ell(x) + \ell(v)$.
\item[(b)] For $x \in W$, the following are equivalent:
  \begin{itemize}
  \item[(i)] $x \in Y_J$;
  \item[(ii)] $\ell(x v) = \ell(x) + \ell(v)$ for all $v \in W_J$;
  \item[(iii)] $x$ is the unique element of minimal length in the left coset $x W_J$.
  \end{itemize}
  In particular, $Y_J$ is a complete set of representatives for the left cosets of $W_J$ in $W$.
\end{enumerate}
\end{theorem}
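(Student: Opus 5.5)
The plan is the standard Coxeter-group argument, with induction on $\ell(w)$ as the main tool. Recall the basic exchange fact: for $w\in W$ and $s\in S$ we have $\ell(ws)=\ell(w)\pm 1$. Combined with the exchange/deletion condition, this lets us compare lengths inside a coset $wW_J$.

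For (a), existence comes first. In the coset $wW_J$, pick an element $x$ of minimal length. Then $x\in Y_J$: if $\ell(xs)<\ell(x)$ for some $s\in J$, then $xs\in wW_J$ would be shorter, a contradiction. Write $w=xv$ with $v\in W_J$.

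The length additivity $\ell(xv)=\ell(x)+\ell(v)$ for all $x\in Y_J$, $v\in W_J$ is the key step, i.e. (i)$\Rightarrow$(ii), and I would prove it by induction on $\ell(v)$. If $\ell(v)>0$, write $v=v's$ with $s\in J$ and $\ell(v')=\ell(v)-1$; by induction $\ell(xv')=\ell(x)+\ell(v')$. It suffices to show $\ell(xv's)>\ell(xv')$.

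Suppose instead $\ell(xv's)<\ell(xv')$. Take a reduced expression of $x$ followed by a reduced expression of $v'$ (reduced by induction). The exchange condition then says $xv's$ is obtained by deleting one letter. If the deleted letter lies in the $v'$ part, we get $v's=v''$ with $\ell(v'')<\ell(v')$, contradicting $\ell(v's)>\ell(v')$, which holds because $v=v's$ is reduced in $W_J$ and lengths agree. If the deleted letter lies in the $x$ part, then $xv's=x_1v'$ with $\ell(x_1)=\ell(x)-1$. This gives $x=x_1v'sv'^{-1}$, so $x\in x_1W_J$. The coset $xW_J$ then contains $x_1v'sv'^{-1}\cdot(v's v'^{-1})^{-1}\cdots$; more cleanly, $x_1=x\,v'sv'^{-1}\in xW_J$ has length smaller than $\ell(x)$.

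To turn this into a contradiction I need the intermediate claim that every $x\in Y_J$ is of minimal length in its coset, i.e. (i)$\Rightarrow$(iii). I expect this to be the main obstacle: the naive induction produces a shorter element of the coset, but deriving a contradiction requires exactly the minimality that is still being established. I would avoid the circularity by doing both at once. Induct on $\ell(v)$ for the combined statement "for $x\in Y_J$ and $u\in W_J$ with $\ell(u)\le k$, $\ell(xu)=\ell(x)+\ell(u)$." In the bad case above, let $u=v'sv'^{-1}$, so $x_1=xu$. Then $\ell(x_1)<\ell(x)$, yet $\ell(u)\le 2\ell(v')+1$ might exceed $k$, so this route needs care.

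Alternatively I would use the root-system characterization: $x\in Y_J$ iff $x(\alpha_s)>0$ for all $s\in J$, iff $x$ maps all positive roots of $\Phi_J$ to positive roots. Then $\ell(xv)=\#\{\beta>0: xv\beta<0\}$, and splitting positive roots into $\Phi_J^+$ (permuted up to sign by $v$) and $\Phi^+\setminus\Phi_J^+$ (stable under $v$) gives additivity directly. Since $W$ is finite, a geometric representation is available, so I would take this route for (i)$\Rightarrow$(ii).

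The remaining implications are then quick. (ii)$\Rightarrow$(iii): every element of $xW_J$ is $xv$ with $\ell(xv)=\ell(x)+\ell(v)>\ell(x)$ for $v\ne1$, so $x$ is the unique minimum. (iii)$\Rightarrow$(i) is immediate, as in the existence argument above.

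Uniqueness in (a) follows because both $x,x'\in Y_J$ representing the same coset are its unique minimal element, hence equal. Then $v=x^{-1}w$ is determined, and $\ell(w)=\ell(x)+\ell(v)$ by (ii). Finally, since each coset contains exactly one element of $Y_J$, the set $Y_J$ is a complete set of coset representatives.
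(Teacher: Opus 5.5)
The paper does not prove this statement. It is recalled as standard background on Coxeter groups (the distinguished-coset-representative result from Geck--Pfeiffer's book), so there is no argument in the paper to compare against. Your final argument is correct.

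Delete the abandoned exchange-condition attempt from the write-up. The root-system step carries all the weight, so spell out the two facts behind ``gives additivity directly'':
\begin{itemize}
\item Set $\Phi_J:=\Phi\cap\operatorname{span}\{\alpha_s: s\in J\}$. Every $\beta\in\Phi_J^+$ is a nonnegative combination of the $\alpha_s$ with $s\in J$. Hence $x(\alpha_s)>0$ for all $s\in J$ gives $x(\Phi_J^+)\subseteq\Phi^+$, i.e.\ $N(x)\cap\Phi_J^+=\emptyset$.
\item Each $s\in J$ changes only the $\alpha_s$-coefficient of a root. So $W_J$ stabilizes $\Phi^+\setminus\Phi_J^+$, and therefore $N(v)\subseteq\Phi_J^+$.
\end{itemize}
Then for $\beta\in\Phi_J^+$ one has $xv\beta<0$ iff $v\beta<0$, which contributes exactly $\ell(v)$. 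On the other part, $v$ permutes $\Phi^+\setminus\Phi_J^+$, which contributes exactly $|N(x)|=\ell(x)$.

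The implications (ii)$\Rightarrow$(iii)$\Rightarrow$(i), uniqueness in (a), and the coset statement then go through as you wrote. Finiteness of $W$ is not needed, since the geometric representation exists for every Coxeter system.

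The usual textbook proof avoids roots. The circularity you hit is removed by reversing the order of the implications.
\begin{itemize}
\item First prove additivity only for an element $x_0$ of minimal length in its coset. In your induction, the case where the deleted letter lies in the $x$-part produces $x_1\in x_0W_J$ with $\ell(x_1)<\ell(x_0)$. For $x_0$ this is a genuine contradiction.
\item Then deduce (i)$\Rightarrow$(iii). Suppose $x\in Y_J$ and $x=x_0v$ with $1\ne v\in W_J$. Take $s\in J$ to be the last letter of a reduced $J$-word for $v$, so $\ell(vs)<\ell(v)$. Then $\ell(xs)=\ell(x_0)+\ell(vs)<\ell(x_0)+\ell(v)=\ell(x)$, contradicting $x\in Y_J$. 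So $x=x_0$.
\item Finally (ii) for $x$ follows from the additivity already proved for $x_0$.
\end{itemize}

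That route needs only the exchange condition and $\ell_J=\ell|_{W_J}$. Yours imports the geometric facts $\ell(w)=|N(w)|$ and $\ell(ws)>\ell(w)\iff w(\alpha_s)>0$. In return, it proves (i)$\Rightarrow$(ii) directly for every $x\in Y_J$ by counting inversions, without passing through minimal coset elements.
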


\begin{definition}[Distinguished left coset representatives]
The set $Y_J$ defined in the above theorem is called the set of \emph{distinguished left coset representatives} of $W_J$ in $W$. We write
$$
W = Y_J \cdot W_J
$$
to indicate that the multiplication map $Y_J \times W_J \to W$, $(x,v) \mapsto x v$, is a bijection and that $\ell(x v) = \ell(x) + \ell(v)$ for all $x \in Y_J$, $v \in W_J$.
\end{definition}

The following lemma is a crucial tool for understanding how distinguished representatives behave under left multiplication by a generator.

\begin{lemma}[Deodhar's lemma]
Let $J\subseteq S$, let $x \in Y_J$ and let $s \in S$. Then either $sx \in Y_J$ or $sx = x u$ for some $u \in J$.
\end{lemma}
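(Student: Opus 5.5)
We have to prove Deodhar's lemma: for $x\in Y_J$ and $s\in S$, either $sx\in Y_J$ or $sx = xu$ for some $u\in J$. The plan is to assume $sx\notin Y_J$ and deduce the second alternative, splitting according to whether left multiplication by $s$ lengthens or shortens $x$. The tools are the characterization of $Y_J$ in part (b) of the preceding theorem (minimal length in the left coset, with additivity of lengths) and the exchange condition for the Coxeter system $(W,S)$.

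\emph{Case 1: $\ell(sx)<\ell(x)$.} I will show that $sx\in Y_J$ automatically, so this case cannot produce a counterexample. Suppose instead $\ell(sxt)<\ell(sx)$ for some $t\in J$. By part (a), write $sx=yv$ with $y\in Y_J$, $v\in W_J$ and $v\neq 1$. Then $x=s\,yv$ and $\ell(x)=\ell(sx)+1=\ell(y)+\ell(v)+1$. On the other hand, $x\in Y_J$, so $x$ is the minimal element of $xW_J=syW_J$. Hence $\ell(x)\le \ell(sy)\le \ell(y)+1$. Combining the two gives $\ell(v)\le 0$, contradicting $v\neq 1$. So $sx\in Y_J$.

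\emph{Case 2: $\ell(sx)>\ell(x)$ and $sx\notin Y_J$.} Pick $t\in J$ with $\ell(sxt)<\ell(sx)=\ell(x)+1$. Since $x\in Y_J$, we have $\ell(xt)=\ell(x)+1$. Choose a reduced word $s_1\cdots s_k$ for $x$; then $s\,s_1\cdots s_k$ is a reduced word for $sx$. The exchange condition applied to right multiplication by $t$ deletes one letter: either $sxt=s_1\cdots s_k=x$, or $sxt=s\,s_1\cdots\widehat{s_i}\cdots s_k$. In the second subcase, $xt=s_1\cdots\widehat{s_i}\cdots s_k$ has length at most $k-1<\ell(x)$, contradicting $x\in Y_J$. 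Hence $sxt=x$, i.e.\ $sx=xt$ with $t\in J$, which is the required conclusion with $u=t$.

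The only delicate point is Case 1, where we must rule out that shortening $x$ on the left leaves the set of minimal coset representatives. The argument above handles it purely via the coset-minimality characterization in (b)(iii) and the length additivity in (a), without any further Coxeter machinery. Case 2 is a direct application of the exchange condition. Together the two cases prove the lemma.
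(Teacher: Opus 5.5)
Your proof is correct. The paper gives no proof of this lemma; it quotes it as a standard tool from Geck--Pfeiffer alongside the description of $Y_J$, so there is no argument in the paper to compare yours against.

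Your argument is the usual one. When $\ell(sx)<\ell(x)$, coset minimality forces $sx\in Y_J$. When $\ell(sx)>\ell(x)$, the exchange condition applied to the reduced word $s\,s_1\cdots s_k$ can only delete the letter $s$, since deleting any $s_i$ would make $xt$ shorter than $x$; this gives $sxt=x$.

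Your Case 1 can also be done in one line without part (a). For any $t\in J$,
\[
\ell(sxt)\ \ge\ \ell(xt)-1\ =\ \ell(x)\ >\ \ell(sx),
\]
so $sx\in Y_J$ directly.
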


\subsection{Parabolic induction for Hecke algebras}

Let $(W,S)$ be a finite Coxeter system and let $\mathbf{H}_q$ be the Iwahori–Hecke algebra of $W$ with parameter $q$ (a prime power), specialised over $\mathbb{C}$.  
We assume that $\mathbf{H}_q$ is semisimple (this holds for generic $q$, e.g. when $q$ is not a root of unity; in particular it is true for the Hecke algebras arising from $p$-adic groups).  
All modules are left modules.  
For a subset $J\subseteq S$, denote by $(\mathbf{H}_q)_J$ the parabolic subalgebra generated by $\{T_s\mid s\in J\}$.

\begin{definition}[Induced module, cf. Definition 9.1.1 \cite{GeckPfeiffer2000}]
Let $V$ be a left $(\mathbf{H}_q)_J$-module. The induced module is defined as
\[
\operatorname{Ind}_J^S(V) := \mathbf{H}_q \otimes_{(\mathbf{H}_q)_J} V,
\]
where $\mathbf{H}_q$ is viewed as a $(\mathbf{H}_q, (\mathbf{H}_q)_J)$-bimodule (left multiplication by $\mathbf{H}_q$, right multiplication by $(\mathbf{H}_q)_J$).  
The action of $\mathbf{H}_q$ on $\operatorname{Ind}_J^S(V)$ is given by $h\cdot (h'\otimes v) = (hh')\otimes v$.
\end{definition}

To describe the structure of $\operatorname{Ind}_J^S(V)$ we use the set $Y_J$ of distinguished left coset representatives of $W_J$ in $W$ (see Section 2).  
The algebra $\mathbf{H}_q$ is free as a right $(\mathbf{H}_q)_J$-module with basis $\{T_x\mid x\in Y_J\}$.  
Consequently, every element of $\operatorname{Ind}_J^S(V)$ can be written uniquely as $\sum_{x\in Y_J} T_x \otimes v_x$ with $v_x\in V$.

The action of the generators $T_s$ ($s\in S$) on $\operatorname{Ind}_J^S(V)$ is given by explicit rules.  
For $x\in Y_J$ and $v\in V$, we have the following formulas (cf. the first displayed formula on page 287 of \cite{GeckPfeiffer2000}, adapted to left modules):

\begin{lemma}[Action of generators on induced modules]
With the notation above,
\[
T_s \cdot (T_x \otimes v) = 
\begin{cases}
T_{sx} \otimes v, & \text{if } sx\in Y_J \text{ and } \ell(sx)>\ell(x);\\[4pt]
T_x \otimes (T_u \cdot v), & \text{if } sx = x u \text{ for some } u\in J;\\[4pt]
q\, T_{sx}\otimes v + (q-1)\, T_x\otimes v, & \text{if } sx\in Y_J \text{ and } \ell(sx)<\ell(x).
\end{cases}
\]
\end{lemma}

The compatibility of induction with the specialisation $q=1$ (which sends $\mathbf{H}_q$ to the group algebra $\mathbb{C}[W]$) is established in Section 9.1.9 of \cite{GeckPfeiffer2000}.  
This yields the following fundamental result.

\begin{theorem}[Decomposition numbers for parabolic induction]
\label{thm:induction_decomposition}
For any parabolic subgroup $W_J\subseteq W$ and any irreducible representation $\psi$ of $(\mathbf{H}_q)_J$, the multiplicities of irreducible $\mathbf{H}_q$-modules in $\operatorname{Ind}_J^S(\psi)$ are exactly the same as the multiplicities of irreducible $W$-modules in $\operatorname{Ind}_{W_J}^W(\psi|_{W_J})$.
\end{theorem}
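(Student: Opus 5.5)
The plan is to prove the statement through Tits' deformation theorem, by comparing $\operatorname{Ind}_J^S(\psi)$ with its specialisation at $q=1$. The first step is to pass from $\mathbb{C}$ to a generic parameter. Let $A=\mathbb{C}[u,u^{-1}]$ and let $\mathbf{H}_A$ be the generic Iwahori--Hecke algebra, free over $A$ with basis $\{T_w\}_{w\in W}$ and quadratic relations $(T_s-u)(T_s+1)=0$. Let $K$ be an algebraic closure of $\mathbb{C}(u)$ (working instead over a splitting field $\mathbb{C}(u^{1/N})$ is enough). Then $\mathbf{H}_K$ is split semisimple. Its specialisations $\theta_1:u\mapsto 1$ and $\theta_q:u\mapsto q$ give $\mathbb{C}[W]$ and $\mathbf{H}_q$, and both are semisimple ($\mathbf{H}_q$ by the paper's standing assumption). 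Tits' deformation theorem, in the form of \cite{GeckPfeiffer2000} §§7.3--7.4 and 8.1, then gives bijections $\operatorname{Irr}(\mathbf{H}_K)\leftrightarrow\operatorname{Irr}(\mathbb{C}[W])$ and $\operatorname{Irr}(\mathbf{H}_K)\leftrightarrow\operatorname{Irr}(\mathbf{H}_q)$, characterised by specialising character values on the basis $T_w$. The same construction for the parabolic subalgebra $(\mathbf{H}_A)_J$ gives the analogous bijections for $W_J$. This is the sense in which ``$\psi|_{W_J}$'' is to be read: it is the $W_J$-module matched with $\psi$ under these bijections.

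Next I lift $\psi$ to a $K$-module. Let $\psi_K$ be the irreducible $(\mathbf{H}_K)_J$-module corresponding to $\psi$. Its character values $\chi_{\psi_K}(T_w)$ lie in the integral closure of $A$, and they specialise to $\chi_\psi(T_w)$ under $u\mapsto q$ and to $\chi_{\psi|_{W_J}}(w)$ under $u\mapsto 1$. Then I form $\operatorname{Ind}_J^S(\psi_K)=\mathbf{H}_K\otimes_{(\mathbf{H}_K)_J}\psi_K$.

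The core step is that characters of induced modules specialise correctly. The module $\operatorname{Ind}_J^S(\psi_K)$ has basis $T_x\otimes v$ with $x\in Y_J$ and $v$ running through a basis of $\psi_K$. By the Lemma on the action of generators, the matrix of each $T_w$ in this basis has entries that are universal polynomials in $u$ and in the matrix entries of the $T_v$, $v\in W_J$, acting on $\psi_K$, with the same formulas for every parameter. Taking traces, $\chi_{\operatorname{Ind}(\psi_K)}(T_w)$ is a $\mathbb{Z}[u]$-linear combination of the values $\chi_{\psi_K}(T_v)$. The combination is independent of the parameter: this is the Hecke-algebra Frobenius formula of \cite{GeckPfeiffer2000} §9.1.

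The main obstacle is that the matrix entries of $\psi_K$ need not lie in $A$, only its character values need to be integral. To avoid working with entries, I use that the induced character is given by an explicit formula in terms of $\chi_{\psi_K}$ alone, namely the formula in §9.1.9. Specialising this formula at $u=q$ gives the character of $\operatorname{Ind}_J^S(\psi)$, and specialising at $u=1$ gives the character of $\operatorname{Ind}_{W_J}^W(\psi|_{W_J})$ by the ordinary Frobenius formula.

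Finally, I decompose and specialise. Write $\operatorname{Ind}_J^S(\psi_K)=\bigoplus_{\chi} m_\chi\,\chi$ over $\operatorname{Irr}(\mathbf{H}_K)$. Applying $\theta_q$ and $\theta_1$ to the character identity, and using that both specialised algebras are semisimple, so that modules are determined by their characters, gives
\[
\operatorname{Ind}_J^S(\psi)\cong\bigoplus_\chi m_\chi\,\chi_q
\quad\text{and}\quad
\operatorname{Ind}_{W_J}^W(\psi|_{W_J})\cong\bigoplus_\chi m_\chi\,\chi_1 .
\]
The multiplicities in the two decompositions therefore coincide under the Tits bijection, which is the claim of the Theorem.
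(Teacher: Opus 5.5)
Your proposal is correct and follows essentially the paper's route: the paper gives no argument beyond citing the compatibility of parabolic induction with specialisation in \cite{GeckPfeiffer2000}, \S 9.1.9, and you have unpacked exactly that reference via Tits' deformation theorem and the parameter-independent induced-character formula, while correctly reading $\psi|_{W_J}$ as the Tits partner of $\psi$. The one point you leave implicit is that the $J$-side bijection also needs $(\mathbf{H}_q)_J$ to be semisimple, which holds for the same reason as for $\mathbf{H}_q$ (e.g.\ $q$ a prime power, or more generally not a root of unity).
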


Thus, in the semisimple case (which holds for the Hecke algebras arising from $p$-adic groups), the parabolic induction rules for $\mathbf{H}_q$ are identical to those for the finite Coxeter group $W$.

\subsection{The action of the Hecke algebra}

Even though the discussion of this subsection can be adapted to a larger setting, for simplicity we just consider $G = GL_n(\mathbb{F}_q)$. Let $U_\alpha$ be the root group corresponding to a root $\alpha$, and let $B$ be the standard Borel subgroup of $G$.

For every representation $V$ of $G$ all of whose irreducible components lie in the Harish-Chandra principal series, we can take the $B$-fixed vectors $V^B$ and then make it into a Hecke algebra module. The finite Hecke algebra associated to $G$ is $\operatorname{End}_G(\operatorname{Ind}_B^G(\mathbf{1}))^{\mathrm{opp}}$. 

We should  mention a well-known result \cite{GeckJacon2011}:
\begin{theorem}
The functor $\Theta: M \longmapsto \operatorname{Hom}_{G}(\operatorname{Ind}_{B}^{G}\mathbf{1},M)$ induces a bijection from irreducible subrepresentations of the principal Harish-Chandra series to the simple Hecke algebra modules up to isomorphism. The action of this $H$-module is $h*f=f \circ h$ for $h\in H$ and $f \in \operatorname{Hom}_{G}(\operatorname{Ind}_{B}^{G}\mathbf{1},M)$.
\end{theorem}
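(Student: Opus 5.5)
This is the standard Harish-Chandra theory statement for a finite group with a $BN$-pair. I would prove it by realising the principal series as the semisimple $G$-module $\operatorname{Ind}_B^G\mathbf{1}$ and reducing everything to its endomorphism algebra. Set $X=\operatorname{Ind}_B^G\mathbf{1}$ and $H=\operatorname{End}_G(X)^{\mathrm{opp}}$. By Frobenius reciprocity, $\Theta(M)=\operatorname{Hom}_G(X,M)\cong M^B$ for every $G$-module $M$. The action $h*f=f\circ h$ is a left $H$-action precisely because we took the opposite algebra: $(h_1h_2)*f$ corresponds to $f\circ h_2\circ h_1$ in $\operatorname{End}_G(X)$, which equals $h_1*(h_2*f)$. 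Since $\mathbb{C}[G]$ is semisimple (Maschke), the functor $\Theta$ is exact and additive.

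Next I would decompose $X$ into isotypic components, $X\cong\bigoplus_{i} M_i^{\oplus d_i}$, where the $M_i$ are the pairwise non-isomorphic irreducible constituents, i.e.\ the principal series. Schur's lemma then gives $\operatorname{End}_G(X)\cong\prod_i \mathrm{Mat}_{d_i}(\mathbb{C})$, so $H$ is semisimple with simple modules in bijection with the index set $\{i\}$. Under this identification, $\Theta(M_i)=\operatorname{Hom}_G(X,M_i)$ is $d_i$-dimensional, and $H$ acts on it through the $i$-th factor by the natural action of $\mathrm{Mat}_{d_i}(\mathbb{C})$ on $\mathbb{C}^{d_i}$ (up to transpose from the opposite structure). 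Hence $\Theta(M_i)$ is the simple module of the $i$-th factor, and it vanishes on the other factors.

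From this the map $M_i\mapsto\Theta(M_i)$ sends distinct $M_i$ to non-isomorphic simple $H$-modules, since they are supported on different matrix factors. Every simple $H$-module is hit, because each simple module of $\prod_i\mathrm{Mat}_{d_i}$ is the natural module of some factor. Finally, for an irreducible $M$ outside the principal series we have $\Theta(M)=\operatorname{Hom}_G(X,M)=0$, which explains the restriction to the principal Harish-Chandra series.

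I expect the main obstacle to be the bookkeeping that identifies $\operatorname{Hom}_G(X,M_i)$ with the natural module of the correct matrix factor, including the opposite-algebra convention. Semisimplicity makes this routine; a cleaner route is the general fact that for a semisimple module $X$ the functor $\operatorname{Hom}_G(X,-)$ gives an equivalence between the category of direct sums of constituents of $X$ and the category of $\operatorname{End}_G(X)^{\mathrm{opp}}$-modules. Note that the bijection with modules of $\mathbf{H}_q$ via the Iwahori–Matsumoto presentation is not needed for this statement; it only serves afterwards to label the $M_i$ by partitions $\lambda$.
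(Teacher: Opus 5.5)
Your proof is correct. The paper gives no argument of its own for this statement. It simply quotes it as well known, with a reference to Geck--Jacon, where the result sits inside a general Hom-functor framework for the endomorphism algebra of $\operatorname{Hind}$-type induced modules; that framework is set up so that it also applies when $\operatorname{Ind}_B^G\mathbf{1}$ is not semisimple.

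Since the paper works exclusively with complex representations, your route through Maschke, Schur and the Wedderburn decomposition $\operatorname{End}_G(X)\cong\prod_i\mathrm{Mat}_{d_i}(\mathbb{C})$ is a complete, self-contained proof of exactly what is needed. The extra generality of the cited framework buys nothing in this setting. Your approach also makes explicit two points the paper leaves implicit:
\begin{itemize}
\item Passing to the opposite algebra is precisely what makes $h*f=f\circ h$ a left action. Your check $(h_1h_2)*f=f\circ h_2\circ h_1=h_1*(h_2*f)$ shows this.
\item $\dim\Theta(M_i)=\dim M_i^B=d_i$ is the multiplicity of $M_i$ in $\operatorname{Ind}_B^G\mathbf{1}$. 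This compatibility of multiplicities is what later allows the paper to read off $K/K_+$-multiplicities from multiplicities of Hecke modules.
\end{itemize}
You are also right that the Iwahori--Matsumoto presentation plays no role in the bijection itself. It enters only when the simple modules are labelled by partitions.
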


We can go one step further using Frobenius reciprocity:
$$
\operatorname{Hom}_{G}(\operatorname{Ind}_{B}^{G}\mathbf{1},M)=\operatorname{Hom}_{B}(\mathbf{1},M)=M^{B}.
$$
Hence if we have a finite dimensional representation $V$ of $G$, and we assume further that every irreducible component of $V$ is a subrepresentation of $\operatorname{Ind}_{B}^{G}\mathbf{1}$, then we can take $B$-invariant vectors of $V$ to make it into an $H$-module. Also note that ${Ind}_{B}^{G}\mathbf{1}=\mathbb{C}[G/B]$.

Then for an arbitrary element $n$ in the normalizer $N_{G}(T)$, we can define an element of $H$:
$$
T_n:\operatorname{Ind}_{B}^{G}\mathbf{1} \longrightarrow \operatorname{Ind}_{B}^{G}\mathbf{1}
$$
which sends $xB$ to the sum of all cosets $yB$ such that $x^{-1}y \in BnB$. We also define $q_w=[BwB:B]$ for an arbitrary element $w$ in the Weyl group. Then we have:

\begin{theorem}
$\{T_w\mid w\in W\}$ forms a basis of the Iwahori–Hecke algebra $H$. If we further assume that $S$ is the set of simple reflections, then $\{T_w\mid w \in W\}$ forms a basis of $H$, and $H$ has the following presentation:
$$
H = \left\langle T_s\;(s\in S) \;\middle|\; 
\underbrace{T_sT_tT_s\cdots}_{m_{st}\text{ factors}} = \underbrace{T_tT_sT_t\cdots}_{m_{st}\text{ factors}},\;
(T_s - q_s)(T_s +1) = 0 \right\rangle.
$$
\end{theorem}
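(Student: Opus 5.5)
We would prove the statement in two steps. First we show that the $T_w$ with $w\in W$ span $H$ and are linearly independent. Then we derive the braid relations and the quadratic relation, and check that they present $H$.

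\textbf{Basis.} By the Bruhat decomposition $G=\bigsqcup_{w\in W}BwB$, the $G$-equivariant endomorphisms of $\mathbb{C}[G/B]$ correspond, via $\phi\mapsto(\text{coefficient function of }\phi(B))$, to left-$B$-invariant functions on $G/B$. Equivalently, they correspond to $B$-bi-invariant functions on $G$. These are spanned by the characteristic functions of the double cosets $BwB$, and those functions are linearly independent. The operator $T_n$ depends only on $BnB$, that is, only on the image $w$ of $n$ in $W$, so we may write $T_w$. Under the correspondence above, $T_w$ is exactly the characteristic function of $BwB$. Hence $\{T_w\}$ is a basis and $\dim H=|W|$.

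\textbf{Multiplication rules.} The key step is to prove that $T_sT_w=T_{sw}$ whenever $\ell(sw)>\ell(w)$. We use the standard fact $BsB\cdot BwB=BswB$ in this case. We then count fibres to show that the structure constant is $1$: for $y\in BswB$, the number of ways to write the coset $yB$ as $x_1B$ with $x_1\in BsB$ and $x_1^{-1}yB\subseteq BwB$ equals $q_{sw}/(q_sq_w)=1$. This uses $q_{sw}=q_sq_w$, which follows from $BsB\,wB$ decomposing compatibly with the root groups, since $U_{\alpha_s}$ normalizes the relevant product of root groups.

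Iterating this rule along a reduced expression gives $T_w=T_{s_1}\cdots T_{s_k}$. The braid relations follow immediately, because both sides equal $T_{w_{st}}$, where $w_{st}$ is the longest element of $\langle s,t\rangle$.

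For the quadratic relation, we compute $T_s^2$ inside the rank-one Levi, which reduces to $GL_2(\mathbb{F}_q)$. There $BsB\cdot BsB=B\cup BsB$. The coefficient of $T_1$ is the number of $x_1B\subseteq BsB$, which is $q_s$. The coefficient of $T_s$ is $q_s-1$. This gives $T_s^2=(q_s-1)T_s+q_s$, i.e. $(T_s-q_s)(T_s+1)=0$.

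\textbf{Presentation.} Let $\tilde H$ be the abstract algebra defined by the stated generators and relations. The map $\tilde H\to H$ is surjective, since $H$ is generated by the $T_s$. We show that $\tilde H$ is spanned by the elements $\tilde T_w:=\tilde T_{s_1}\cdots\tilde T_{s_k}$ for reduced expressions $s_1\cdots s_k$ of $w$. These are well defined by Matsumoto's theorem, which says any two reduced expressions are linked by braid moves. The span of the $\tilde T_w$ is stable under left multiplication by every $\tilde T_s$, using the quadratic relation when $\ell(sw)<\ell(w)$. It contains $1$, so it is all of $\tilde H$. Thus $\dim\tilde H\le|W|=\dim H$, and the surjection $\tilde H\to H$ is an isomorphism.

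The main obstacle is the counting argument giving $T_sT_w=T_{sw}$ when $\ell(sw)>\ell(w)$, since it needs $q_{sw}=q_sq_w$. For this we would use the explicit decomposition $BwB=U_w\dot wB$ with $U_w=\prod_{\alpha>0,\,w^{-1}\alpha<0}U_\alpha$, so that $q_w=q^{\ell(w)}$.
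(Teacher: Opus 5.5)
The paper does not prove this statement. It quotes it as background, namely Iwahori's classical description of $\operatorname{End}_G(\mathbb{C}[G/B])$, and uses it without argument. Your proof is the standard textbook one and it is correct. It uses the Bruhat decomposition for the basis, the rule $T_sT_w=T_{sw}$ for $\ell(sw)>\ell(w)$ together with the rank-one quadratic relation, and Matsumoto's theorem to bound $\dim\tilde H\le|W|$.

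A few points should be tidied.

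First, your fibre count is inverted. Double counting the pairs $(x_1B,yB)$ with $x_1\in BsB$ and $x_1^{-1}y\in BwB$ gives $q_sq_w=c\,q_{sw}$. The count is constant on $BswB$ by bi-$B$-invariance, so the structure constant is $c=q_sq_w/q_{sw}$, not $q_{sw}/(q_sq_w)$. This is harmless, since both equal $1$.

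Second, you can remove what you call the main obstacle by counting directly.
\begin{itemize}
\item The cosets in $BsB/B$ are $u\dot sB$ with $u\in U_{\alpha_s}$.
\item Take $y=\dot s\dot w$. For $u=1$ we get $x_1^{-1}y=\dot w\in BwB$.
\item For $u\neq 1$, the element $\dot s^{-1}u^{-1}\dot s$ is a non-trivial element of $U_{-\alpha_s}$. By the $GL_2$ computation it lies in $BsB$, so $x_1^{-1}y\in BsB\,wB=BswB$, which is disjoint from $BwB$.
\end{itemize}
Hence $c=1$ directly, and $q_{sw}=q_sq_w$ follows as a consequence rather than being an input. If you keep your route instead, note that $q_w=q^{\ell(w)}$ needs the uniqueness of $u\in U_w$ in $BwB=U_w\dot wB$ (the sharp form of the Bruhat decomposition), not just the equality of sets.

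Third, the paper defines $H$ as $\operatorname{End}_G(\operatorname{Ind}_B^G\mathbf{1})^{\mathrm{opp}}$. With the opposite multiplication, $T_sT_w$ corresponds to the convolution $\operatorname{ch}_{BsB}*\operatorname{ch}_{BwB}$, which is exactly what your count computes. Under plain composition, left and right would be swapped, and one would get $T_{ws}$ for $\ell(ws)>\ell(w)$. In any case the stated presentation is invariant under passing to the opposite algebra, so your conclusion is unaffected.
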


The action of this Hecke algebra has already been written down in an explicit way. We just state the explicit formula here. And we will also give a  proof in the appendix.

\begin{lemma}
Suppose that $V$ is a representation of $G$ every irreducible component of which lies in the principal Harish-Chandra series of $G$. Suppose also that $\alpha$ is a simple root of $G$. Then $V^B$ can be regarded as a Hecke module, and for every $v \in V^B$ we have
$$
T_{s_\alpha} v = \sum_{u \in B s_\alpha B / B} u v.
$$
\end{lemma}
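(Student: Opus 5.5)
The plan is to make the Frobenius reciprocity isomorphism $\operatorname{Hom}_G(\operatorname{Ind}_B^G\mathbf{1},V)\cong V^B$ completely explicit, transport the Hecke action $h*f=f\circ h$ across it, and then evaluate the defining formula for $T_{s_\alpha}$ on the base coset $B$. The statement is essentially a bookkeeping computation; no deep input beyond the definitions in this subsection should be needed.

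\emph{Step 1 (the explicit isomorphism).} Write $\operatorname{Ind}_B^G\mathbf{1}=\mathbb{C}[G/B]$, with $G$ acting by left translation on cosets. Define $\Phi:\operatorname{Hom}_G(\mathbb{C}[G/B],V)\to V^B$ by $\Phi(f)=f(B)$, where $B$ denotes the identity coset. This is well defined: for $b\in B$ we have $b\,f(B)=f(bB)=f(B)$, so $f(B)\in V^B$. Its inverse sends $v\in V^B$ to $f_v(xB)=xv$. This is independent of the representative $x$ precisely because $v$ is $B$-fixed, and $f_v$ is clearly $G$-equivariant. Hence $\Phi$ is a linear bijection, and the $H$-module structure on $V^B$ is \emph{defined} by $h\cdot v:=\Phi(f_v\circ h)$. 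Because $H=\operatorname{End}_G(\mathbb{C}[G/B])^{\mathrm{opp}}$, the rule $h*f=f\circ h$ is a genuine left action: $(h_1h_2)*f=f\circ h_2\circ h_1=h_1*(h_2*f)$, where $h_1h_2$ is the product in the opposite algebra, i.e. $h_2\circ h_1$. So this is a legitimate module structure. The hypothesis on principal series components is only needed so that, by the theorem of \cite{GeckJacon2011}, this module is the meaningful one; it plays no role in the formula itself.

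\emph{Step 2 (evaluation).} By the definition of $T_n$ for $n=s_\alpha$, we have $T_{s_\alpha}(B)=\sum yB$, the sum over cosets $yB$ with $y\in Bs_\alpha B$, i.e. over $Bs_\alpha B/B$. Therefore
\[
T_{s_\alpha}\cdot v=\Phi(f_v\circ T_{s_\alpha})=f_v\Bigl(\sum_{u\in Bs_\alpha B/B}uB\Bigr)=\sum_{u\in Bs_\alpha B/B}u\,v,
\]
using linearity of $f_v$ and $f_v(uB)=uv$. Each term $uv$ is independent of the chosen representative because $v\in V^B$. As a sanity check, $|Bs_\alpha B/B|=q_{s_\alpha}=q$.

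\emph{Main obstacle.} The only delicate point is conventions. One must check that the opposite-algebra convention makes $f\mapsto f\circ h$ a left action, and that the orientation in the definition of $T_n$ ($x^{-1}y\in BnB$) matches the left translation action. Otherwise one might obtain a sum over $Bs_\alpha^{-1}B/B$, which here coincides with $Bs_\alpha B/B$ anyway since $s_\alpha^2=1$. Finally, one verifies that the resulting operator preserves $V^B$. This holds because left multiplication by $b\in B$ permutes the cosets in $Bs_\alpha B/B$, so $b\sum_u uv=\sum_u uv$.
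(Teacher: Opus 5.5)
Your proposal is correct and follows essentially the paper's own argument, which the paper gives in the appendix for $V^I$ with $K/K_+$ in place of $G$: identify $v\in V^B$ with $f_v(gB)=gv$ via Frobenius reciprocity, then evaluate $f_v\circ T_{s_\alpha}$ at the base coset to obtain $\sum_{u\in Bs_\alpha B/B}uv$. Your additional checks — that the opposite-algebra convention gives a left action, that $T_n$ is $G$-equivariant, and that the result lies in $V^B$ — are correct and are left implicit in the paper.
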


Note that $B s_\alpha B = U_\alpha s_\alpha B$, and $\{ u s_\alpha \mid u \in U_\alpha \}$ is a set of coset representatives of $B s_\alpha B / B$. Hence the action of $T_{s_\alpha}$ can also be written as
$$
T_{s_\alpha} v = \sum_{u \in U_\alpha} u s_\alpha v.
$$

The above theorem tells us that we can use the representation of the Weyl group to represent the subrepresentation which lies in the Harish-Chandra principal series. And in the $GL_n$ case, we can use a partition of $n$ to represent the representations which lie in the principal Harish-Chandra series. Hence for a partition $\lambda$, we use $\pi_\lambda$ to denote the corresponding representation.

\section{Review of Zelevinsky's segments}

We recall some fundamental notions from \cite{Zelevinsky1980}. Let $\mathbf{F}$ be a non-archimedean local field and $\mathbf{G}_n = \mathrm{GL}(n,\mathbf{F})$. For representations $\pi_i \in \mathrm{Alg}\,\mathbf{G}_{n_i}$ ($i=1,\dots,r$) we denote by
\[
\pi_1 \times \cdots \times \pi_r = i_{(n_1,\dots,n_r)}(\pi_1 \otimes \cdots \otimes \pi_r)
\]
the induced representation from the parabolic subgroup of block-upper-triangular matrices; this is called the \emph{product} of the $\pi_i$.

\subsection{Segments and associated representations}
Let $\mathcal{C}$ be the set of equivalence classes of \emph{irreducible cuspidal representations} of the groups $\mathbf{G}_n$ ($n\ge 1$). A \emph{segment} in $\mathcal{C}$ is a subset of the form
\[
\Delta = \{\rho,\nu\rho,\nu^2\rho,\dots,\nu^k\rho = \rho'\},
\]
where $\rho\in\mathcal{C}$ is cuspidal, $\nu(g)=|\det g|$ and $k\ge 0$; we write $\Delta = [\rho,\rho']$. For such a segment, Zelevinsky constructs an irreducible representation $\langle\Delta\rangle\in \mathrm{Irr}\,\mathbf{G}_{km}$ (where $\rho\in\mathrm{Irr}\,\mathbf{G}_m$) as the unique irreducible submodule of $\rho\times\nu\rho\times\cdots\times\rho'$. Moreover, there is also a distinguished irreducible quotient, denoted $St(\langle\Delta\rangle)$, which is the unique irreducible quotient of the same product. By Proposition~3.4, the restriction (Jacquet module) of $\langle\Delta\rangle$ to a Levi subgroup satisfies
\[
r_{(l,n-l),(n)}(\langle\Delta\rangle) = 
\begin{cases}
0 & \text{if } l \text{ is not a multiple of } m,\\
\langle[\rho,\nu^{p-1}\rho]\rangle \otimes \langle[\nu^p\rho,\rho']\rangle & \text{if } l = mp,\; 0\le p\le k.
\end{cases}
\]

\subsection{Linked segments and irreducibility criterion}
Two segments $\Delta_1 = [\rho_1,\rho_1']$ and $\Delta_2 = [\rho_2,\rho_2']$ are called \emph{linked} if $\Delta_1\not\subset \Delta_2$, $\Delta_2\not \subset \Delta_1$ and $\Delta_1\cup\Delta_2$ is again a segment. The following criterion is Theorem~4.2 of \cite{Zelevinsky1980}:

\begin{theorem}[Zelevinsky, Theorem 4.2]
Let $\Delta_1,\dots,\Delta_r$ be segments in $\mathcal{C}$. The product $\langle\Delta_1\rangle\times\cdots\times\langle\Delta_r\rangle$ is irreducible if and only if no two of the segments $\Delta_i,\Delta_j$ are linked.
\end{theorem}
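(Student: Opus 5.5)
Since this is Theorem~4.2 of \cite{Zelevinsky1980}, the plan is to follow Zelevinsky's strategy, which works entirely at the level of Jacquet modules. Write $\pi = \langle\Delta_1\rangle\times\cdots\times\langle\Delta_r\rangle$. The basic tool is the geometric lemma (Bernstein--Zelevinsky): for a product, the Jacquet module with respect to a standard parabolic has a filtration whose graded pieces are products of the Jacquet modules of the factors. Combined with the formula for $r_{(l,n-l),(n)}(\langle\Delta\rangle)$ recalled above, this makes the full Jacquet module $r_{\min}(\pi)$ to the cuspidal Levi computable in the Grothendieck group. Each segment contributes only its ``increasing'' cuspidal ordering, and the terms of $r_{\min}(\pi)$ are indexed by shuffles of the segments $\Delta_1,\dots,\Delta_r$. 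Every irreducible subquotient $\sigma$ of $\pi$ has $r_{\min}(\sigma)\neq 0$, so $r_{\min}$ detects all constituents.

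\emph{``Only if'' direction.} Suppose $\Delta_1,\Delta_2$ are linked, say $\Delta_1$ precedes $\Delta_2$. For two segments I will exhibit two distinct irreducible subquotients of $\langle\Delta_1\rangle\times\langle\Delta_2\rangle$:
\begin{enumerate}
\item the submodule $\langle\{\Delta_1,\Delta_2\}\rangle$;
\item a constituent of $\langle\Delta_1\cup\Delta_2\rangle\times\langle\Delta_1\cap\Delta_2\rangle$, where $\langle\emptyset\rangle$ is read as $\mathbf 1$ if the segments are juxtaposed.
\end{enumerate}
To see that both occur and are distinct, I compare specific cuspidal exponents in $r_{\min}$. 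The shuffle in which $\Delta_2$ entirely precedes $\Delta_1$ is a ``decreasing'' pattern that cannot occur in the Jacquet module of $\langle\{\Delta_1,\Delta_2\}\rangle$. It does, however, occur in $r_{\min}(\langle\Delta_1\rangle\times\langle\Delta_2\rangle)$, so some other constituent must be present. For $r>2$ segments, I order them so that a linked pair $\Delta_i,\Delta_j$ is adjacent, using the fact that unlinked pairs commute (which follows from the two-segment case). Then $\pi$ surjects onto, or contains, a product having the reducible factor $\langle\Delta_i\rangle\times\langle\Delta_j\rangle$, and exactness of induction gives reducibility.

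\emph{``If'' direction.} Assume no two segments are linked. First I treat two unlinked segments. Here the shuffles appearing in $r_{\min}(\langle\Delta_1\rangle\times\langle\Delta_2\rangle)$ coincide, with multiplicities, with those of $r_{\min}(\langle\Delta_2\rangle\times\langle\Delta_1\rangle)$. Moreover, a distinguished exponent (the concatenation $\Delta_1\Delta_2$ in the ordering with ``higher'' segment first) occurs with multiplicity one. By Frobenius reciprocity, any irreducible submodule of $\langle\Delta_1\rangle\times\langle\Delta_2\rangle$ carries that exponent. Applying the same argument to the contragredient, which is the product in the reversed order with dual segments, shows that any irreducible quotient carries it too. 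Multiplicity one then forces the unique irreducible submodule to equal the unique irreducible quotient and to be the only constituent containing this exponent. A further counting argument on $r_{\min}$ shows that no other constituent exists.

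In general I will proceed by induction on $r$. I order the segments so that the product is simultaneously a standard ``submodule'' ordering and, after the commutations allowed by the two-segment case, a standard ``quotient'' ordering. I then repeat the multiplicity-one exponent argument: the irreducible submodule $\langle a\rangle$ contains the distinguished exponent with multiplicity one, and every irreducible quotient of $\pi$ must also contain it. Hence $\pi$ has $\langle a\rangle$ as its unique submodule and also as its unique quotient. To conclude that $\pi=\langle a\rangle$, I compare $r_{\min}(\pi)$ with $r_{\min}(\langle a\rangle)$, the latter bounded below via the inductive hypothesis applied to the Jacquet restrictions.

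\emph{Main obstacle.} I expect the hardest step to be this last equality: ruling out ``middle'' constituents that are neither submodule nor quotient. What I would try first is Zelevinsky's trick of showing that $\pi\cong\pi'$ for every reordering $\pi'$ of the unlinked factors. Then every constituent of $\pi$ is a submodule of some reordering and must equal $\langle a\rangle$, and the multiplicity-one count finishes the argument.
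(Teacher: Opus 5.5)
The paper gives no proof of this statement; it quotes it from Zelevinsky. Your proposal therefore has to stand on its own, and as written it has a genuine gap in each direction.

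In the ``if'' direction, your two-segment argument never uses that $\Delta_1,\Delta_2$ are unlinked, and it breaks at the contragredient step. One has $\widetilde{\pi_1\times\pi_2}\cong\tilde\pi_1\times\tilde\pi_2$ in the \emph{same} order. The reversal appears only after composing with the Gelfand--Kazhdan involution $g\mapsto {}^tg^{-1}$. What this yields is that the irreducible quotients of $\langle\Delta_1\rangle\times\langle\Delta_2\rangle$ are the irreducible submodules of $\langle\Delta_2\rangle\times\langle\Delta_1\rangle$. So quotients carry the reversed concatenation $\Delta_2\Delta_1$, not the exponent $\Delta_1\Delta_2$ carried by submodules, and nothing in your argument puts the two in the same constituent.

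Every step you list applies verbatim to the linked pair $\{\chi\},\{\chi\nu\}$, where both concatenations have multiplicity one. Yet $\chi\times\chi\nu$ is reducible: its submodule $(\chi\nu^{1/2})\circ\det$ has exponent $\chi\otimes\chi\nu$, and its quotient, a twisted Steinberg representation, has $\chi\nu\otimes\chi$. The missing idea is to show that the irreducible submodule also carries $\Delta_2\Delta_1$. This is where unlinkedness must enter, together with the rank-one theorem of Bernstein--Zelevinsky: for cuspidal $\rho,\rho'$, the product $\rho\times\rho'$ is reducible iff $\rho'\cong\nu^{\pm1}\rho$. One way is to commute cuspidal factors inside an embedding into $\rho_1\times\cdots\times\rho_n$. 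That is easy for disjoint, non-juxtaposed segments, but nested ones require a genuine induction.

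Independently, multiplicity one fails whenever a segment is repeated. In $r_{\min}(\langle\Delta\rangle\times\langle\Delta\rangle)$, swapping the two copies acts freely on shuffles, so every exponent has even multiplicity. Already $r_{\min}(\chi\times\chi)$ has semisimplification $2\,(\chi\otimes\chi)$. This case needs extra input, such as the irreducibility of $\rho\times\cdots\times\rho$.

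On the other hand, your ``main obstacle'' is not one. Suppose the unique irreducible submodule $\omega$ is also the unique irreducible quotient and occurs with multiplicity one. Then $\pi=\omega$, since otherwise $\pi/\omega$ would have an irreducible quotient isomorphic to $\omega$, giving a second copy.

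In the ``only if'' direction, the separating exponent is backwards. Use the conventions recalled in the paper and suppose $\Delta_1$ precedes $\Delta_2$. Then $\langle\{\Delta_1,\Delta_2\}\rangle$ is the unique submodule of $\langle\Delta_2\rangle\times\langle\Delta_1\rangle$. Hence it is the quotient, not the submodule, of $\langle\Delta_1\rangle\times\langle\Delta_2\rangle$, whose submodule is $\langle\Delta_1\cup\Delta_2\rangle\times\langle\Delta_1\cap\Delta_2\rangle$. By Frobenius reciprocity its Jacquet module contains exactly the decreasing exponent $\Delta_2\Delta_1$ (the Steinberg exponent $\chi\nu\otimes\chi$ above).

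What is true is that $\Delta_2\Delta_1$ does not occur in $r_{\min}(\langle\Delta_1\cup\Delta_2\rangle\times\langle\Delta_1\cap\Delta_2\rangle)$, because there the beginning of $\Delta_1$ must precede the end of $\Delta_2$. So the argument can be repaired, but only after proving that $\langle\Delta_1\cup\Delta_2\rangle\times\langle\Delta_1\cap\Delta_2\rangle$ and $\langle\Delta_1\rangle\times\langle\Delta_2\rangle$ share a constituent. One could do this by producing a nonzero intertwining map via Frobenius reciprocity and the geometric lemma. Comparing exponents in $r_{\min}$ can never show that a given irreducible representation occurs in $\pi$, and this step is absent from your proposal.

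Your reduction from $r$ segments to two is fine once the two-segment cases are in place. You need to pick a linked pair at minimal distance, so that every segment in between is unlinked with both.
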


\subsection{Classification of irreducible representations}
Let $\mathcal{O}$ be the set of all finite multisets of segments. For $a = \{\Delta_1,\dots,\Delta_r\}\in\mathcal{O}$, choose an ordering such that $\Delta_i$ does not precede $\Delta_j$ for $i<j$ (i.e. no $\Delta_i$ is linked with a later one in a way that would create a “preceding” relation). Then the product $\langle\Delta_1\rangle\times\cdots\times\langle\Delta_r\rangle$ has a unique irreducible submodule, denoted $\langle a\rangle$. Theorem~6.1 states:

\begin{theorem}[Zelevinsky, Theorem 6.1]
\begin{enumerate}
\item Let $\Delta_1,\ldots,\Delta_r$ be segments in $\mathcal{C}$. Suppose for each pair of indices $i,j$ such that $i<j$, $\Delta_i$ does not precede $\Delta_j$. Then the representation $\langle\Delta_1\rangle\times\ldots\times\langle\Delta_r\rangle$ has a unique irreducible submodule; denote it by $\langle\Delta_1,\ldots,\Delta_r\rangle$.
\item The representations $\langle\Delta_1,\ldots,\Delta_r\rangle$ and $\langle\Delta_1',\ldots,\Delta_s'\rangle$ are isomorphic if and only if the sequences $(\Delta_1,\ldots,\Delta_r)$ and $(\Delta_1',\ldots,\Delta_s')$ are equal up to a rearrangement.
\item Any irreducible representation of $\mathbf{G}_n$ is isomorphic to some representation of the form $\langle\Delta_1,\ldots,\Delta_r\rangle$.
\end{enumerate}
\end{theorem}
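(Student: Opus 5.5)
The plan is to follow Zelevinsky's strategy: work throughout with Jacquet modules, Frobenius reciprocity $\operatorname{Hom}_{\mathbf G_n}(\sigma,\pi_1\times\cdots\times\pi_r)=\operatorname{Hom}(r_{(n_1,\dots,n_r),(n)}(\sigma),\pi_1\otimes\cdots\otimes\pi_r)$, and the Bernstein--Zelevinsky geometric lemma. The geometric lemma computes the Jacquet module of a product as a filtered sum of products of Jacquet modules of the factors. The Jacquet module of a single $\langle\Delta\rangle$ is given by the formula recalled above from Proposition~3.4.

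\textbf{Part 1 (unique irreducible submodule).} Set $\Pi=\langle\Delta_1\rangle\times\cdots\times\langle\Delta_r\rangle$ and let $\alpha=(|\Delta_1|,\dots,|\Delta_r|)$, the sizes taken as degrees.
\begin{enumerate}
\item Apply the geometric lemma to $r_{\alpha,(n)}(\Pi)$. Each subquotient is indexed by a way of cutting every $\Delta_i$ into consecutive pieces according to Proposition~3.4 and redistributing those pieces among the $r$ blocks.
\item Show that the subquotient $\langle\Delta_1\rangle\otimes\cdots\otimes\langle\Delta_r\rangle$ occurs with multiplicity exactly one. This is where the hypothesis is used: because $\Delta_i$ does not precede $\Delta_j$ for $i<j$, any nontrivial redistribution changes the cuspidal support of some block, or produces a tensor factor not of the form $\langle\Delta_i\rangle$. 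Concretely, one compares the beginnings and ends of the segments block by block.
\item Conclude. Every nonzero submodule $\sigma\subset\Pi$ has, by Frobenius reciprocity, a nonzero map $r_{\alpha,(n)}(\sigma)\to\bigotimes\langle\Delta_i\rangle$, so $r_{\alpha,(n)}(\sigma)$ contains $\bigotimes\langle\Delta_i\rangle$ as a quotient. By exactness of $r$, two distinct irreducible submodules would give this term multiplicity at least $2$ in $r_{\alpha,(n)}(\Pi)$, contradicting step 2. The socle is nonzero because $\Pi$ has finite length, so it is a single irreducible.
\end{enumerate}

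\textbf{Part 2 (isomorphism criterion).} Proceed by induction on $r$. From $\langle a\rangle$ recover one distinguished segment, say the one with the largest end (choosing the longest among ties).
\begin{enumerate}
\item Use the Jacquet module with respect to $(n-|\Delta|,|\Delta|)$ and the embedding of Part 1 to show that $\langle a\rangle$ has $\langle a\setminus\{\Delta\}\rangle\otimes\langle\Delta\rangle$ as a subquotient of that Jacquet module.
\item Show that no other segment can play this extremal role. This again uses multiplicity-one bookkeeping from the geometric lemma.
\item Conclude. An isomorphism $\langle a\rangle\cong\langle b\rangle$ forces the same extremal segment, and induction handles the rest. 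The "if" direction is just the independence of the chosen admissible ordering, which also follows from Part 1.
\end{enumerate}

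\textbf{Part 3 (exhaustion).}
\begin{enumerate}
\item For irreducible $\pi$, pick a cuspidal irreducible quotient of a minimal nonzero Jacquet module. Frobenius reciprocity then gives $\pi\hookrightarrow\rho_1\times\cdots\times\rho_k$.
\item By induction on $k$, apply the statement to $\mathbf G_{n-m}$ with an irreducible $\tau$ satisfying $\pi\hookrightarrow\rho_1\times\tau$, and write $\tau=\langle b\rangle$.
\item Either $\rho_1$ extends the appropriate segment of $b$ to a longer segment, or $\rho_1$ forms a new singleton segment. One checks that $\pi=\langle a\rangle$ for the resulting multiset, using the uniqueness from Part 1 and the irreducibility criterion (Theorem~4.2) to commute unlinked factors.
\end{enumerate}

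The main obstacle I expect is step 2 of Part 1, the exact multiplicity-one count in the geometric lemma. It requires a careful combinatorial analysis of how pieces of segments may be reshuffled without changing the Jacquet-module term, and everything else depends on it. I would first try the case $r=2$ explicitly, treating linked versus unlinked pairs, and then induct on $r$ by writing $\Pi=\langle\Delta_1\rangle\times\Pi'$ and applying the geometric lemma only to that two-factor product.
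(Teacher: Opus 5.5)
The paper gives no proof of this statement; it is quoted from Zelevinsky. So I judge your proposal on its own merits, and there is a genuine gap.

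\textbf{Part 1, step 2 is false as stated.} Everything else rests on it. The hypothesis only fixes the relative order of \emph{linked} pairs. For unlinked (nested or equal) pairs either order is allowed, and then the multiplicity of $\langle\Delta_1\rangle\otimes\cdots\otimes\langle\Delta_r\rangle$ in $r_{\alpha,(n)}(\Pi)$ can exceed one even when $\Pi$ is irreducible.
\begin{itemize}
\item Take $\rho$ a character of $\mathbf G_1$ and $(\Delta_1,\Delta_2)=([\rho,\nu\rho],[\nu\rho])$. This order is admissible since $\Delta_2\subset\Delta_1$. Besides the obvious term, the geometric lemma has a term in which $\langle\Delta_1\rangle$ is cut as $\rho\otimes\nu\rho$ and $\rho$ joins $\langle\Delta_2\rangle$ in the first block. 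That term is $(\rho\times\nu\rho)\otimes\nu\rho$, and $\rho\times\nu\rho$ has $\langle[\rho,\nu\rho]\rangle$ as a constituent. So the multiplicity is $2$.
\item Repeated segments fail as well: for $\Delta_1=\Delta_2$ the anti-diagonal term reproduces $\langle\Delta\rangle\otimes\langle\Delta\rangle$.
\end{itemize}
Your proposed mechanism (``a nontrivial redistribution changes a block's support or produces a factor not of the form $\langle\Delta_i\rangle$'') misses this. In the example the block supports are unchanged, and a block factor that is a product can still contain $\langle\Delta_i\rangle$ in its Jordan--H\"older series, which is exactly what your count in step 3 sees. With multiplicity at least $2$, step 3 yields no contradiction. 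So Part 1, and with it Parts 2 and 3 which use it, is unproved.

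\textbf{A repair for Part 1.}
\begin{itemize}
\item Fix one admissible order: beginnings non-increasing along each cuspidal line.
\item Group the segments with a common beginning $b_l$ into $\pi(a_l)$, the product of the $\langle\Delta\rangle$ with $b(\Delta)=b_l$. It is irreducible by Theorem 4.2, since such segments are nested.
\item Take $\sigma=\pi(a_1)\otimes\cdots\otimes\pi(a_k)$ and $\beta=(\deg a_1,\dots,\deg a_k)$. In every geometric-lemma term the first block receives only initial pieces. A piece coming from $a_l$ with $l\ge 2$ brings $b_l\notin\operatorname{supp}(a_1)$. Hence the first block of any copy of $\sigma$ is all of $\pi(a_1)$, and induction on $k$ gives multiplicity one.
\item Your step 3 then works with $\sigma$ in place of $\bigotimes\langle\Delta_i\rangle$.
\item To pass to an arbitrary admissible order, note that two admissible orders are connected by swaps of adjacent unlinked segments, because each linked pair has a forced relative order. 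Moreover $\langle\Delta\rangle\times\langle\Delta'\rangle\cong\langle\Delta'\rangle\times\langle\Delta\rangle$ for unlinked pairs, since both are irreducible with the same image in $\mathcal R$. This also gives the ``if'' direction of (2).
\end{itemize}

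\textbf{Part 2, step 1 has the wrong orientation.} The segment $\Delta$ with the largest end may be preceded by other segments, so in an admissible order it must come \emph{first}, not last. For example, take $a=\{[\rho,\nu\rho],[\nu\rho,\nu^2\rho]\}$. Then $r_{(2,2),(4)}\langle a\rangle$ does not contain $\langle[\rho,\nu\rho]\rangle\otimes\langle[\nu\rho,\nu^2\rho]\rangle$. Both copies of it in $r_{(2,2),(4)}(\pi(a))$ lie in the other constituent $\langle[\rho,\nu^2\rho]\rangle\times\nu\rho$. The correct version uses $r_{(|\Delta|,n-|\Delta|),(n)}$ and the factor $\langle\Delta\rangle\otimes\langle a\setminus\{\Delta\}\rangle$.

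Beyond that, the real content of (2) and (3) is only asserted: why no other segment can be extremal, and which segment $\rho_1$ attaches to.
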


\subsection{Decomposition numbers and partial order}
For $a,b\in\mathcal{O}$, let $m(b;a)$ be the multiplicity of $\langle b\rangle$ in the Jordan–Hölder series of $\pi(a)=\langle\Delta_1\rangle\times\cdots\times\langle\Delta_r\rangle$. An \emph{elementary operation} on a multiset $a$ consists of replacing two linked segments $\Delta,\Delta'$ by $\Delta\cup\Delta'$ and $\Delta\cap\Delta'$ (the latter is dropped if empty). Write $b\le a$ if $b$ can be obtained from $a$ by a sequence of elementary operations. Theorem~7.1 gives:

\begin{theorem}[Zelevinsky, Theorem 7.1]
$m(b;a)\neq 0$ if and only if $b\le a$. Moreover $m(a;a)=1$ for every $a\in\mathcal{O}$.
\end{theorem}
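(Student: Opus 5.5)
The plan splits the statement into three parts: $m(a;a)=1$, the implication $b\le a\Rightarrow m(b;a)\neq0$, and the converse. Throughout I work in the Grothendieck group $R=\bigoplus_n R(\mathbf{G}_n)$. Since parabolic induction is exact, the product $\times$ makes $R$ a commutative ring, and I write $\pi\ge\pi'$ when $\pi-\pi'$ is a non-negative combination of irreducibles.

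\emph{Step 1: the two-segment case.} First I would prove that for linked segments $\Delta,\Delta'$ the product $\langle\Delta\rangle\times\langle\Delta'\rangle$ has length exactly two. Its composition factors should be $\langle\Delta,\Delta'\rangle$ and $\langle\Delta\cup\Delta',\Delta\cap\Delta'\rangle$, each with multiplicity one. The tool is the Jacquet module formula for $\langle\Delta\rangle$ recalled above, together with the geometric lemma. These let me compute $r(\langle\Delta\rangle\times\langle\Delta'\rangle)$ down to the minimal Levi, where it is a sum of characters of the cuspidal support in explicit orders. I then compare this with the Jacquet modules of the two candidate irreducibles; the second one is irreducible by Theorem 4.2, since $\Delta\cup\Delta'$ and $\Delta\cap\Delta'$ are not linked. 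Frobenius reciprocity shows that both candidates actually occur.

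\emph{Step 2: $b\le a$ implies $m(b;a)\ne0$.} Suppose $b$ is obtained from $a$ by one elementary operation on $\Delta,\Delta'$. By Step 1 and exactness of products,
\[
\pi(a)=\langle\Delta\rangle\times\langle\Delta'\rangle\times(\text{rest})\ \ge\ \langle\Delta\cup\Delta'\rangle\times\langle\Delta\cap\Delta'\rangle\times(\text{rest})=\pi(b)
\]
in $R$, since the middle factor $\langle\Delta\cup\Delta',\Delta\cap\Delta'\rangle$ is the irreducible product $\langle\Delta\cup\Delta'\rangle\times\langle\Delta\cap\Delta'\rangle$. Hence $m(c;b)\le m(c;a)$ for every $c$. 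Iterating along a chain of elementary operations gives $m(b;a)\ge m(b;b)$. It therefore remains only to show $m(b;b)\ge1$, which is immediate because $\langle b\rangle$ is by definition a submodule of $\pi(b)$.

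\emph{Step 3: the converse and $m(a;a)=1$.} This is the main obstacle. The approach I would try first is a leading-term argument on Jacquet modules. Order the segments of $a$ so that none precedes a later one. Then $\langle a\rangle\hookrightarrow\pi(a)$ gives, by Frobenius reciprocity, a distinguished irreducible constituent $\sigma_a$ of the Jacquet module $r_{(m_1,\dots,m_r)}(\langle a\rangle)$, namely $\sigma_a=\langle\Delta_1\rangle\otimes\cdots\otimes\langle\Delta_r\rangle$ or a refinement of it down to the cuspidal level.

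Next I would use the geometric lemma to list all constituents of $r(\pi(a))$. Each is indexed by a way of cutting every segment of $a$ into pieces and redistributing the pieces. For a suitable lexicographic order on the blocks (for instance, first by the largest end-point and then by length), I want the following: $\sigma_a$ occurs in $r(\pi(a))$ exactly once, and $\sigma_b$ can occur in $r(\pi(a))$ only if $b$ arises from $a$ by unions and intersections of segments, i.e.\ $b\le a$.

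Exact multiplicity one of $\sigma_a$ in $r(\pi(a))$ immediately gives $m(a;a)=1$. For the converse direction, if $\langle b\rangle$ is a constituent of $\pi(a)$, then $\sigma_b$ is a constituent of $r(\pi(a))$, and the combinatorial claim then forces $b\le a$.

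The delicate part is this combinatorial claim. If the direct comparison fails, my fallback is induction on $n$ via Bernstein–Zelevinsky derivatives, or via removing the segment with the maximal end-point. The idea would be to restrict to the corresponding Jacquet functor, reduce to a smaller multiset, and show that the resulting order relation lifts back to $b\le a$.
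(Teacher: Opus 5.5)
The paper gives no proof of this statement; it quotes Zelevinsky's Theorem 7.1, so your argument has to stand on its own.

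\emph{Steps 1 and 2.} Step 2 is the standard proof of the \emph{if} direction and is correct once Step 1 is available. Step 1 is Zelevinsky's Proposition 4.6, and you should cite it: the one-line appeal to Frobenius reciprocity does not by itself show that $\langle\Delta\cup\Delta'\rangle\times\langle\Delta\cap\Delta'\rangle$ is a subquotient.

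\emph{First gap: multiplicity one.} Step 3 does not work as written. The claim that $\sigma_a$ occurs exactly once in $r(\pi(a))$ is false.
For $a=\{\Delta,\Delta\}$ the product $\pi(a)$ is irreducible, yet $\langle\Delta\rangle\otimes\langle\Delta\rangle$ occurs twice in its Jacquet module along the Levi of $\pi(a)$. Already $r(\chi\times\chi)=2\,\chi\otimes\chi$.
The cuspidal-level refinement fails even without repeated segments. Take $a=\{[\chi,\nu\chi],[\nu\chi]\}$. Then $\pi(a)$ is irreducible, but $\chi\otimes\nu\chi\otimes\nu\chi$ occurs twice in its minimal Jacquet module.
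Even at the segment level the choice of order matters. With the order $([\chi,\nu\chi],[\nu\chi])$, which the non-preceding rule permits, $\langle[\chi,\nu\chi]\rangle\otimes\nu\chi$ occurs twice in $r_{(2,1)}(\pi(a))$, because $\chi\times\nu\chi$ contains $\langle[\chi,\nu\chi]\rangle$.
To get $m(a;a)=1$ from a leading term, you must group equal segments into the irreducible blocks $\langle\Delta\rangle^{\times k}$. You must also order the distinct segments by decreasing end, shorter first among equal ends. Then any lowest piece containing the maximal end point is a whole segment ending there. The shorter-first rule forces these to be exactly the copies of the first segment, filling the first block, and induction gives multiplicity one.

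\emph{Second gap: the converse.} Your second claim carries the whole \emph{only if} direction, and it is only announced. When $\sigma_b$ occurs in $r(\pi(a))$, the geometric lemma (together with cuspidal supports of products) gives a cut-and-tile datum. Each segment of $a$ is cut into consecutive pieces, which are sent in increasing order to the blocks of $b$, and each segment of $b$ is tiled by the pieces it receives.
This is not ``arising by unions and intersections'', since elementary operations never cut a segment. Without an ordering constraint it does not imply $b\le a$. For example, $a=\{[\chi,\nu\chi]\}$ and $b=\{[\chi],[\nu\chi]\}$ with blocks $([\chi],[\nu\chi])$ form a tiling, yet no elementary operation applies to $a$.
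You need a real argument that, with the blocks of $b$ in decreasing order of ends, a tiling forces $b\le a$. Here is one way.
\begin{itemize}
\item Fix an interval $[i,j]$. Send each segment of $a$ containing $[i,j]$ to the block receiving its piece through $i$.
\item That block contains $[i,j]$: the segment's later pieces lie in later blocks, whose ends are no larger, so the block's end is at least the segment's end.
\item The assignment is injective, because pieces within one block are disjoint.
\item Hence $\#\{\Delta'\in b:\Delta'\supseteq[i,j]\}\ge\#\{\Delta\in a:\Delta\supseteq[i,j]\}$ for all $[i,j]$.
\end{itemize}
Even then you must invoke the nontrivial fact that these rank inequalities characterize the Zelevinsky order. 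None of this is in the proposal, and the derivative fallback is only named. As it stands, both the converse and $m(a;a)=1$ remain unproved.
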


\subsection{Duality and non-degenerate representations}
There is an involutive automorphism $\omega\mapsto St(\omega)$ of the Grothendieck ring $\mathcal{R}$ (the representation bialgebra) uniquely determined by sending $\langle\Delta\rangle$ to $St(\langle\Delta\rangle)$ for every segment $\Delta$. This duality exchanges submodules and quotients. For non-degenerate representations, Theorem~9.7 gives a classification in terms of the representations $St(\langle\Delta\rangle)$:

\begin{theorem}[Zelevinsky, Theorem 9.7]
\begin{enumerate}
\item For any $a=\{\Delta_1,\dots,\Delta_r\}\in\mathcal{O}$, the product $St(\langle\Delta_1\rangle)\times\cdots\times St(\langle\Delta_r\rangle)$ is non-degenerate. It is irreducible if and only if no two of the segments $\Delta_1,\dots,\Delta_r$ are linked.
\item Every irreducible non-degenerate representation $\omega$ of $\mathbf{G}_n$ can be written uniquely as a product $\omega = St(\langle\Delta_1\rangle)\times\cdots\times St(\langle\Delta_r\rangle)$ where the segments $\Delta_i$ are pairwise not linked.
\end{enumerate}
\end{theorem}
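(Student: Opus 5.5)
The statement is Theorem~9.7 of \cite{Zelevinsky1980}, and I would follow Zelevinsky's route through derivatives and the duality $St$. There are three steps: non-degeneracy of the product, the irreducibility criterion, and the uniqueness of the presentation.

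\textbf{Step 1: non-degeneracy.} I would use the Bernstein--Zelevinsky theory of derivatives. A representation $\omega$ of $\mathbf{G}_n$ is non-degenerate exactly when its highest derivative $\omega^{(n)}$ is nonzero; for irreducible $\omega$ this means $\omega^{(n)}=\mathbf{1}$, which is one-dimensional.
\begin{enumerate}
\item For a segment $\Delta=[\rho,\nu^k\rho]$ with $\rho\in\mathrm{Irr}\,\mathbf{G}_m$, I would compute the derivatives of $St(\langle\Delta\rangle)$. They are $St(\langle[\nu^{j}\rho,\nu^k\rho]\rangle)$ in degrees $jm$, and zero in all other degrees.
\item In particular the top derivative, of degree $(k+1)m$, is the trivial representation of $\mathbf{G}_0$, so $St(\langle\Delta\rangle)$ is non-degenerate.
\item The Leibniz rule for derivatives of a product gives
\[
(\pi_1\times\cdots\times\pi_r)^{(n)}=\pi_1^{(n_1)}\otimes\cdots\otimes\pi_r^{(n_r)}\neq 0 .
\]
\end{enumerate}
This proves that $St(\langle\Delta_1\rangle)\times\cdots\times St(\langle\Delta_r\rangle)$ is non-degenerate. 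It also shows that this product has exactly one non-degenerate irreducible subquotient, counted with multiplicity one, since the highest derivative is exact and here equals $\mathbf{1}$.

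\textbf{Step 2: irreducibility criterion.} Since $St$ is a ring automorphism of $\mathcal{R}$, in the Grothendieck ring
\[
St(\langle\Delta_1\rangle)\times\cdots\times St(\langle\Delta_r\rangle)=St\bigl(\langle\Delta_1\rangle\times\cdots\times\langle\Delta_r\rangle\bigr).
\]
The argument then needs two facts.
\begin{enumerate}
\item $St$ sends irreducible classes to irreducible classes.
\item $St$ preserves the length (number of composition factors) of a class.
\end{enumerate}
Given these, the irreducibility question transfers directly to $\langle\Delta_1\rangle\times\cdots\times\langle\Delta_r\rangle$, and Zelevinsky's Theorem~4.2 finishes it: the product is irreducible if and only if no two of the $\Delta_i$ are linked. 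I expect this to be the main obstacle. Zelevinsky's own treatment of the duality is partly conjectural, and the fact that $St$ maps irreducibles to irreducibles was established later by Aubert and by M\oe glin--Waldspurger.

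If I want to avoid that input, I would argue directly with Step~1 in two directions.
\begin{enumerate}
\item If two segments are linked, I would exhibit an explicit degenerate subquotient. For two linked segments, $St(\langle\Delta_1\cup\Delta_2\rangle)$ and $St(\langle\Delta_1\cap\Delta_2\rangle)$ arise from the standard exact sequence, and the other constituent is degenerate. Hence the product is reducible.
\item If no two segments are linked, I would compare Jacquet modules with those of $\langle\Delta_1\rangle\times\cdots\times\langle\Delta_r\rangle$, using the formula for $r_{(l,n-l),(n)}$ recalled above.
\end{enumerate}

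\textbf{Step 3: classification.} Let $\omega$ be an irreducible non-degenerate representation.
\begin{enumerate}
\item By Theorem~6.1, $\omega\cong\langle a\rangle$ for some multiset $a$; equivalently, $\omega=St(\langle b\rangle)$ for some multiset $b$, and $\omega$ has a cuspidal support.
\item Group that support into maximal chains $\rho,\nu\rho,\dots$. Among the multisets with this support, choose the unique one $b_0=\{\Delta_1,\dots,\Delta_r\}$ with pairwise non-linked segments.
\item The product $St(\langle\Delta_1\rangle)\times\cdots\times St(\langle\Delta_r\rangle)$ is irreducible by Step~2 and non-degenerate by Step~1.
\item Every representation with this cuspidal support is a subquotient of a single product of cuspidals. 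By the multiplicity-one statement in Step~1, that product has only one non-degenerate irreducible subquotient, so $\omega$ equals $St(\langle\Delta_1\rangle)\times\cdots\times St(\langle\Delta_r\rangle)$.
\end{enumerate}
Uniqueness of the $\Delta_i$ follows from part~2 of Theorem~6.1 applied after $St$: non-linked multisets with the same product coincide.
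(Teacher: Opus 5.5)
The paper does not prove this statement; it quotes it from Zelevinsky, who takes it from Bernstein--Zelevinsky's work on non-degenerate representations. So the comparison is with that source. Your argument is correct modulo the standard inputs you name, and it takes a partly different route.

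\textbf{What follows the source.} Step~1 is the Bernstein--Zelevinsky method. You compute the derivatives of $St(\langle\Delta\rangle)$, apply the Leibniz rule, and combine exactness of the highest derivative with uniqueness of Whittaker functionals. This gives exactly one non-degenerate constituent, with multiplicity one.

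\textbf{Existence in Step~3.} Your existence argument uses Step~1 cleanly. Both $\omega$ and the irreducible product $St(\langle\Delta_1\rangle)\times\cdots\times St(\langle\Delta_r\rangle)$ are non-degenerate constituents of $\rho_1\times\cdots\times\rho_k$. That product is covered by Step~1 because $\rho_i=St(\langle\{\rho_i\}\rangle)$.

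You should still justify the combinatorial fact you use there: on each cuspidal line, a pairwise non-linked multisegment with given support exists and is unique. The proof is short. The maximal segments of such a family are pairwise separated by gaps, so they are exactly the connected components of the support. Remove one copy of each and induct. Hence the segments are the connected components of the set of points of multiplicity at least $k$, for $k=1,2,\dots$.

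This fact also yields uniqueness in part~2 directly from the cuspidal support, without passing through $St$ and Theorem~6.1, though your $St$-argument also works. The appeal to Theorem~6.1 in the first item of Step~3 is not needed.

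\textbf{What differs: Step~2.} Here you transport Theorem~4.2 through the involution, using the later theorem that $St$ maps irreducibles to irreducibles (Aubert, Schneider--Stuhler). This is legitimate and not circular, since those proofs do not use the classification of generic representations. It is also the same input the paper itself invokes in Section~6. It buys a two-line proof of the irreducibility criterion. The cited source had to obtain the criterion without it, because that property of the involution was only conjectural in 1980.

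\textbf{Your fallback sketch.} The sketch for avoiding that input is not a proof as written. For ``pairwise non-linked $\Rightarrow$ irreducible'', comparing Jacquet modules with those of $\langle\Delta_1\rangle\times\cdots\times\langle\Delta_r\rangle$ does not by itself bound the number of constituents. That is exactly where the real work lies without the duality theorem.

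The converse direction can be completed in three steps. First, prove the two-segment statement. Second, move the linked pair to the front, which is allowed because $\mathcal{R}$ is commutative. Third, note via the Leibniz rule that a product with a degenerate factor is degenerate, so the full product has a degenerate constituent besides its non-degenerate one. Since you offer this only as an alternative, it does not affect your main argument.
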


\section{Key Lemma}
Suppose that $F$ is a non-Archimedean local field of characteristic $0$, and let $G = GL_n(F)$. Let $K = GL_n(\mathfrak{o})$ be its hyperspecial subgroup, and $K_+$ its pro-unipotent radical. Let $I$ be its standard Iwahori subgroup. Mishra and Rösner established an isomorphism \cite{MishraRoesner2017}.

\begin{theorem}
Suppose that $P = MN$ is a standard parabolic subgroup of $G$, and $(\pi, V)$ is a representation of $M$. Then there is a natural isomorphism (as representations of $K/K_+$):
$$
\operatorname{Ind}_P^G(V)^{K_+}  \cong \operatorname{Ind}_{(P \cap K)/(P \cap K_+)}^{K/K_+} (V^{M \cap K_+}).
$$
\end{theorem}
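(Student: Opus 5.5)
We construct the isomorphism explicitly and check that it is well defined, $K$-equivariant and bijective. Realize $\operatorname{Ind}_P^G(V)$ as the space of smooth functions $f:G\to V$ with $f(pg)=\delta_P^{1/2}(p)\pi(p)f(g)$ for $p=mu\in P$, where the modulus character is absorbed into the action of $M$ if one uses normalized induction. The first ingredient is the Iwasawa decomposition $G=PK$. Restriction to $K$ identifies $\operatorname{Ind}_P^G(V)$, as a $K$-representation, with
$$\operatorname{Ind}_{P\cap K}^K(V|_{P\cap K}),$$
namely smooth $f:K\to V$ with $f(pk)=\pi(p)f(k)$ for $p\in P\cap K$. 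Since $P\cap K$ is compact, $\delta_P$ is trivial on it, so the normalization plays no role here.

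Next take $K_+$-invariants. Because $K_+$ is normal in $K$, a function $f$ is $K_+$-fixed exactly when $f(kx)=f(k)$ for all $x\in K_+$, which is the same as $f(xk)=f(k)$. Hence $f$ factors through $K/K_+$. For $y\in P\cap K_+$ we then get
$$\pi(y)f(k)=f(yk)=f(k),$$
so $f$ takes values in $V^{P\cap K_+}$. Write $P\cap K_+=(M\cap K_+)(N\cap K_+)$; this Iwahori-type factorization holds for the standard parabolic and the principal congruence subgroup. Since $N$ acts trivially on $V$, we have $V^{P\cap K_+}=V^{M\cap K_+}$. The group $(P\cap K)/(P\cap K_+)\cong (M\cap K)/(M\cap K_+)$ acts on this space because $M\cap K_+$ is normal in $M\cap K$.

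It follows that $f\mapsto \bar f$ maps $\operatorname{Ind}_P^G(V)^{K_+}$ into
$$\operatorname{Ind}_{(P\cap K)/(P\cap K_+)}^{K/K_+}\bigl(V^{M\cap K_+}\bigr).$$
This map is injective, since it is restriction to $K$ followed by descent to $K/K_+$. It is $K$-equivariant, because both actions are right translation.

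For surjectivity, start from a function $\bar f:K/K_+\to V^{M\cap K_+}$ with the equivariance property. Pull it back to $K$ and extend to $G$ by $f(pk)=\delta_P^{1/2}(p)\pi(p)\bar f(k)$. This is well defined provided it agrees on overlaps $pk=p'k'$, i.e. $p^{-1}p'\in P\cap K$. That compatibility is exactly the equivariance of $\bar f$ under $P\cap K$, and it uses that $P\cap K$ maps onto $(P\cap K)/(P\cap K_+)$ with kernel acting trivially on the values. The extension is smooth since it is right $K_+$-invariant and $K_+$ is open.

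The only point needing genuine care is the factorization $P\cap K_+=(M\cap K_+)(N\cap K_+)$ together with $(P\cap K)/(P\cap K_+)\cong (M\cap K)/(M\cap K_+)$. Both follow from block-matrix decomposition of elements of $GL_n(\mathfrak{o})$ congruent to $1$ mod $\mathfrak{p}$. Naturality is clear, since the construction uses only restriction and descent.
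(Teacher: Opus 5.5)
The paper gives no proof of this statement; it quotes it from Mishra--R\"{o}sner. Your argument is a correct, self-contained proof for $GL_n$. You restrict along the Iwasawa decomposition $G=PK$, descend along $K\to K/K_+$, and invert via $f(pk)=\delta_P^{1/2}(p)\pi(p)\bar f(k)$. Here the two structural inputs, $G=PK$ and $P\cap K_+=(M\cap K_+)(N\cap K_+)$, are elementary block-matrix facts. Compared with the citation, your route gives an explicit description of the isomorphism as ``restrict to $K$, then descend''. That is the model the paper tacitly uses in Lemma 4.2 when it evaluates $f$ at the Weyl representatives $w^{-1}$. The citation, in turn, covers more general groups, where those inputs come from Bruhat--Tits theory.

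One sentence in your write-up is false and should be fixed: $(P\cap K)/(P\cap K_+)$ is not isomorphic to $(M\cap K)/(M\cap K_+)$. Reduction mod $\mathfrak{p}$ identifies the former with the finite parabolic $\bar P=\bar M\bar N\subset GL_n(\mathbb{F}_q)$ and the latter with its Levi $\bar M$. Their orders differ by the factor $|\bar N|$, which is nontrivial whenever $P\neq G$. What is true is that $P\cap K=(M\cap K)(N\cap K)$, so there is a surjection $\bar P\to\bar M$ with kernel $\bar N\cong (N\cap K)/(N\cap K_+)$. The group $(P\cap K)/(P\cap K_+)$ acts on $V^{M\cap K_+}$ by inflation along this surjection:
\begin{itemize}
\item[(i)] $M\cap K$ preserves $V^{M\cap K_+}$ by normality;
\item[(ii)] $N\cap K$ acts trivially because $N$ does;
\item[(iii)] $P\cap K_+$ acts trivially because it projects into $M\cap K_+$.
\end{itemize}
This is all your argument actually uses, so the proof goes through once the sentence is corrected. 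The distinction is not cosmetic, though. Inducing from $\bar P$ with $\bar N$ acting trivially is exactly Harish-Chandra induction from $\bar M$, and that is what Lemma 4.2 and Section 5 need.
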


We should make a comment here: since we will restrict the representation to a compact subgroup, normalization does not matter for this result.

Here comes the key lemma of this article. Even though this lemma can be adapted to a larger setting, for simplicity we just consider $GL_n(\mathbb{F}_q)$. The isomorphism at the vector space level is quite clear, but the isomorphism between Hecke algebra modules is quite technical.

\begin{lemma}
Suppose that $n=n_1+n_2+\cdots+n_r$, and $\rho_i$ is a representation of $GL_{n_i}(\mathbb{F}_q)$ which lies in the principal Harish-Chandra series. Suppose that $B$ is the standard Borel subgroup of $GL_n(\mathbb{F}_q)$, and $B_i$ is the standard Borel subgroup of $GL_{n_i}(\mathbb{F}_q)$. Let $M=GL_{n_1}\times GL_{n_2}\times\cdots\times GL_{n_r}$ be the standard Levi of $GL_n$. Also assume the corresponding standard parabolic subgroup is $P$. Suppose that the finite Hecke algebra of $GL_n$ is $H_n$, and the corresponding parabolic sub-Hecke-algebra of $M$ is $H_M$. Then we have an $H_n$-module isomorphism:
$$
\operatorname{Ind}_{P}^{GL_n}(\rho_1\otimes\rho_2\otimes\cdots\otimes\rho_r)^B \cong H_n \otimes_{H_M}\bigl(\rho_1^{B_1}\otimes \rho_2^{B_2}\otimes\cdots\otimes\rho_r^{B_r}\bigr),
$$
where the left-hand side has an $H_n$-action via right translation, and the right-hand side has a natural left $H_n$-action.
\end{lemma}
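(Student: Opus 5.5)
Write $\rho=\rho_1\otimes\cdots\otimes\rho_r$, viewed as a representation of $P$ trivial on the unipotent radical $N$, and set $W_M=S_{n_1}\times\cdots\times S_{n_r}$, with $J\subseteq S$ the corresponding simple reflections. The plan is to write down an explicit linear isomorphism using the Bruhat decomposition $G=\bigsqcup_{x\in Y_J^{-1}}BxP$. Then I will check compatibility with the generators $T_s$ by comparing with the formulas in the Lemma on the action of generators on induced modules.

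\emph{Step 1: the vector space isomorphism.} A $B$-fixed $f\in\operatorname{Ind}_P^G(\rho)$ is determined by its values $f(x^{-1})$, where $x$ runs over the distinguished representatives $Y_J$ of $W/W_M$; equivalently, by its restrictions to the double cosets $Bx^{-1}P$. The value $f(x^{-1})$ must be fixed by $P\cap x B x^{-1}$. Since $x\in Y_J$, we have $x(\Phi^+_M)\subseteq\Phi^+$, so $P\cap xBx^{-1}\supseteq B_M=B_1\times\cdots\times B_r$, and modulo $N$ it is exactly $B_M$. Hence the condition is precisely $f(x^{-1})\in\rho^{B_M}=\rho_1^{B_1}\otimes\cdots\otimes\rho_r^{B_r}$. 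Conversely, any choice of such values extends uniquely. This gives a linear isomorphism
\[
\Phi:\bigoplus_{x\in Y_J}\rho^{B_M}\longrightarrow \operatorname{Ind}_P^G(\rho)^B .
\]
Its inverse sends $T_x\otimes v$ to $f_{x,v}$, the function supported on $Bx^{-1}P$ with $f_{x,v}(x^{-1})=v$, suitably normalized. The freeness of $H_n$ over $H_M$ with basis $\{T_x\}_{x\in Y_J}$ identifies the source with $H_n\otimes_{H_M}\rho^{B_M}$. I will also check that the $H_M$-action on $\rho^{B_M}$ is the tensor product of the $H_{n_i}$-actions on $\rho_i^{B_i}$, which is immediate from the Lemma giving $T_{s_\alpha}v=\sum_{u\in U_\alpha}us_\alpha v$.

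\emph{Step 2: compatibility with $T_s$.} By the same Lemma, $T_s$ acts on $V^B$ by $v\mapsto\sum_{u\in U_\alpha}us_\alpha v$, where $V=\operatorname{Ind}_P^G(\rho)$ and $G$ acts by right translation. I then compute $(T_s f_{x,v})(g)$ on each double coset and apply Deodhar's lemma, which splits the computation into three cases.
\begin{itemize}
\item If $sx\in Y_J$ and $\ell(sx)>\ell(x)$, the translates $us_\alpha$ move the support $Bx^{-1}P$ onto $B(sx)^{-1}P$ bijectively, giving $f_{sx,v}$.
\item If $sx=xu$ with $u\in J$, the element $s_\alpha$ is carried through $x^{-1}$ to the simple reflection $u$ of $M$, and $U_\alpha$ is carried to $U_{\beta}$ with $\beta$ the corresponding simple root of $M$. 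The sum becomes $\sum_{u'\in U_\beta}u' s_\beta v$, which is the action of $T_u$ on $\rho^{B_M}$, giving $f_{x,T_uv}$.
\item If $sx\in Y_J$ and $\ell(sx)<\ell(x)$, one element of $U_\alpha$ (the identity) lands in $B(sx)^{-1}P$ and the remaining $q-1$ land back in $Bx^{-1}P$. This reproduces $q\,T_{sx}\otimes v+(q-1)T_x\otimes v$ after using the quadratic relation, or equivalently a direct count.
\end{itemize}

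\emph{Main obstacle.} I expect the bookkeeping of left versus right conventions to be the hard part. This includes whether one should use $x$ or $x^{-1}$, and whether $BxP$ or $Px^{-1}B$ is the natural coset. It also includes the normalization of $f_{x,v}$ and the direction in which $U_\alpha$ conjugates, which together are needed to get exactly the scalars $q$ and $q-1$ in the third case rather than their transposes. I would first fix conventions by checking the case $n=2$, $r=2$, $n_1=n_2=1$ (the principal series of $GL_2(\mathbb{F}_q)$) by hand. I would then choose the normalization $f_{x,v}$ supported on $Bx^{-1}P$ accordingly, possibly rescaling by powers of $q$ if necessary. Since the $T_s$ generate $H_n$ and $\Phi$ is bijective, compatibility on generators completes the proof.
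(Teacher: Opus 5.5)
Your proposal follows the paper's proof. It uses the same map $f\mapsto\sum_{x\in Y_J}T_x\otimes f(x^{-1})$, and your $\Phi$, with $T_x\otimes v\mapsto f_{x,v}$, is its inverse. You also have the same check that $f(x^{-1})\in\rho^{B_M}$ via $xB_Mx^{-1}\subseteq B$, and the same three cases from Deodhar's lemma; your second case is exactly the paper's Case~1.

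However, the conventions you flagged are not just cosmetic: as written, several statements are false.
\begin{itemize}
\item $G$ acts by right translation, so elements of $\operatorname{Ind}_P^G(\rho)^B$ are left $P$-equivariant and right $B$-invariant. The relevant double cosets are therefore $Px^{-1}B$ with $x\in Y_J$, not $Bx^{-1}P$. A function supported on $Bx^{-1}P$ is in general not in the space.
\item $\bigsqcup_{x\in Y_J^{-1}}BxP$ is not a valid decomposition, because $Y_J^{-1}$ is generally not a transversal of $W/W_M$. For example, in $S_3$ with $J=\{s_1\}$, the set $Y_J^{-1}$ contains both $s_2$ and $s_2s_1$.
\item The value $f(x^{-1})$ must be fixed by $P\cap x^{-1}Bx$, not $P\cap xBx^{-1}$. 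The former contains $B_M$ precisely because $x(\Phi_M^+)\subseteq\Phi^+$. Your stated inclusion $P\cap xBx^{-1}\supseteq B_M$ does not follow from $x\in Y_J$.
\end{itemize}

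Once these are fixed, take $f_{x,v}$ supported on $Px^{-1}B$ with $f_{x,v}(x^{-1})=v$. All three cases then come out exactly, with no rescaling; a non-uniform rescaling would in fact break your first case.

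In the third case your count is misattributed. The term $u=1$ evaluates $f_{x,v}$ at $x^{-1}s_\alpha\in P(sx)^{-1}B$, which lies outside the support, and the other $q-1$ terms give $v$. That computes the value at $x^{-1}$, i.e.\ the coefficient $q-1$ of $T_x$, not anything about $T_{sx}$. The coefficient $q$ of $T_{sx}$ comes from the value at $(sx)^{-1}=x^{-1}s_\alpha$, where all $q$ points $x^{-1}s_\alpha u s_\alpha$ lie in $Bx^{-1}$.

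Your quadratic-relation shortcut avoids this bookkeeping correctly. By the first case $f_{x,v}=T_sf_{sx,v}$, and then you apply $T_s^2=(q-1)T_s+q$. This is equivalent to the paper's joint treatment of the pair $\{w,s_\alpha w\}$ in its Case~2.
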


\begin{proof}
Let us denote the Weyl group of $GL_n$ by $W$, and the Weyl group of $M$ by $W_M$. 
Let $X$ be the set of distinguished coset representatives of the left coset $W/W_M$. Denote by $J$ the subset of simple roots corresponding to $M$.

Define a map $\phi$ by $\phi(f)=\sum_{w\in X} T_w\otimes f(w^{-1})$ for $f$ in the left-hand side.

\paragraph{Step 1: $\phi$ is well-defined.}
We only need to show that $f(w^{-1})\in \rho_1^{B_1}\otimes \rho_2^{B_2}\otimes\cdots\otimes\rho_r^{B_r}$. For an arbitrary element $b\in B_1\times B_2\times\cdots\times B_r$, we have $b f(w^{-1})=f(b w^{-1})=f(w^{-1} w b w^{-1})$. Note that by the definition of $X$, we have $\ell(ws)=\ell(w)+1$ for every $w\in X$ and $s\in J$. Hence $w(J)>0$, and therefore $w(B_1\times B_2\times\cdots\times B_r)w^{-1}\subset B$. Consequently $b f(w^{-1})=f(w^{-1} w b w^{-1})=f(w^{-1})$. So $\phi$ is well-defined. Using the above argument, we can also see a useful observation: if $w \in X$, then $f$ is constant on $Bw^{-1}B$. Suppose that $P=MN$, and $b \in B \subset P$ has Levi decomposition $b=vu$, where $u \in M \cap B$, and $v \in N$. Then for any $b' \in B$, we have $f(bw^{-1}b')=f(vuw^{-1}b')=f(w^{-1}wuw^{-1}b')=f(w^{-1})$.

\paragraph{Step 2: $\phi$ is an isomorphism of vector spaces.}
This is a standard argument using Mackey's theorem. To see this, one should be aware of a standard fact about Hecke algebras: for every $H_M$-module $V$, as vector spaces,
$$
H_n \otimes_{H_M} V \cong \bigoplus_{x\in X} T_x \otimes_{H_M} V.
$$

\paragraph{Step 3: $\phi$ is an $H_n$-module homomorphism.}
For every $w\in X$ and a simple root $\alpha$, according to Deodhar's lemma there are three cases:
\begin{enumerate}
\item $s_\alpha w \notin X$ and there exists $\beta\in J$ such that $s_\alpha w = w s_\beta$;
\item $s_\alpha w \in X$ and $\ell(s_\alpha w)=\ell(w)+1$;
\item $s_\alpha w \in X$ and $\ell(s_\alpha w)=\ell(w)-1$.
\end{enumerate}
We have
$$
\phi(T_{s_\alpha}f)=\sum_{x\in X}\sum_{u\in U_{\alpha}} T_x \otimes f(x^{-1} u s_\alpha).
$$

\paragraph{Case 1.} Since $w\in X$ and $\beta\in J$, we have $\ell(w s_\beta)=\ell(w)+1$, which means $w(\beta)>0$, and because $w s_\beta w^{-1}=s_{w(\beta)}=s_\alpha$, we deduce $w(\beta)=\alpha$. The coefficient on the left-hand side corresponding to $T_w$ is
$$
\sum_{u\in U_\alpha} f(w^{-1} u s_\alpha)
$$
$$
= \sum_{u\in U_\alpha} f(w^{-1} u w w^{-1} s_\alpha)
$$
$$
= \sum_{v\in U_{w^{-1}(\alpha)}} f(v w^{-1} s_\alpha)
$$
$$
= \sum_{v\in U_\beta} f(v w^{-1} s_\alpha)
$$
$$
= \sum_{v\in U_\beta} f(v s_\beta w^{-1})
$$
$$
= \sum_{v\in U_\beta} v s_\beta f(w^{-1})
$$
$$
= T_{s_\beta} f(w^{-1}).
$$

Notice that
$$
T_{s_\alpha} T_w \otimes f(w^{-1}) = T_w \otimes T_{s_\beta} f(w^{-1}),
$$
so the coefficients of $T_w$ coincide on both sides.

\paragraph{Case 2.} Consider the coefficients of $T_w$ and $T_{s_\alpha w}$ on the left-hand side:
$$
\sum_{u\in U_\alpha} \bigl( T_w \otimes f(w^{-1} u s_\alpha) + T_{s_\alpha w} \otimes f(w^{-1} s_\alpha u s_\alpha) \bigr).
$$
By the property of a BN‑pair, $w^{-1} U_\alpha s_\alpha \subset B w^{-1} s_\alpha B$ because $\ell(w^{-1}s_\alpha)=\ell(w^{-1})+1$. Hence
$$
\sum_{u\in U_\alpha} f(w^{-1} u s_\alpha) = q f(w^{-1} s_\alpha).
$$
Again by the BN‑pair property,
$$
B w^{-1} s_\alpha U_\alpha s_\alpha B = B w^{-1} s_\alpha B \cup B w^{-1} B,
$$
since $\ell(w^{-1}s_\alpha)=\ell(w)+1$. Combining the observation below Step 1, we have:
$$
\sum_{u\in U_\alpha} f(w^{-1} s_\alpha u s_\alpha) = f(w^{-1}) + (q-1) f(w^{-1} s_\alpha).
$$
Now consider the right-hand side:
$$
T_{s_\alpha}\bigl( T_w \otimes f(w^{-1}) + T_{s_\alpha w} \otimes f(w^{-1}s_\alpha) \bigr)
$$
$$
= T_{s_\alpha w}\otimes\bigl(f(w^{-1})+(q-1)f(w^{-1}s_\alpha)\bigr) + T_w \otimes\bigl(q f(w^{-1}s_\alpha)\bigr).
$$
Hence the coefficients coincide.

\paragraph{Case 3.} Replace $w$ by $w s_\alpha$ and repeat the argument of Case 2.
\end{proof}

We should mention here that $I/K_+$ is a Borel subgroup of $K/K_+$. Now, let us assume that $\chi$ is an unramified character of $G$, and consider a single segment $\langle \chi,\chi v,\dots,\chi v^k\rangle = \langle\Delta\rangle$. Section 2 of Zelevinsky's paper told us that the Jacquet functor $r_{(1,1,\dots,1)(n)}(St(\langle\Delta\rangle))\cong \chi v^k \otimes \dots \otimes \chi$. It is well-known that there is a vector space isomorphism (see Casselman's paper \cite{Casselman1980}) $V^I \cong V_N^{T_0}$. Hence $St(\langle\Delta\rangle)^I \cong (r_{(1,1,\dots,1)(n)}(St(\langle\Delta\rangle)))^{T_0}\cong(\chi v^k \otimes \dots \otimes \chi)^{T_0}$.
Hence $\dim_{\mathbb{C}} St(\langle\Delta\rangle)^I = 1$. we should note that:

\begin{theorem}
Suppose $G$ is an unramified reductive $p$-adic group, and $(\pi,V)$ is an Iwahori-spherical irreducible representation of $G$. Fix an Iwahori subgroup $I$ which is contained in $K$. Then every irreducible $K/K_{+}$-subrepresentation $W$ of $V^{K_{+}}$ lies in the principal Harish-Chandra series $\operatorname{Ind}_{I/K_{+}}^{K/K_{+}}(1)$ (i.e., parabolic induction from the trivial representation of a quasi-split torus).
\end{theorem}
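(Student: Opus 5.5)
Let $W\subset V^{K_+}$ be an irreducible $K/K_+$-subrepresentation. We must show $W$ is a constituent of $\operatorname{Ind}_{I/K_+}^{K/K_+}(\mathbf{1})$. Since $K/K_+$ is finite, $V^{K_+}$ is semisimple; $V^{K_+}$ is also finite-dimensional, by admissibility of the irreducible $V$. By Frobenius reciprocity, $\operatorname{Hom}_{K/K_+}(W,\operatorname{Ind}_{I/K_+}^{K/K_+}\mathbf{1})\cong (W^\vee)^{I/K_+}$. Hence, since $I/K_+$ is a Borel subgroup of $K/K_+$ and $W^\vee$ has Borel-fixed vectors iff $W$ does, it suffices to prove $W^{I}\neq 0$. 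The plan is therefore to show that every irreducible constituent of $V^{K_+}$ has nonzero $I$-fixed vectors.

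\textbf{Step 1: embed $V$ into an unramified principal series.} Since $V^I\neq 0$, Borel's theorem (equivalently, Casselman's isomorphism $V^I\cong (V_N)^{T_0}$ used just above) gives $(V_N)^{T_0}\neq 0$. Here $T_0=T\cap K$ and $V_N$ is the Jacquet module for the standard Borel $P_0=TN$. As $V_N$ is admissible of finite length and $T_0$ is compact, $(V_N)^{T_0}$ is a nonzero $T/T_0$-stable summand. It therefore has an irreducible quotient, which is an unramified character $\chi$ of $T$. Frobenius reciprocity then gives a nonzero, hence injective, map $V\hookrightarrow \operatorname{Ind}_{P_0}^G(\chi)$ (normalization is irrelevant here, as the paper remarks).

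\textbf{Step 2: compute the depth-zero part of the principal series.} Apply the Mishra--R\"osner theorem with $P=P_0$ and $M=T$. Since $\chi$ is unramified, it is trivial on $T\cap K_+$, and we obtain
\[
\operatorname{Ind}_{P_0}^G(\chi)^{K_+}\cong \operatorname{Ind}_{(P_0\cap K)/(P_0\cap K_+)}^{K/K_+}(\chi|_{T_0})=\operatorname{Ind}_{B(\mathbb{F}_q)}^{GL_n(\mathbb{F}_q)}(\mathbf{1}),
\]
because $\chi|_{T_0}$ is trivial and $(P_0\cap K)/(P_0\cap K_+)=I/K_+$ is the Borel subgroup. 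Taking $K_+$-invariants is exact on smooth representations, since $K_+$ is compact. Hence Step 1 yields an injection of $K/K_+$-modules
\[
V^{K_+}\hookrightarrow \operatorname{Ind}_{I/K_+}^{K/K_+}(\mathbf{1}).
\]
Consequently every irreducible subrepresentation $W$ is a subrepresentation of the principal series, as claimed. For a general unramified group, the same argument works with the minimal parabolic and the hyperspecial $K$, using the Iwasawa decomposition $G=P_0K$.

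\textbf{Main obstacle.} The delicate point is Step 1: producing an \emph{unramified} character $\chi$ of the torus, rather than merely a character trivial on $T_0$ up to some twist. This uses that $(V_N)^{T_0}$ is a direct summand of $V_N$ on which $T_0$ acts trivially, so its irreducible quotients are genuinely unramified. The other ingredient is the compatibility of the Mishra--R\"osner isomorphism with the chosen embedding. Both should follow from standard Casselman theory, so I expect the argument to be short once Borel--Casselman is invoked.
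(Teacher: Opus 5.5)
Your argument is correct, but it takes a different route from the paper.

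\textbf{The paper's route.} It works with Moy--Prasad unrefined minimal $K$-types.
\begin{enumerate}
\item Since $V^I\neq 0$, the pair (chamber of $I$, trivial character) is a minimal $K$-type of $V$.
\item All minimal $K$-types of $V$ are associated, so every one has the form (chamber, trivial).
\item By the proof of Lemma A.1, each irreducible $W\subset V^{K_+}$ contains such a type for some chamber adjacent to the vertex of $K$, with Iwahori subgroup $I'$. Hence $W\hookrightarrow\operatorname{Ind}_{I'/K_+}^{K/K_+}(\mathbf{1})$.
\item Conjugacy of Borel subgroups of $K/K_+$ replaces $I'$ by $I$.
\end{enumerate}

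\textbf{Your route.} You use Borel--Casselman to embed $V$ into an unramified principal series $\operatorname{Ind}_{P_0}^G(\chi)$. You then compute $\operatorname{Ind}_{P_0}^G(\chi)^{K_+}\cong\operatorname{Ind}_{I/K_+}^{K/K_+}(\mathbf{1})$ via the Iwasawa decomposition, i.e.\ Theorem 4.1.

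\textbf{What each buys.} Yours is more elementary. It uses only Casselman's isomorphism and Theorem 4.1, both already invoked in Section 4, and avoids depth-zero type theory and the associativity of minimal $K$-types. It also proves more: all of $V^{K_+}$ embeds in $\operatorname{Ind}_{I/K_+}^{K/K_+}(\mathbf{1})$. So each $\pi_\mu$ occurs in $V^{K_+}$ with multiplicity at most its multiplicity in that induced module, which is $\dim\rho_\mu$ for $GL_n$. The paper's approach instead sits inside the general framework of Lemma A.1, which covers arbitrary depth-zero representations and non-principal Harish-Chandra series.

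\textbf{Small remarks.}
\begin{itemize}
\item Your opening reduction to $W^I\neq 0$ is never used, since Steps 1--2 give the embedding directly.
\item There is no real ``compatibility'' issue with Mishra--R\"osner. Composing the injective $K$-map $V^{K_+}\hookrightarrow\operatorname{Ind}_{P_0}^G(\chi)^{K_+}$ with any $K/K_+$-isomorphism suffices.
\item The statement allows an arbitrary Iwahori $I\subset K$. You should either choose $P_0$ adapted to $I$, so that $(P_0\cap K)K_+=I$, or invoke conjugacy of Borel subgroups of $K/K_+$ as the paper does.
\end{itemize}
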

We will give a self-contained proof of this proposition in the appendix.
And Corollary 4.5 of Chan-Savin \cite{ChanSavin2018} also indicated that $St(\langle\Delta\rangle)^{K_+}$ contains the Steinberg representation, which means $St(\langle\Delta\rangle)^I$ contains the sign representation. Hence, considering the dimension, we can see that $St(\langle\Delta\rangle)^{K_+} \cong St$. Here $St$ means the Steinberg representation of the finite Lie group $K/K_+$.

\begin{lemma}
$St(\langle\Delta\rangle)^{K_+} \cong St$ as $K/K_+$ representations. Also there is a Hecke algebra isomorphism $St(\langle\Delta\rangle)^I \cong \operatorname{sgn}$.
\end{lemma}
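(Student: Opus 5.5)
The plan is to assemble the facts recorded just before the statement into a dimension count. The first step is to compute $\dim St(\langle\Delta\rangle)^I$. The space $St(\langle\Delta\rangle)^I$ is identified with $(St(\langle\Delta\rangle)^{K_+})^{I/K_+}$, because $K_+\subset I$. Casselman's isomorphism $V^I\cong V_N^{T_0}$ then gives
\[
St(\langle\Delta\rangle)^I \cong \bigl(\chi\nu^k\otimes\cdots\otimes\chi\bigr)^{T_0},
\]
using that the full Jacquet module of $St(\langle\Delta\rangle)$ is the single character $\chi\nu^k\otimes\cdots\otimes\chi$ (Zelevinsky, Section 2). Each factor is unramified, so $T_0$ acts trivially and this space is one-dimensional. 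The only inputs here are the precise shape of the Jacquet module and the fact that an unramified character is trivial on $T_0$.

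Next, set $W=St(\langle\Delta\rangle)^{K_+}$, a finite-dimensional representation of $K/K_+\cong GL_n(\mathbb F_q)$. By the theorem quoted above (proved in the appendix), every irreducible constituent of $W$ lies in the principal Harish-Chandra series. Write $W=\bigoplus_\mu m_\mu\pi_\mu$. Since each $\pi_\mu$ in the principal series has $\pi_\mu^{B}\neq 0$, with $B=I/K_+$, we get
\[
\dim W^{B}=\sum_\mu m_\mu\dim\pi_\mu^{B}\ge\sum_\mu m_\mu .
\]
Because $W^B=St(\langle\Delta\rangle)^I$ is one-dimensional, $W$ is irreducible, say $W\cong\pi_\mu$ with $\dim\pi_\mu^B=1$. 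By Corollary 4.5 of Chan--Savin, $W$ contains the Steinberg representation $St$ of $K/K_+$. Hence $W\cong St$.

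For the Hecke algebra statement, I will use the functor $\Theta$ and Frobenius reciprocity $\operatorname{Hom}_G(\operatorname{Ind}_B^G\mathbf 1,M)=M^B$. Under these, $St^B$ is the one-dimensional $H$-module attached to $St$, so it remains to identify this module with $\operatorname{sgn}$. I will check this directly with the explicit formula $T_{s_\alpha}v=\sum_{u\in U_\alpha}us_\alpha v$. The one-dimensional $H$-modules are given by $T_s\mapsto q$ (index character, corresponding to the trivial representation $\mathbf 1$) or $T_s\mapsto -1$ (sign). Since $St\neq\mathbf 1$, some $T_s$ acts by $-1$, and for $GL_n$ all simple reflections are conjugate. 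Alternatively, I can argue that a line on which some $T_{s_\alpha}$ acted by $q$ would be fixed by the parabolic generated by $B$ and $s_\alpha$, and this is impossible in $St$. Either way $T_s$ acts by $-1$ for every $s$, so $St(\langle\Delta\rangle)^I\cong\operatorname{sgn}$.

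I expect the main obstacle to be this last identification, namely matching the Hecke-module parametrization so that $St$ corresponds to $\operatorname{sgn}$ and not to the trivial character. I would settle it by the direct computation just described, using the relation $(T_s-q)(T_s+1)=0$. The earlier steps are dimension bookkeeping once the cited results are granted.
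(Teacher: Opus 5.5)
Your proposal is correct and follows essentially the same route as the paper. Both arguments get $\dim St(\langle\Delta\rangle)^I=1$ from the Jacquet module and Casselman's isomorphism, use Theorem 4.3 to force every constituent of $St(\langle\Delta\rangle)^{K_+}$ to have nonzero $B$-fixed vectors (so the space is irreducible), and then invoke Chan--Savin to identify it as $St$. Your explicit check that $St^B$ is the sign character and not the index character (via $\Theta(\mathbf 1)$ being the index character, together with the braid relation $T_sT_tT_s=T_tT_sT_t$ on a line) supplies a step the paper only asserts.
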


\section{Decomposition of the generic Iwahori-spherical representation}
By Theorem 3.4, every non-degenerate Iwahori-spherical representation of $G$ has the form $V = St(\langle\Delta_1\rangle) \times St(\langle\Delta_2\rangle) \times \cdots \times St(\langle\Delta_r\rangle)$ (not necessarily irreducible) where the cuspidal support of $V$ consists of characters. Also note that the order does not matter for the decomposition since they have the same semisimplification. Let us study the component of $V^{K_+}$. By Theorem 4.3 and Theorem 2.15, we only need to study $V^I$. Suppose that the corresponding parabolic is $P=MN$, and the corresponding parabolic Hecke subalgebra is $H_M$. Combining Theorem 4.1 and Lemma 4.2, we obtain a Hecke algebra isomorphism:

\begin{lemma}
$V^I = (St(\langle\Delta_1\rangle) \times St(\langle\Delta_2\rangle) \times \cdots \times St(\langle\Delta_r\rangle))^I \cong H_n \otimes_{H_M}(\operatorname{sgn}_1 \otimes \operatorname{sgn}_2 \otimes \cdots \otimes \operatorname{sgn}_r)$.
\end{lemma}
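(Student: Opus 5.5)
The plan is to compose three identifications, each justified by a result already stated. Write $V = \operatorname{Ind}_P^G(\sigma)$ with $\sigma = St(\langle\Delta_1\rangle)\otimes\cdots\otimes St(\langle\Delta_r\rangle)$, a representation of $M = GL_{n_1}(F)\times\cdots\times GL_{n_r}(F)$. Since $I\supset K_+$ and $I/K_+$ is the Borel $B$ of $K/K_+\cong GL_n(\mathbb{F}_q)$, we have $V^I = (V^{K_+})^{B}$. The Hecke algebra $H_n$ acts on this space through the finite group $K/K_+$, as in the subsection on the action of the Hecke algebra.

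\textbf{Step 1.} Apply Theorem 4.1 (Mishra--R\"osner). This gives an isomorphism of $K/K_+$-representations
\[
V^{K_+}\cong \operatorname{Ind}_{(P\cap K)/(P\cap K_+)}^{K/K_+}\bigl(\sigma^{M\cap K_+}\bigr).
\]
The group $(P\cap K)/(P\cap K_+)$ is the finite parabolic $P(\mathbb{F}_q)$, with the unipotent radical acting trivially. Moreover, $M\cap K_+ = \prod_i K_{i,+}$, so
\[
\sigma^{M\cap K_+} = \bigotimes_i St(\langle\Delta_i\rangle)^{K_{i,+}}.
\]
By Lemma 4.4 each factor is the Steinberg representation $St_i$ of $GL_{n_i}(\mathbb{F}_q)$. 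In particular each factor lies in the principal Harish-Chandra series, so the hypotheses of Lemma 4.2 hold. Because the isomorphism of Theorem 4.1 is $K/K_+$-equivariant, taking $B$-invariants turns it into an isomorphism of $H_n$-modules
\[
V^I \cong \operatorname{Ind}_{P(\mathbb{F}_q)}^{GL_n(\mathbb{F}_q)}(St_1\otimes\cdots\otimes St_r)^B.
\]
One point needs checking here: every constituent of $V^{K_+}$ lies in the principal series, so that $V^I$ is a genuine Hecke module in the sense of Theorem 2.15. This follows either from Theorem 4.3 applied constituentwise or, more directly, from the fact that a parabolic induction of principal series representations stays in the principal series.

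\textbf{Step 2.} Apply Lemma 4.2 with $\rho_i = St_i$. This yields
\[
V^I\cong H_n\otimes_{H_M}\bigl(St_1^{B_1}\otimes\cdots\otimes St_r^{B_r}\bigr).
\]

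\textbf{Step 3.} By the second part of Lemma 4.4, $St_i^{B_i}\cong\operatorname{sgn}_i$ as $H_{n_i}$-modules. The algebra $H_M$ is the tensor product $H_{n_1}\otimes\cdots\otimes H_{n_r}$, so the tensor product of these modules is $\operatorname{sgn}_1\otimes\cdots\otimes\operatorname{sgn}_r$ as an $H_M$-module. Substituting this into Step 2 gives the claim.

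The main obstacle is the compatibility in Step 1. One must check that the Hecke action on $V^I$, defined by $T_{s_\alpha}v=\sum_{u\in U_\alpha}us_\alpha v$ (Lemma 2.16), matches the right-translation action on the induced space used in Lemma 4.2. This reduces to the equivariance of the Mishra--R\"osner isomorphism, which is given by restricting functions to $K$. I would verify directly that restriction to $K$ followed by reduction modulo $K_+$ intertwines the $K$-actions; then Lemma 2.16 transports the Hecke action automatically. A secondary point is that the $H_{n_i}$-structure on $St(\langle\Delta_i\rangle)^I$ used in Lemma 4.4 must be the one obtained from its $K_i/K_{i,+}$-action. This holds by construction, since it is identified with the sign character of the finite Steinberg representation.
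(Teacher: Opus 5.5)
Your proposal is correct and follows the paper's own route. You reduce to the finite group via Theorem 4.1 (Mishra--R\"osner), identify each $St(\langle\Delta_i\rangle)^{K_{i,+}}$ with the finite Steinberg representation and its $B_i$-invariants with $\operatorname{sgn}_i$ via Lemma 4.4, and then apply Lemma 4.2. Transporting the Hecke action through the $K/K_+$-equivariant isomorphism, rather than comparing with $H(K,I)$, is exactly the alternative the paper itself recommends to avoid ambiguity about the Hecke actions.
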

We should mention  that $H(K,I)$ is isomorphic to $H_n$ and that their actions on $V^I$ coincide. The readers can see this in the appendix. However, one can also first apply Theorem 4.1 to reduce the problem to the finite Lie group level, and then use only the theory of finite Lie groups together with Lemma 4.2. This approach avoids any potential ambiguity concerning the Hecke algebra actions.

Combining Lemma 2.8, Theorem 2.14 and Lemma 5.1 we can get the following theorem:

\begin{theorem}
Suppose that the length of $\Delta_i$ is $n_i$, and $n_1 \ge n_2 \ge \dots \ge n_r$. Let us denote the partition $(n_1,n_2,\dots,n_r)'$ (conjugate) by $\lambda$. Let $V = St(\langle\Delta_1\rangle) \times St(\langle\Delta_2\rangle) \times \cdots \times St(\langle\Delta_r\rangle)$ be an Iwahori-spherical representation of $G$ (not necessarily irreducible). Then $V^{K_+}$ can be decomposed as $\bigoplus_{\mu \trianglelefteq \lambda} K_{\mu'\lambda'}\,\pi_\mu$, where $K_{\mu'\lambda'}$ is the Kostka number. In particular, $\pi_\mu$ occurs in $V^{K_+}$ iff $\mu \trianglelefteq \lambda$.
\end{theorem}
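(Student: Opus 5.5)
The proof reduces the branching problem for $V^{K_+}$ to a decomposition of the Hecke module $V^I$. Then Theorem~\ref{thm:induction_decomposition} lets us read off the answer from the symmetric group, where the Pieri rule and Lemma 2.8 apply.

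\textbf{Step 1: reduce to the Hecke module $V^I$.} By Theorem 4.3 (applied to each irreducible constituent of $V$, which is again Iwahori-spherical since the cuspidal support consists of unramified characters), every irreducible $K/K_+$-constituent of $V^{K_+}$ lies in the principal Harish-Chandra series. Since $K/K_+$ is finite, $V^{K_+}$ is semisimple. By Theorem 2.14 and Frobenius reciprocity, the multiplicity of $\pi_\mu$ in $V^{K_+}$ equals the multiplicity of the corresponding simple $H_n$-module in $(V^{K_+})^{I/K_+}=V^I$.

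Here it is cleanest to follow the remark after Lemma 5.1. First apply Theorem 4.1 to get
$$V^{K_+}\cong \operatorname{Ind}_{(P\cap K)/(P\cap K_+)}^{K/K_+}\bigl(\textstyle\bigotimes_i St(\langle\Delta_i\rangle)^{GL_{n_i}(\mathfrak o)_+}\bigr).$$
Lemma 4.4 identifies each tensor factor with the finite Steinberg representation of $GL_{n_i}(\mathbb F_q)$. So $V^{K_+}$ is the Harish-Chandra induction $R_M^{G}(St_{n_1}\otimes\cdots\otimes St_{n_r})$ at the finite level, with all Hecke actions purely finite.

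\textbf{Step 2: identify $V^I$ as an induced Hecke module.} By Lemma 4.2, taking $B$-invariants gives
$$V^I\cong H_n\otimes_{H_M}(\operatorname{sgn}_1\otimes\cdots\otimes\operatorname{sgn}_r),$$
using $St_{n_i}^{B_i}\cong\operatorname{sgn}_i$ from Lemma 4.4. This is exactly Lemma 5.1.

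\textbf{Step 3: pass to the symmetric group.} Since $H_n$ is semisimple, Theorem~\ref{thm:induction_decomposition} says the multiplicities in this induced module coincide with those in $\operatorname{Ind}_{S_{n_1}\times\cdots\times S_{n_r}}^{S_n}(\operatorname{sgn}_1\otimes\cdots\otimes\operatorname{sgn}_r)$. One must note that the sign representation of $(H_q)_J$ specializes at $q=1$ to the sign character of $W_J$.

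\textbf{Step 4: apply the Pieri rule.} Iterating the Pieri rule (Theorem 2.7) gives multiplicity $c_\tau=K_{\tau'\lambda'}$. Lemma 2.8 gives $c_\tau\neq0$ iff $\tau\trianglelefteq\lambda$. Transporting back through Steps 1–3 yields the stated decomposition and the occurrence criterion. The ordering of the segments is irrelevant because different orderings have the same semisimplification, and $V^{K_+}$ depends only on it.

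\textbf{Main obstacle.} The delicate point is the bookkeeping of the labeling in Step 1. One must check that the bijection of Theorem 2.14 between principal series constituents and simple $H_n$-modules, composed with the $q\to1$ specialization, sends $\pi_\mu$ to $\rho_\mu$ with the conventions fixed in the paper. In particular, the Steinberg representation must correspond to $\operatorname{sgn}=\rho_{(1^n)}$, consistent with Lemma 4.4, and not to its conjugate. I would fix this by checking the extreme cases: the trivial representation has $T_s$ acting by $q$, specializing to the trivial character, while the Steinberg has $T_s$ acting by $-1$, specializing to sign. Once the convention is pinned down on these cases, compatibility with induction (Theorem~\ref{thm:induction_decomposition}) propagates it to all $\mu$.
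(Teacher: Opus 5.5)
Your proposal is correct and follows the paper's route. You reduce to $V^I$ via Theorem 4.3 and the $\Theta$-bijection, identify $V^I\cong H_n\otimes_{H_M}(\operatorname{sgn}_1\otimes\cdots\otimes\operatorname{sgn}_r)$ via Theorem 4.1, Lemma 4.4 and Lemma 4.2 (this is Lemma 5.1), transfer to $S_n$ with Theorem 2.14, and read off the Kostka numbers from the Pieri rule and Lemma 2.8. Your extra care about the non-irreducible $V$, semisimplicity, and the specialization of the sign character fills in details the paper leaves implicit (one small slip: the $\Theta$-bijection you cite in Step 1 is Theorem 2.15, not 2.14).
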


\section{General case}
Suppose that $a$ is a multi-segment in Zelevinsky's sense, and suppose further that every element in $a$ is one-dimensional and unramified. Then $\langle a \rangle$ is an irreducible Iwahori-spherical representation of $G$. It is known from \cite{Zelevinsky1980} that the Grothendieck ring $\mathcal{R}$ of $G$ is generated by the set $\{ \langle \Delta \rangle \mid \Delta \text{ is a single segment} \}$ under addition and multiplication. Hence $\langle a \rangle$ can be expressed as a linear combination of products $\langle \Delta_1 \rangle \times \langle \Delta_2 \rangle \times \cdots \times \langle \Delta_r \rangle$ with integer coefficients. We can analyze this more precisely.

\begin{lemma}
If $a = \{\Delta_1,\Delta_2,\dots,\Delta_r\}$, and suppose that the length of $\Delta_i$ is $n_i$, and the cuspidal support of $a$ consists of unramified characters. Then there exist integers $c_b$ such that $\langle a \rangle = \sum_{b \le a} c_b \, \pi(b)$, and $c_a = 1$.
\end{lemma}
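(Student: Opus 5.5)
We invert the unitriangular matrix of decomposition numbers supplied by Zelevinsky's Theorem~7.1. Fix the cuspidal support (a finite multiset of unramified characters) and let $\mathcal{O}_a$ be the finite set of multisegments $b$ with $b \le a$. Elementary operations preserve the cuspidal support, so every $b \le a$ has the same support as $a$. Hence $\mathcal{O}_a$ is finite, being contained in the set of multisegments with that support. Every $b\in\mathcal{O}_a$ is again made of unramified characters. By Theorem~7.1, in the Grothendieck ring we have
\[
\pi(b) = \sum_{c \le b} m(c;b)\,\langle c\rangle, \qquad m(b;b)=1 .
\]
If $c \le b$ and $b \le a$ then $c \le a$, since $\le$ is transitive as a relation generated by sequences of operations. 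Thus these identities form a square system indexed by $\mathcal{O}_a$.

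The key step is to verify that $\le$ is a genuine partial order on $\mathcal{O}_a$, and in particular antisymmetric. The system is then unitriangular with respect to any linear extension of $\le$, and can be inverted over $\mathbb{Z}$. For antisymmetry I will exhibit a strictly monotone invariant. An elementary operation replaces linked $\Delta,\Delta'$ by $\Delta\cup\Delta'$ and $\Delta\cap\Delta'$. This strictly increases $\sum_i |\Delta_i|^2$, because $x^2+y^2 < (x+y-z)^2 + z^2$ whenever $z < \min(x,y)$. Here $x=|\Delta|$, $y=|\Delta'|$ and $z=|\Delta\cap\Delta'|$, and $z<\min(x,y)$ holds because neither segment contains the other. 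Hence $b\le a$ and $a\le b$ force $a=b$. (Alternatively one can cite Zelevinsky's paper, where $\le$ is shown to be a partial order.)

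I then argue by induction along this order. For a minimal $b$, the identity reads $\pi(b)=\langle b\rangle$. For general $a$ we have
\[
\langle a\rangle = \pi(a) - \sum_{c<a} m(c;a)\,\langle c\rangle .
\]
By the induction hypothesis each $\langle c\rangle$ with $c<a$ is an integer combination $\sum_{b\le c} c'_b\,\pi(b)$, and each such $b$ satisfies $b\le a$ by transitivity. Substituting gives $\langle a\rangle = \sum_{b\le a} c_b\,\pi(b)$ with $c_b\in\mathbb{Z}$. The coefficient of $\pi(a)$ is $1$: every term coming from the sum involves only $b\le c<a$, hence $b\neq a$ by antisymmetry.

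The only point needing care is this antisymmetry, that is, ensuring that no $\pi(a)$ term reappears from the lower terms. The monotone invariant $\sum|\Delta_i|^2$ disposes of it, so I expect no real obstacle. The argument is purely formal in the Grothendieck ring and uses nothing about $K_+$.
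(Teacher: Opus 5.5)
Your proposal is correct and follows the paper's proof: invert the unitriangular relation $\pi(a)=\langle a\rangle+\sum_{b<a}m(b;a)\langle b\rangle$ from Zelevinsky's Theorem 7.1 (the paper's Theorem 3.3) by induction along $\le$. You also supply the finiteness and antisymmetry justification, via the strictly increasing invariant $\sum|\Delta_i|^2$, that the paper's ``an inductive argument works'' leaves implicit. The only minor difference is that you get the base case $\pi(b)=\langle b\rangle$ from $m(b;b)=1$, whereas the paper uses the irreducibility criterion (Theorem 3.1).
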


\begin{proof}
If $a$ is minimal, i.e., we cannot use an elementary operation to turn $a$ into a different element, then this means that segments in $a$ are pairwise not linked. Hence $\langle a \rangle = \pi(a)$. For a general $a$, by Theorem 3.3 we have $\langle a \rangle - \pi(a) = \sum_{b < a} -m(b;a)\langle b \rangle$. Then an inductive argument works.
\end{proof}

Suppose $\langle a \rangle = \sum_{b \le a} c_b \, \pi(b)$. Then we can apply the Zelevinsky involution: $St(\langle a \rangle) = \sum_{b \le a} c_b \, St(\pi(b))$. Since $St$ is an involution, every irreducible representation can be represented as $St(\langle a \rangle)$ (see \cite{Zelevinsky1980}). Indeed, Schneider and Stuhler \cite{SchneiderStuhler1997} proved that $St$ sends irreducible representations to irreducible representations.

For an arbitrary multi-segment $c = \{\Delta_1,\Delta_2,\dots,\Delta_r\}$, let $n_i$ be the length of $\Delta_i$, and assume $n_1 \ge n_2 \ge \dots \ge n_r$. We define the associated partition by $P(c) = (n_1,n_2,\dots,n_r)'$ (the conjugate partition). It is not hard to see that if $a \ge b$ then $P(a) \trianglerighteq P(b)$.

\begin{theorem}
$\pi_\lambda$ occurs in $(St(\langle a \rangle))^{K_+}$ only if $\lambda \trianglelefteq P(a)$. Moreover, the multiplicity of $\pi_{P(a)}$ is exactly $1$. Furthermore, the above argument also gives a **combinatorial algorithm** to compute the multiplicity of $\pi_\lambda$ in $(St(\langle a \rangle))^{K_+}$.
\end{theorem}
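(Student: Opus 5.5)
We work in the Grothendieck group and use the fact that $V\mapsto V^{K_+}$ is exact on smooth representations, since $K_+$ is compact open and we work in characteristic zero. It therefore induces an additive map from the Grothendieck group of Iwahori-spherical representations to the representation ring of $K/K_+$. By Lemma 6.1 and the Zelevinsky involution,
\[
St(\langle a\rangle)=\sum_{b\le a} c_b\, St(\pi(b)), \qquad c_a=1 .
\]
Here $St(\pi(b))=St(\langle\Delta^b_1\rangle)\times\cdots\times St(\langle\Delta^b_s\rangle)$, because $St$ is a ring automorphism of $\mathcal R$. Each $St(\pi(b))$ is exactly a generic representation of the shape treated in Theorem 5.2, since its cuspidal support consists of unramified characters. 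Taking $K_+$-fixed vectors gives the virtual identity
\[
[St(\langle a\rangle)^{K_+}] = \sum_{b\le a} c_b \sum_{\mu\trianglelefteq P(b)} K_{\mu' P(b)'}\,[\pi_\mu].
\]

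For the support statement, we use the remark that $b\le a$ implies $P(b)\trianglelefteq P(a)$. I would verify this directly. An elementary operation replaces lengths $x,y$ by $x',y'$ with $x'+y'=x+y$ and $\max(x',y')>\max(x,y)$ (the union is strictly longer). This makes the length partition $(n_i)$ strictly more dominant, and by Lemma 2.5 (duality) the conjugate becomes less dominant. So $P(b)\trianglelefteq P(a)$, with strict inequality when $b\neq a$. Every $\mu$ appearing on the right-hand side therefore satisfies $\mu\trianglelefteq P(b)\trianglelefteq P(a)$, so any $\pi_\mu$ occurring in the genuine module $St(\langle a\rangle)^{K_+}$ satisfies $\mu\trianglelefteq P(a)$. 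The multiplicities in the genuine module are nonnegative and equal the virtual coefficients, so no cancellation issue arises for the support claim: a $\pi_\mu$ with $\mu\not\trianglelefteq P(a)$ has coefficient $0$ in the virtual sum.

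For the multiplicity of $\pi_{P(a)}$, only those $b$ with $P(a)\trianglelefteq P(b)$ can contribute, and combined with $P(b)\trianglelefteq P(a)$ this forces $P(b)=P(a)$. By the strictness above, this happens only for $b=a$. Hence the coefficient is $c_a K_{P(a)'P(a)'}=1$, using Lemma 2.8.

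The combinatorial algorithm is then to compute the $c_b$ recursively from Lemma 6.1, via $c$-coefficients expressed through $m(b;a)$ by inverting the unitriangular matrix $(m(b;a))$ on the poset $\{b\le a\}$. Each $b$ contributes Kostka numbers $K_{\mu'P(b)'}$, and the contributions are summed. The main obstacle is the strictness claim $P(b)\triangleleft P(a)$ for $b<a$. I would prove it carefully, including the case where the intersection is empty and drops out, which merges two parts into one and is again strictly more dominant.
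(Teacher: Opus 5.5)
Your proposal is correct and follows the same route as the paper: expand $St(\langle a\rangle)=\sum_{b\le a}c_b\,St(\pi(b))$ via Lemma 6.1 and the involution, apply Theorem 5.2 to each term, and use that $b\le a$ forces $P(b)\trianglelefteq P(a)$. Your added details are welcome: the exactness of $K_+$-invariants, and especially the strictness $P(b)\neq P(a)$ for $b<a$, which the paper leaves implicit but which is exactly what the multiplicity-one claim for $\pi_{P(a)}$ needs.
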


\begin{proof}
This is a direct corollary of Theorem 5.2 and Lemma 6.1.
The algorithm can be as follows: First, we determine the order (Section 3.4) of the multi-segments with the same cuspidal support and work out the decomposition number $m(a,b)$. This can be done because there are only finitely many multi-segments with the given cuspidal support. And there exists a combinatorial algorithm to compute $m(a,b)$. Then we compute the numbers $c_b$ in Lemma 6.1 via $m(a,b)$. Hence we obtain the decomposition.
\end{proof}
Remark: Applying dual again, we can also see that there is a minimal one with multiplicity 1. More precisely, if $<b>=St(<a>)$. Then the maxiaml one is $\pi_{P(a)}$, and the minimal one is $\pi_{P(b)'}$.
\\Note that this theorem gives only a necessary condition. To decide whether $\pi_\lambda$ actually occurs in $(St(\langle a \rangle))^{K_+}$, one must compute the multiplicity. The computation involves the numbers $m(b;a)$, which are very important in representation theory and are deeply related to the Kazhdan–Lusztig polynomials. At least theoretically, all these multiplicities can be computed, and we have provided a combinatorial algorithm to do so.
\appendix
\section{Harish-Chandra principal series and unramified principal series}
In this appendix, we prove some facts about Harish-Chandra principal series  and  unramified principal series. Through out the appendix, we assume $G$ is an unramified group, $K$ is its  hyperspecial subgroup, $K_+$ is the corresponding pro-unipotent  radical. $I$ is an Iwahori subgroup containing in $K$.
\begin{lemma}
\label{lem:cuspidal-support}
Let $G$ be an unramified reductive $p$-adic group, let $K$ be a hyperspecial subgroup of $G$, and let $K_{+}$ be its corresponding pro-unipotent radical. Let $(\pi,V)$ be an irreducible smooth depth-zero representation of $G$. Then we may regard $V^{K_{+}}$ naturally as a representation of $K$ and hence also as a representation of the finite reductive group $K/K_{+}$. Under these assumptions, then for an arbitrary irreducible $K/K_{+}$-subrepresentation $W$ of $V^{K_{+}}$, there exists a facet $c$ and a cuspidal representation $\rho$  of the finite Lie group $G_c/G_{c+}$, such that the vertex associated to $K$ is contained in the closure of $c$ and W contains $\rho$. (We can also prove that $[c,\rho]$ is a unrefined minimal K-type of V ). Moreover, every irreducible component of the $K/K_{+}$ representation is a subrepresentation of a Harish-Chandra series of the finite Lie group $K/K_{+}$.
\end{lemma}

\begin{proof}
Since $V$ has depth zero by hypothesis, the space $V^{K_{+}}$ is non-zero.

Let $W$ be an irreducible $K/K_{+}$-subrepresentation of $V^{K_{+}}$.

By Theorem 5.2 \cite{MoyPrasad1994} , $W$ contains a depth-zero minimal $K$-type associated to a parahoric subgroup $G_{c}\subseteq K$ (where $c$ is a facet in the Bruhat–Tits building $\mathcal{B}(G)$). Denote this $G_{c}/G_{c+}$-cuspidal representation by $\rho$. The subgroup $G_{c}/K_{+}$ is a parabolic subgroup of $K/K_{+}$ with Levi factor $G_{c}/G_{c+}$. For simplicity, write this Levi decomposition as $Q = LU$. Where $L$ is the Levi factor and $U$ is its unipotent radical.

There exists a $Q$-embedding of $\rho$ into $W$. Since $\rho$ is $U$-invariant, the image of this embedding lies in $W^{U}$. Note that
$$
W^{U} \subset (V^{K_{+}})^{G_{c+}/K_{+}} = V^{G_{c+}}.
$$
Because $\rho$ is cuspidal, the pair $[\rho,c]$ is an unrefined minimal $K$-type of $V$ by the definition of the unrefined minimal K-type . 

Moreover, we can see that every irreducible component of $V^{K_{+}}$ is a subrepresentation of the parabolic induction from a cuspidal representation of a Levi (this is Harish-Chandra induction since we can find the parabolic and we need not use Deligne-Lusztig induction).
\end{proof}

\begin{lemma}
\label{lem:steinberg-iwahori}
Let $G$ be as in Lemma \ref{lem:cuspidal-support}, let $I$ be its Iwahori subgroup, and let $(\pi,V)$ be an irreducible representation of $G$. If  $V^{K_+}$ contains $\pi_\lambda$ (an irreducible representation lying in the Harish-Chandra principal  series), then $V$ must have a non-zero $I$-fixed vector.
\end{lemma}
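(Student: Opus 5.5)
The plan is to show that $\pi_\lambda$ forces a nonzero $I/K_+$-fixed vector in $V^{K_+}$, and hence an $I$-fixed vector in $V$.

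\textbf{Step 1.} By hypothesis $\pi_\lambda$ is an irreducible constituent of $\operatorname{Ind}_{I/K_+}^{K/K_+}(\mathbf{1})$. Here $I/K_+$ is the Borel subgroup $B$ of the finite reductive group $K/K_+$. By Frobenius reciprocity,
$$\operatorname{Hom}_{K/K_+}\bigl(\operatorname{Ind}_{B}^{K/K_+}\mathbf{1},\pi_\lambda\bigr)=\pi_\lambda^{B}.$$
Since $K/K_+$ is finite and we work over $\mathbb{C}$, the induced module is semisimple. So $\pi_\lambda$ is a direct summand of it, the Hom space above is nonzero, and therefore $\pi_\lambda^{B}\neq 0$.

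\textbf{Step 2.} Fix a $K/K_+$-embedding $\pi_\lambda\hookrightarrow V^{K_+}$. The image of a nonzero $B$-fixed vector of $\pi_\lambda$ is a nonzero vector $v\in (V^{K_+})^{I/K_+}$.

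\textbf{Step 3.} We have $K_+\subseteq I\subseteq K$, with $K_+$ normal in $K$. Hence
$$(V^{K_+})^{I/K_+}=V^{I},$$
so $v$ is a nonzero $I$-fixed vector of $V$.

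If the Iwahori subgroup in question is not the one contained in $K$, we replace it by a $G$-conjugate. All Iwahori subgroups of $G$ are $G$-conjugate, so this does not affect whether $V^I\neq 0$.

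The main point needing care is the identification of $I/K_+$ with a Borel subgroup of $K/K_+$. This follows because $I$ is the preimage of a Borel subgroup under the reduction map $K\to K/K_+$, which holds for the Iwahori subgroup attached to a chamber whose closure contains the hyperspecial vertex of $K$. Everything else is formal. Irreducibility of $V$ is not actually needed.
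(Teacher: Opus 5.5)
Your argument is correct, but it takes a different and more elementary route than the paper. The paper argues through the Moy--Prasad theory recalled in Lemma~A.1. Because the Harish-Chandra cuspidal support of $\pi_\lambda$ is $(T,\mathbf{1})$, it obtains a chamber $c$, whose closure contains the vertex of $K$, such that $(c,\mathbf{1})$ is an unrefined minimal $K$-type of $V$; hence $V^{I_+}$ contains the trivial character of the torus $I/I_+$, and $V^I\neq 0$. You never leave the finite group $K/K_+$. Frobenius reciprocity and semisimplicity give $\pi_\lambda^{B}\neq 0$ for the Borel subgroup $B=I/K_+$, and since $K_+\subseteq I$, such a vector is already $I$-fixed in $V$. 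Your version uses only the identification of $I/K_+$ as a Borel subgroup of $K/K_+$, which the paper invokes anyway. It needs neither irreducibility of $V$ nor the existence theorem for minimal $K$-types, and it makes the choice of Iwahori subgroup explicit rather than tacit. What the paper's route buys is that it places the statement inside the minimal-$K$-type framework. That framework is genuinely needed for the companion statement Lemma~A.5: when $V^I\neq 0$, \emph{every} constituent of $V^{K_+}$ lies in the principal series. There one must exclude the other Harish-Chandra series, which requires uniqueness of unrefined minimal $K$-types up to association; a Frobenius-reciprocity argument alone cannot supply this.
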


\begin{proof}
If $V^{K_+}$ contains $\pi_\lambda$. The cuspidal support of the Steinberg representation is $(T,\mathbf{1})$, where $T$ is a torus and $\mathbf{1}$ denotes the trivial representation. 

By Lemma \ref{lem:cuspidal-support}, there exists a chamber $c$ whose closure contains the vertex associated to $K$ (Assume  the Iwahori subgroup associated to $c$ is $I$ ), such  that Steinberg contains this trivial representation, and the unrefined minimal $K$-type of $V$ is $(c,\mathbf{1})$. This means that the $I/I_{+}$-representation $V^{I_{+}}$ must contain a trivial representation; hence $V^{I} \neq 0$.
\end{proof}

A classical result of Borel and Casselman relates the existence of Iwahori fixed vectors to principal series representations.

\begin{theorem}[Borel–Casselman, \cite{Borel1976}]
\label{thm:borel-casselman}
Let $G$ be a connected reductive group over a non-archimedean local field $k$, and let $I \subset G(k)$ be an Iwahori subgroup. For an irreducible smooth representation $\pi$ of $G(k)$, the following are equivalent:
\begin{enumerate}
    \item $\pi^{I} \neq 0$;
    \item $\pi$ is isomorphic to a subquotient of an unramified principal series representation of $G(k)$.
\end{enumerate}
\end{theorem}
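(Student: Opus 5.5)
The statement to prove is the Borel–Casselman theorem: for irreducible smooth $\pi$, $\pi^I\neq 0$ if and only if $\pi$ is a subquotient of an unramified principal series. The plan is to run both directions through Casselman's isomorphism $V^I\cong (V_N)^{T_0}$, already used in Section 4. Here $B=TN$ is the Borel attached to $I$ and $T_0=T\cap K$. I would first establish this isomorphism for every admissible $V$, not just irreducible ones. The projection $V^I\to V_N$ lands in $(V_N)^{T_0}$, since $T_0\subset I$. For injectivity and surjectivity I would use the Iwahori factorization $I=(I\cap \bar N)T_0(I\cap N)$ together with Jacquet's lemma. Concretely, for a strictly dominant $a\in T$, the operator $e_I\circ a$ on $V^I$ is invertible on the image, and a sufficiently high power of $a$ contracts $I\cap\bar N$. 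This gives a canonical splitting of the projection. I expect this to be the main technical obstacle, because it needs the uniform bound on $N$-stabilisation for admissible representations.

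\textbf{(1)$\Rightarrow$(2).} Suppose $\pi^I\neq0$. By the isomorphism, $(\pi_N)^{T_0}\neq 0$. Since $\pi_N$ is admissible and finitely generated over $T$, and $T/T_0$ is a lattice, the nonzero $T/T_0$-module $(\pi_N)^{T_0}$ has an irreducible quotient. That quotient is an unramified character $\chi$ of $T$, and it extends to a $T$-quotient $\pi_N\twoheadrightarrow\chi$. One may need to twist by $\delta_B^{1/2}$ depending on normalisation; this does not affect unramifiedness. Frobenius reciprocity then turns $\pi_N\to\chi$ into a nonzero map $\pi\to \operatorname{Ind}_B^G\chi$. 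Because $\pi$ is irreducible, this map is injective, so $\pi$ is a submodule, hence a subquotient, of an unramified principal series.

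\textbf{(2)$\Rightarrow$(1).} Let $\pi$ be a subquotient of $X=\operatorname{Ind}_B^G\chi$ with $\chi$ unramified.
\begin{itemize}
\item By exactness of the Jacquet functor and the geometric lemma, $X_N$ has a filtration whose subquotients are $w\chi$ for $w\in W$. All of these are unramified, so $T_0$ acts trivially on every subquotient.
\item Since $T_0$ is compact, taking $T_0$-invariants is exact. Hence $(\pi_N)^{T_0}=\pi_N$.
\item By Jacquet's theorem, $\pi_N\neq 0$: an irreducible subquotient of a principal series is not supercuspidal, since its cuspidal support is $(T,w\chi)$.
\item Applying the isomorphism above to $\pi$ then gives $\pi^I\cong(\pi_N)^{T_0}\neq0$.
\end{itemize}
The only delicate point in this direction is $\pi_N\neq0$. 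I would settle it by noting that $\pi$ embeds into some $\operatorname{Ind}_B^G(w\chi)$, by Jacquet's subrepresentation theorem, and then applying Frobenius reciprocity.
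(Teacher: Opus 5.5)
The paper gives no proof of this statement; it only cites Borel. Your route is Borel's own: reduce both directions to $V^I\cong (V_N)^{T_0}$.

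\textbf{Main gap: injectivity.} Your (2)$\Rightarrow$(1) only needs \emph{surjectivity} of $V^I\to (V_N)^{T_0}$, and Jacquet's lemma for a group with an Iwahori factorization does give that. The problem is \emph{injectivity}, which is exactly what (1)$\Rightarrow$(2) needs. It does not follow from the Iwahori factorization plus Jacquet's lemma.

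What those two facts give is this: for $v\in V^I$, the image of $v$ in $V_N$ vanishes iff $e_I\pi(a^m)v=0$ for $m\gg 0$. Hence $V^I$ splits as the generalized $0$-eigenspace of $A=e_I\pi(a)|_{V^I}$, which is the kernel, plus the subspace where $A$ is invertible, which maps isomorphically onto $(V_N)^{T_0}$. That is your ``canonical splitting'', but nothing in it forces the first summand to vanish.

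In fact your argument runs verbatim for $K_+$, which also has an Iwahori factorization, and there injectivity is false. A depth-zero supercuspidal $\pi$ of $GL_n(F)$ with $n\ge 2$ has $\pi^{K_+}\neq 0$ but $\pi_N=0$.

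The missing, genuinely Iwahori-specific input is that $A$ is invertible on all of $V^I$. Here $A$ is a positive multiple of the action of $\operatorname{ch}_{IaI}$. By Iwahori--Matsumoto, $\operatorname{ch}_{IaI}=T_{s_1}\cdots T_{s_\ell}T_\omega$ in $H(G,I)$. Each $T_s$ is a unit because $(T_s-q)(T_s+1)=0$, and $T_\omega$ has length $0$, so it is a unit too. So the real obstacle is this invertibility, not the ``uniform bound on $N$-stabilisation'' you single out. Alternatively, simply cite the isomorphism, as the paper itself does in Section 4 (via Casselman). With that citation the rest of your argument goes through.

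\textbf{Smaller point: the fallback for $\pi_N\neq 0$ is circular.} Jacquet's subrepresentation theorem only embeds $\pi$ into some $\operatorname{Ind}_P^G\sigma$ with $\sigma$ supercuspidal. Concluding that $P$ is (associate to) $B$ is precisely uniqueness of cuspidal support. That in turn rests on supercuspidal subquotients splitting off, e.g.\ projectivity of supercuspidals among representations with a fixed central character. Cite that directly: if $\pi_N=0$, then $\pi$ would split off as a subrepresentation of $X$. This contradicts Frobenius reciprocity, since $\operatorname{Hom}_G(\pi,X)$ is computed by $\operatorname{Hom}_T(\pi_N,\cdot)=0$.

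\textbf{Smaller point: generality.} The statement is for an arbitrary connected reductive $G$. You assume a Borel $B=TN$ over $k$ and $T_0=T\cap K$ with $K$ hyperspecial. In general one uses a minimal parabolic $P_0=M_0N_0$ and $M_0\cap I$, and the same argument works. This is harmless for the paper, which only needs $GL_n$.
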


Combining Theorem \ref{thm:borel-casselman} and Lemma \ref{lem:steinberg-iwahori}, we obtain the following immediate corollary.

\begin{corollary}
\label{cor:steinberg-principal-series}
Assume that $G$ satisfies the hypotheses of Lemma \ref{lem:cuspidal-support}. Let $(\pi,V)$ be an irreducible representation of $G$. If $V$ contains $\pi_\lambda$, then $\pi$ must be a subquotient of an unramified principal series.
\end{corollary}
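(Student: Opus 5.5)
We want to prove Corollary~\ref{cor:steinberg-principal-series}: if $V^{K_+}$ contains $\pi_\lambda$, then $\pi$ is a subquotient of an unramified principal series. The plan is to chain Lemma~\ref{lem:steinberg-iwahori} with Theorem~\ref{thm:borel-casselman}.

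First, we check that the hypotheses of Lemma~\ref{lem:steinberg-iwahori} hold. The corollary's phrase ``$V$ contains $\pi_\lambda$'' should be read as ``$V^{K_+}$ contains $\pi_\lambda$ as a $K/K_+$-subrepresentation''. This is the only sense in which a $K/K_+$-module can sit inside $V$, since $\pi_\lambda$ is inflated from $K/K_+$ and so is fixed by $K_+$.

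In particular $V^{K_+}\neq 0$, so $\pi$ has depth zero. We also note that $\pi_\lambda$ lies in the principal Harish-Chandra series by definition: it is a constituent of $\operatorname{Ind}_{I/K_+}^{K/K_+}\mathbf{1}$. Lemma~\ref{lem:steinberg-iwahori} then applies and yields $V^I\neq 0$.

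Next, we apply Theorem~\ref{thm:borel-casselman} with $k=F$. Its setting, a connected reductive group with an Iwahori subgroup $I$, contains ours. The one compatibility point is that the Iwahori subgroup in Lemma~\ref{lem:steinberg-iwahori} is the one attached to a chamber whose closure contains the vertex of $K$. All Iwahori subgroups of $G$ are $G$-conjugate, and the condition $\pi^I\neq 0$ is invariant under conjugation, so the choice of $I$ is harmless. Since $\pi$ is irreducible and smooth, the implication (1)$\Rightarrow$(2) gives that $\pi$ is a subquotient of an unramified principal series.

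The main obstacle lies inside Lemma~\ref{lem:steinberg-iwahori}, which I am assuming. The delicate step there is the passage from ``$\pi_\lambda$ has cuspidal support $(T,\mathbf{1})$'' to ``the minimal $K$-type of $V$ is $(c,\mathbf{1})$ for a chamber $c$''. Since this is stated earlier in the paper, the corollary itself reduces to the two-line composition above.
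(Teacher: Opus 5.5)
Your proposal is correct and is exactly the paper's argument: the corollary is stated as the immediate combination of Lemma~\ref{lem:steinberg-iwahori}, which gives $V^I\neq 0$, with the implication (1)$\Rightarrow$(2) of Theorem~\ref{thm:borel-casselman}. Your extra remarks are sound clarifications that the paper leaves implicit: reading ``$V$ contains $\pi_\lambda$'' as a statement about $V^{K_+}$, which forces depth zero, and noting that the choice of Iwahori subgroup does not matter up to conjugacy.
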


\begin{lemma}
Suppose $G$ is an unramified reductive $p$-adic group, and $(\pi,V)$ is an Iwahori-spherical irreducible representation of $G$. And fix an Iwahori subgroup $I$ which is contained in K. Then every irreducible $K/K_{+}$-subrepresentation $W$ of $V^{K_{+}}$ lies in the principal Harish-Chandra series  $\operatorname{Ind}_{I/K_{+}}^{K/K_{+}}(1)$ (i.e., parabolic induction from the trivial representation of a quasi-split torus).
\end{lemma}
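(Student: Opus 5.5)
We plan to argue by contradiction, combining Lemma \ref{lem:cuspidal-support} with the Borel–Casselman theorem and the uniqueness of cuspidal support (Bernstein decomposition). Let $W$ be an irreducible $K/K_+$-subrepresentation of $V^{K_+}$. Lemma \ref{lem:cuspidal-support} gives a facet $c$ whose closure contains the vertex $x_K$ of $K$, and a cuspidal representation $\rho$ of $G_c/G_{c+}$ such that $W \hookrightarrow \operatorname{Ind}_{G_c/K_+}^{K/K_+}\rho$ and $[c,\rho]$ is an unrefined minimal $K$-type of $V$. It therefore suffices to show that we can take $c$ to be a chamber and $\rho$ to be trivial. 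In that case $G_c = I'$ is an Iwahori subgroup in $K$, conjugate under $K$ to $I$, and $W$ embeds in $\operatorname{Ind}_{I/K_+}^{K/K_+}(\mathbf 1)$.

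Since $V$ is Iwahori-spherical, $V^{I_+}\supseteq V^I \neq 0$. Hence $(C,\mathbf 1)$ is also an unrefined minimal $K$-type of $V$, where $C$ is the alcove of $I$ and $I/I_+$ is a torus over $\mathbb F_q$. By Moy–Prasad (Theorem 5.2 of \cite{MoyPrasad1994}), any two unrefined minimal $K$-types of the same irreducible $V$ are associates. Associated depth-zero types $(c,\rho)$ and $(C,\mathbf 1)$ have $G$-conjugate pairs (Levi of the reductive quotient, cuspidal representation). The Levi attached to $(C,\mathbf 1)$ is a maximal torus with the trivial character. Therefore $G_c/G_{c+}$ has a cuspidal representation associated with a torus, which forces $G_c/G_{c+}$ to be a torus itself, so $c$ is a chamber. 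Moreover $\rho$ is then conjugate to the trivial character, hence trivial.

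Finally, $c$ is a chamber whose closure contains $x_K$, so $G_c$ is an Iwahori subgroup contained in $K$. All such Iwahori subgroups are $K$-conjugate to $I$, since the Borel subgroups of $K/K_+$ are conjugate. Therefore $\operatorname{Ind}_{G_c/K_+}^{K/K_+}\mathbf 1 \cong \operatorname{Ind}_{I/K_+}^{K/K_+}\mathbf 1$, which gives the claim.

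The main obstacle is the associativity step: making precise that the conjugacy of minimal $K$-types matches the cuspidal datum of $G_c/G_{c+}$ with the torus datum. If the Moy–Prasad associate-class statement is not directly usable, I would fall back on a more hands-on argument. Take the $\rho$-isotypic part of $V^{G_{c+}}$, which is nonzero. By Moy–Prasad's Jacquet-module comparison, $r_P(V)$ (with $P$ the parabolic attached to $c$) then contains a depth-zero cuspidal supercuspidal-type constituent built from $\rho$. By Borel–Casselman (Theorem \ref{thm:borel-casselman}), $V$ is a subquotient of an unramified principal series, so every Jacquet module of $V$ has a cuspidal support made of unramified characters of the torus. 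This is incompatible with a non-torus cuspidal $\rho$, by uniqueness of the cuspidal support.
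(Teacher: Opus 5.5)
Your proposal is correct and follows essentially the same route as the paper: $V^I\neq 0$ gives the unrefined minimal $K$-type (chamber of $I$, trivial); Moy--Prasad associativity then forces the type $[c,\rho]$ that Lemma~\ref{lem:cuspidal-support} attaches to $W$ to be (chamber, trivial); and $K$-conjugacy of the Borel subgroups of $K/K_+$ finishes the proof. Your explanation of why association forces $c$ to be a chamber and $\rho$ to be trivial, and your Jacquet-module/cuspidal-support fallback, only spell out a step the paper asserts in one line.
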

\begin{proof}
For a fixed Iwahori subgroup $I$ of $G$ , By \cite[Theorem 8.4.10(2)]{KalethaPrasad2023}, we know that $I/I_{+}$ is the minimal Levi of $K/K_{+}$. Since every finite Lie group is quasi-split, $I/I_{+}$ is a torus, and $I/K_{+}$ is the Borel containing this torus.

Since $V$ is Iwahori-spherical, $V^I \neq 0$; hence the unrefined minimal $K$-type of $V$ is (the chamber corresponding to $I$, trivial representation). Since two unrefined minimal K type is associated, every unrefined minimal K type of V has the form (chamber, trivial representation).

By the proof of Lemma \ref{lem:cuspidal-support}, we know that for every irreducible component $W$ of $V^{K_{+}}$ , there exists a chamber c (assume the Iwahori subgroup associated to c is I'), such that W  is a subrepresentation of $\operatorname{Ind}_{I'/K_{+}}^{K/K_{+}}(1)$. It is well known that every Borel subgroups of a finite Lie group are rational-conjugated. Hence  $\operatorname{Ind}_{I'/K_{+}}^{K/K_{+}}(1)$ is isomorphic to  $\operatorname{Ind}_{I/K_{+}}^{K/K_{+}}(1)$. Hence the lemma holds.
\end{proof}
\section{Actions of the Hecke  algebras}
Two Hecke algebras occur in the previous sections, so we will compare these two algebras in this section. To simplify terminology, let $H=\operatorname{End}_{K/K_{+}}(\operatorname{Ind}_{I/K_{+}}^{K/K_{+}}(1))$, and let $H(K,I)$ be the finite Hecke algebra of the $p$-adic group (i.e., $I$-bi-invariant functions which are supported on $K$).

\begin{lemma}
There is an isomorphism $\phi$ between $H$ and $H(K,I)$ satisfying $\phi(T_{s_\alpha})=\operatorname{ch}_{Is_{\alpha}I}$ for every simple root $\alpha$.
\end{lemma}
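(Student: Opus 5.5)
The plan is to factor $\phi$ through the Hecke algebra $\mathcal{H}(K/K_+,B)$ of $B$-bi-invariant functions on the finite group, writing $B:=I/K_+$. The two halves are a pull-back argument along $K\to K/K_+$ and the standard identification of endomorphisms of $\mathbb{C}[G/B]$ with bi-invariant functions.

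\textbf{Step 1 (pull-back).} Since $K_+\subset I$, the quotient map $p:K\to K/K_+$ gives a bijection $K/I\cong (K/K_+)/B$ and a bijection of double cosets $I\backslash K/I\cong B\backslash (K/K_+)/B$. Composition with $p$ therefore identifies $I$-bi-invariant functions on $K$ with $B$-bi-invariant functions on $K/K_+$. I normalize the Haar measure on $K$ by $\operatorname{vol}(I)=1$. On the finite group I use the convolution
$$(f_1*f_2)(x)=\frac{1}{|B|}\sum_{y} f_1(y)f_2(y^{-1}x),$$
which is the pushforward of the Haar measure on $K$. With these normalizations $f\mapsto f\circ p$ is an algebra isomorphism $\mathcal{H}(K/K_+,B)\cong H(K,I)$. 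The finite Bruhat decomposition $K/K_+=\bigsqcup_{w\in W}BwB$ pulls back to $K=\bigsqcup_{w\in W} IwI$. Hence $\operatorname{ch}_{Bs_\alpha B}\circ p=\operatorname{ch}_{Is_\alpha I}$.

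\textbf{Step 2 (finite group).} Write $G=K/K_+$ and identify $\operatorname{Ind}_{B}^{G}(\mathbf{1})$ with $\mathbb{C}[G/B]$.
\begin{itemize}
\item Every $G$-endomorphism of $\mathbb{C}[G/B]$ is determined by the image of the coset $B$, which is a $B$-invariant vector. Such a vector is a $B$-bi-invariant function $f$ on $G$.
\item The endomorphism attached to $f$ is $xB\mapsto \sum_{yB} f(x^{-1}y)\,yB$, so it is right convolution by $f$.
\item By the definition of $T_n$ in Section~2.5, $T_{s_\alpha}$ is exactly the operator with kernel $f=\operatorname{ch}_{Bs_\alpha B}$.
\item Composing kernels gives $T_{f_1}\circ T_{f_2}=T_{f_2*f_1}$. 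So $\operatorname{End}_G(\mathbb{C}[G/B])\cong \mathcal{H}(G,B)^{\mathrm{opp}}$, and this map sends $T_{s_\alpha}$ to $\operatorname{ch}_{Bs_\alpha B}$.
\end{itemize}

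\textbf{Step 3 (removing the opposite).} The main point of care is the ``opp''. I would use the anti-automorphism $\iota$ of $\mathcal{H}(G,B)$ given by $f\mapsto f^\vee$, where $f^\vee(g)=f(g^{-1})$. It sends $\operatorname{ch}_{BwB}$ to $\operatorname{ch}_{Bw^{-1}B}$, so it fixes each $\operatorname{ch}_{Bs_\alpha B}$. Composing with $\iota$ turns the anti-isomorphism of Step 2 into a genuine isomorphism $H\cong\mathcal{H}(G,B)$ that still sends $T_{s_\alpha}\mapsto \operatorname{ch}_{Bs_\alpha B}$.

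Alternatively, one can use the presentation of Theorem~2.16: the algebra $H(K,I)$ has the same quadratic and braid relations for the generators $\operatorname{ch}_{Is_\alpha I}$, with $q_s=[Is_\alpha I:I]=q$. Either argument shows that $T_{s_\alpha}\mapsto\operatorname{ch}_{Is_\alpha I}$ extends to an algebra homomorphism. It is bijective because it matches the bases $\{T_w\}$ and $\{\operatorname{ch}_{IwI}\}$.

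\textbf{Main obstacle.} I expect the hardest part to be keeping the measure normalizations and the left/right (opposite) conventions consistent. The content of the identification itself is routine. The normalizations must be fixed so that the quadratic relation $(T_s-q)(T_s+1)=0$ holds on both sides with the same $q$.
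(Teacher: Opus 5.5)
Your proof is correct. Your main route differs from the paper's, whose entire proof is the sentence that the isomorphism ``follows directly from the presentations of the two algebras''; that is essentially your ``alternatively'' paragraph.

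The paper's argument takes for granted that $H(K,I)$ has the Iwahori--Matsumoto presentation with generators $\operatorname{ch}_{Is_\alpha I}$ and parameter $[Is_\alpha I:I]=q$. One would normally justify this by transporting the finite-group structure along $K\to K/K_+$, exactly as in your Step 1. The paper also passes over the fact that $H$ is defined as $\operatorname{End}^{\mathrm{opp}}$ in Section 2.5 but as $\operatorname{End}$ in the appendix. That discrepancy is harmless in a presentation argument, because the braid and quadratic relations are unchanged when products are reversed.

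Your route instead builds an explicit, canonical isomorphism. You pull back along $p\colon K\to K/K_+$ with $\operatorname{vol}(I)=1$ and then use the kernel realization $T_f\colon xB\mapsto\sum_{yB}f(x^{-1}y)\,yB$. The presentation of $H(K,I)$ then comes for free from the finite group, and you handle the opposite explicitly through $f\mapsto f^\vee$.

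A side benefit is that your Step 2 is exactly what the next lemma (the two actions on $V^I$ coincide) needs. The paper lets $H$ act by $h*\phi=\phi\circ h$, which is a left action of $\operatorname{End}^{\mathrm{opp}}$. So the identification relevant there is the untwisted one, $T_f\mapsto f\circ p$, under which $T_f$ and $f\circ p$ both act on $v\in V^I$ by $\sum_{yB}f(y)\,yv$. Your $\iota$-twist is needed only to match the literal definition $H=\operatorname{End}$, and it changes nothing on the generators $T_{s_\alpha}$.
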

\begin{proof}
This follows directly from the presentations of the two algebras.
\end{proof}

\begin{lemma}
Suppose $G$ is an unramified reductive $p$-adic group with hyperspecial subgroup $K$ and Iwahori subgroup $I$. Let $(\pi,V)$ be an irreducible Iwahori-spherical representation of $G$. As a representation of the $p$-adic group, $V$ has an action coming from $H(K,I)$. As a representation of the finite Lie group $K/K_{+}$, after taking $I/K_{+}$-fixed vectors, $V^I$ is also an $H$-module. Then the two actions coincide.
\end{lemma}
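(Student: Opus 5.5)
We must show that the $H(K,I)$-action on $V^I$, given by convolution $\operatorname{ch}_{IsI}\cdot v=\int_{IsI}\pi(g)v\,dg$, agrees under the isomorphism $\phi$ of the previous lemma with the $H$-action on $(V^{K_+})^{I/K_+}=V^I$. The $H$-action is described in the lemma of Section 2 by $T_{s_\alpha}v=\sum_{u\in U_\alpha}u s_\alpha v$. Both algebras are generated by the elements attached to simple reflections of the finite Weyl group, and $\phi$ matches these generators. It therefore suffices to compare the two actions on the generators $T_{s_\alpha}$ and $\operatorname{ch}_{Is_\alpha I}$ for each simple root $\alpha$ of $K/K_+$.

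\textbf{Step 1: normalization.} Normalize Haar measure so that $\operatorname{vol}(I)=1$; this is the normalization under which $\operatorname{ch}_{IsI}$ satisfies the quadratic relation $(T-q)(T+1)=0$ appearing in the presentation. For $v\in V^I$ we then have
\[
\operatorname{ch}_{Is_\alpha I}\cdot v=\sum_{g\in Is_\alpha I/I}\pi(g)v .
\]

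\textbf{Step 2: pass to the finite group.} Since $K_+\subset I$ and $K_+$ is normal in $K$, the map $K\to K/K_+$ induces a bijection
\[
Is_\alpha I/I\;\longrightarrow\;\bar B\bar s_\alpha\bar B/\bar B,\qquad \bar B=I/K_+ .
\]
Here $\bar s_\alpha$ is the image of $s_\alpha$, viewed as a permutation matrix in $K$.

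Both $\pi(g)v$ and the finite action of the image of $g$ on $v\in V^{K_+}$ depend only on $gK_+$, and they agree by the definition of the $K/K_+$-action on $V^{K_+}$. Hence the sum in Step 1 equals
\[
\sum_{\bar g\in\bar B\bar s_\alpha\bar B/\bar B}\bar g v ,
\]
which is $T_{s_\alpha}v$ by the Section 2 lemma.

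\textbf{Step 3: the Section 2 formula.} That lemma is asserted in the paper but its proof is deferred to this appendix, so it should be proved here. Under Frobenius reciprocity, $v\in V^{\bar B}$ corresponds to $f_v\colon \mathbb{C}[\bar G/\bar B]\to V$ with $f_v(x\bar B)=xv$. The operator $T_s$ sends $\bar B$ to $\sum_{y\bar B\subset \bar B s\bar B}y\bar B$. With the action $h*f=f\circ h$, the element $T_s*v$ is the image of $\bar B$ under $f_v\circ T_s$, which is $\sum_{y\in \bar Bs\bar B/\bar B}yv$. The coset representatives $u s_\alpha$, $u\in U_\alpha$, then give the stated formula.

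\textbf{Main obstacle.} The technical point is the bijection in Step 2, namely $Is_\alpha I/I\cong \bar B\bar s_\alpha\bar B/\bar B$ with $q$ elements. To prove it, use the Iwahori factorization $I=(I\cap N^-)\,T_0\,(I\cap N)$, where $I\cap N^-$ is contained in $K_+$ modulo the lower-triangular part. From this one gets
\[
Is_\alpha I=U_\alpha(\mathfrak o)\,s_\alpha I ,
\]
with representatives $u s_\alpha$ for $u\in U_\alpha(\mathfrak o)/U_\alpha(\mathfrak p)$. These reduce bijectively to $U_\alpha(\mathbb F_q)\bar s_\alpha$.

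A second bookkeeping point is the opposite-algebra convention: one must check that composition order in $H=\operatorname{End}(\operatorname{Ind}1)^{\mathrm{opp}}$ matches convolution order. This is automatic once the two actions agree on generators and each is a genuine (left) module structure.
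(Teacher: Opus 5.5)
Your proposal is correct and follows the paper's proof. Both normalize $\operatorname{vol}(I)=1$, reduce to comparing the generators $T_{s_\alpha}$ and $\operatorname{ch}_{Is_\alpha I}$, compute the finite-group side by Frobenius reciprocity with $f_v(g\bar B)=gv$, and identify both sides with $\sum_{u\in Is_\alpha I/I}uv$. The coset bijection you single out as the main obstacle is used implicitly by the paper. It already follows from the reason you give in Step 2: since $K_+\subset I$, $I$ is the full preimage of $\bar B$ in $K$. So your Iwahori-factorization argument is a valid but unnecessary second proof of that bijection.
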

\begin{proof}
Here we emphasize that the measure on $G$ is normalized such that the volume of $I$ is 1. By Lemma A.5, every irreducible component of $V^{K_{+}}$ is a subrepresentation of $\operatorname{Ind}_{I/K_{+}}^{K/K_{+}}(1)$.

In order to show that these two actions coincide, we just need to check the generators $T_{s_\alpha}$ and $\operatorname{ch}_{Is_\alpha I}$. More concretely, we need to show the following equality: for every element $v \in V^I$
$$
T_{s_\alpha} v = \operatorname{ch}_{Is_\alpha I} v.
$$
Assume that the representatives of $Is_{\alpha}I/I$ are $\{u \mid u \in \Lambda\}$.
Then
By the discussion of the action of $T_{s_\alpha}$ , and after applying Frobenius reciprocity, we have:
$$
\operatorname{Hom}_{K/K_{+}}(\operatorname{Ind}_{I/K_{+}}^{K/K_{+}}(1),V^{K_{+}})=\operatorname{Hom}_{I/K_{+}}(1,V^{K_{+}})=V^I
$$
For an element $v \in V^I$, $v$ corresponding to the function $f_v$, where $f_v(gI/K_{+})=gv$, hence 
$$
T_{s_\alpha}v=f_v \circ T_{s_\alpha}(I/K_{+})
$$
$$
 = f_v\bigl(\sum_{u \in  \Lambda}uI/K_{+}\bigr).
$$
$$
= \sum_{u \in \Lambda} u v,
$$
$$
\text{RHS} = \int_{Is_{\alpha}I} x v \, dx = \sum_{u\in \Lambda} u v.
$$
Hence LHS = RHS.
\end{proof}

\end{document}